\newcommand{\sg}{\textnormal{sg}}
\newtheorem{theorem}{Theorem}
\newtheorem{lemma}[theorem]{Lemma}
\newtheorem{corollary}[theorem]{Corollary}
\newtheorem{proposition}[theorem]{Proposition}
\theoremstyle{remark}
\theoremstyle{definition}
\newtheorem{definition}[theorem]{Definition}
\numberwithin{theorem}{section} \numberwithin{equation}{section}
\numberwithin{example}{section}
\title{On three third order mock theta functions and~Hecke-type~double sums}
\author{Eric Mortenson}
\begin{document}

\date{18 January 2012}

\subjclass[2000]{11B65, 11F11, 11F27}

\keywords{Hecke-type double sums, Appell-Lerch sums, mock theta functions, indefinite theta series}

\begin{abstract}
We obtain four Hecke-type double sums for three of Ramanujan's third order mock theta functions.  We discuss how these four are related to the new mock theta functions of Andrews' work on $q$-orthogonal polynomials and Bringmann, Hikami, and Lovejoy's work on unified Witten-Reshetikhin-Turaev invariants of certain Seifert manifolds.  We then prove identities between these new mock theta functions by first expressing them in terms of the universal mock theta function.
\end{abstract}

\address{Department of Mathematics, The University of Queensland,
Brisbane, Australia 4072}
\email{mort@maths.uq.edu.au}
\maketitle
\setcounter{section}{-1}

\section{Notation}\label{section:notation}

 Let $q$ be a nonzero complex number with $|q|<1$ and define $\mathbb{C}^*:=\mathbb{C}-\{0\}$.  We recall some basics:
 \begin{gather*}
(x)_n=(x;q)_n:=\prod_{i=0}^{n-1}(1-q^ix), \ \ (x)_{\infty}=(x;q)_{\infty}:=\prod_{i\ge 0}(1-q^ix),\\
 j(x;q):=(x)_{\infty}(q/x)_{\infty}(q)_{\infty}=\sum_{n}(-1)^nq^{\binom{n}{2}}x^n,\\
 {\text{and }}\ \ j(x_1,x_2,\dots,x_n;q):=j(x_1;q)j(x_2;q)\cdots j(x_n;q).
\end{gather*}
where in the last line the equivalence of product and sum follows from Jacobi's triple product identity.  We also keep in mind the fact that $j(q^n,q)=0$ for $n\in \mathbb{Z}.$  The following are special cases of the above definition.  Here $a$ and $m$ are integers with $m$ positive.  Define
\begin{gather*}
J_{a,m}:=j(q^a;q^m), \ \ J_m:=J_{m,3m}=\prod_{i\ge 1}(1-q^{mi}), \ {\text{and }}\overline{J}_{a,m}:=j(-q^a;q^m).
\end{gather*}
\section{Introduction}\label{section:intro}
Historically, mock theta functions have many forms of representation:  Eulerian forms, Hecke-type double sums, Appell-Lerch sums and Fourier coefficients of meromorphic Jacobi forms.  Recently they have been cast as holomorphic parts of weak Maass forms.  With the exception of a Hecke-type double sum for the third order mock theta function $\psi(q)$ found in Andrews' recent work on $q$-orthogonal polynomials \cite{A2}, Hecke-type double sum representations for third order mock theta functions are unknown.  Here we obtain Hecke-type sums for the third order functions $1+2\psi(q)$, $\nu(-q)$, $\phi(q)$, and $\nu(q)$.   Where these representations fit with respect to Zwegers' modularity theory \cite{Zw} is also addressed.  In the process, this leads us to two new mock theta functions found in Andrews' work \cite{A2} on $q$-orthogonal polynomials and to the two new mock theta functions found in Bringmann, Hikami, and Lovejoy's work \cite{BHL} on unified Witten-Reshetikhin-Turaev invariants of certain Seifert manifolds.  We obtain expressions for the new mock theta functions of \cite{A2,BHL} in terms of the universal mock theta function
\begin{equation}
g(x,q):=x^{-1}\Big ( -1 +\sum_{n=0}^{\infty}\frac{q^{n^2}}{(x)_{n+1}(q/x)_{n}} \Big ),
\end{equation}
and use this information to prove identities between the new mock theta functions.

All of the results in this paper can be shown using Theorems $1.4$ and $1.6$ of \cite{HM}, but for variety and brevity we will use other techniques on occasion.

We first recall some notation which will allow us to state Theorems $1.4$ and $1.6$ of \cite{HM}.  We emphasize that it is Theorem $1.6$ of \cite{HM} which guides us to representing the mock theta functions of \cite{A2,BHL} in terms of $g(x,q)$.  We will use the following definition of an Appell-Lerch sum:
\begin{definition}  \label{definition:mdef} Let $x,z\in\mathbb{C}^*$ with neither $z$ nor $xz$ an integral power of $q$. Then
\begin{equation}
m(x,q,z):=\frac{1}{j(z;q)}\sum_{r=-\infty}^{\infty}\frac{(-1)^rq^{\binom{r}{2}}z^r}{1-q^{r-1}xz}.\label{equation:mdef-eq}
\end{equation}
\end{definition}
\noindent In \cite[Proposition $4.2$]{HM}, we showed that it is an easy consequence of \cite[Theorem $2.2$]{H1} that
\begin{equation}
g(x,q)=-x^{-1}m(q^2x^{-3},q^3,x^2)-x^{-2}m(qx^{-3},q^3,x^2).\label{equation:g-to-m}
\end{equation}

\noindent We recall the following notation for a special type of Hecke-type double sum:  
\begin{definition} \label{definition:fabc-def}  Let $x,y\in\mathbb{C}^*$ and define $\sg (r):=1$ for $r\ge 0$ and $\sg(r):=-1$ for $r<0$. Then
\begin{equation*}
f_{a,b,c}(x,y,q):=\sum_{\substack{\sg (r)=\sg(s)}} \sg(r)(-1)^{r+s}x^ry^sq^{a\binom{r}{2}+brs+c\binom{s}{2}}.\label{definition:f-def}\\
\end{equation*}
\end{definition}

\noindent We also define the following expression involving Appell-Lerch sums:
\begin{align}
g_{a,b,c}&(x,y,q,z_1,z_0)\label{equation:mdef-2}\\
&:=\sum_{t=0}^{a-1}(-y)^tq^{c\binom{t}{2}}j(q^{bt}x;q^a)m\Big (-q^{a\binom{b+1}{2}-c\binom{a+1}{2}-t(b^2-ac)}\frac{(-y)^a}{(-x)^b},q^{a(b^2-ac)},z_0\Big )\notag\\
&\ \ \ \ \ +\sum_{t=0}^{c-1}(-x)^tq^{a\binom{t}{2}}j(q^{bt}y;q^c)m\Big (-q^{c\binom{b+1}{2}-a\binom{c+1}{2}-t(b^2-ac)}\frac{(-x)^c}{(-y)^b},q^{c(b^2-ac)},z_1\Big ).\notag
\end{align}
We have
\begin{theorem}[\cite{HM}, Theorem $1.6$]   \label{theorem:masterFnp} Let $n$ and $p$ be positive integers with $(n,p)=1$.  For generic $x,y\in \mathbb{C}^*$
\begin{align*}
f_{n,n+p,n}(x,y,q)=g_{n,n+p,n}(x,y,q,-1,-1)+\theta_{n,p}(x,y,q),
\end{align*}
where
\begin{align*}
&\theta_{n,p}(x,y,q):=\sum_{r^*=0}^{p-1}\sum_{s^*=0}^{p-1}q^{n\binom{r-(n-1)/2}{2}+(n+p)\big (r-(n-1)/2\big )\big (s+(n+1)/2\big )+n\binom{s+(n+1)/2}{2}}(-x)^{r-(n-1)/2}\\
& 
 \cdot \frac{(-y)^{s+(n+1)/2}J_{p^2(2n+p)}^3 j(-q^{np(s-r)}x^n/y^n;q^{np^2})j(q^{p(2n+p)(r+s)+p(n+p)}x^py^p;q^{p^2(2n+p)})}{\overline{J}_{0,np(2n+p)}j(q^{p(2n+p)r+p(n+p)/2}(-y)^{n+p}/(-x)^n,q^{p(2n+p)s+p(n+p)/2}(-x)^{n+p}/(-y)^n;q^{p^2(2n+p)})}.\notag
\end{align*}
Here $r:=r^*+\{(n-1)/2\}$ and $s:=s^*+\{ (n-1)/2\}$, with $0\le \{ \alpha\}<1$ denoting the fractional part of $\alpha$. 
\end{theorem}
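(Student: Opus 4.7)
The plan is to recast the indefinite double sum $f_{n,n+p,n}(x,y,q)$ as a sum of bilateral Appell-Lerch sums plus an explicit theta-function correction. Starting from the definition of $f_{a,b,c}$ with $a=c=n$ and $b=n+p$, I would split the summation region $\{\sg(r)=\sg(s)\}$ into the positive cone $\{r,s\ge 0\}$ and negative cone $\{r,s<0\}$, which are related by the reflection $(r,s)\mapsto(-1-r,-1-s)$. Within each cone, reduce $s$ modulo $n$ by writing $s=nu+t$ with $0\le t<n$; the exponent $n\binom{r}{2}+(n+p)rs+n\binom{s}{2}$ then separates into a piece depending only on $(r,t)$ and a quadratic piece in the outer index $u$. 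The inner $r$-sum at fixed $t$ produces, via Jacobi's triple product, the theta factor $j(q^{bt}x;q^n)$ that appears in the definition of $g_{n,n+p,n}$, while the outer $u$-sum is reorganized (up to sign and shift) into an Appell-Lerch sum at base $q^{n(b^2-ac)}=q^{np(2n+p)}$. This recovers the first sum in $g_{n,n+p,n}$; the symmetric splitting, reducing $r$ modulo $n$ instead, contributes the second sum.

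The subtle point is that the Appell-Lerch sums produced this way are one-sided rather than bilateral, due to the cone restriction $u\ge 0$. Completing each to a genuine bilateral $m$-sum introduces, at each residue $t$ and each cone, a theta-quotient correction coming from Jacobi's triple product. Collecting these residual pieces across all $t$, both cones, and both splittings, and then reindexing by residues $r^*,s^*\pmod p$ with the shifts $\{(n-1)/2\}$ (to keep exponents integral for both parities of $n$), one obtains the claimed $p^2$-fold double sum. In this compact form the numerator theta functions $j(-q^{np(s-r)}x^n/y^n;q^{np^2})$ and $j(q^{p(2n+p)(r+s)+p(n+p)}x^py^p;q^{p^2(2n+p)})$ encode the pairing of the two cones' contributions, while the denominator theta functions track the poles of the one-sided-to-bilateral completion.

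The main obstacle is the precise identification of $\theta_{n,p}$ in its stated compact form. This requires careful use of the three-term relation for $m(x,q,z)$ in the third argument to normalize the $z$-parameters arising naturally from each residue class to the chosen value $z_0=z_1=-1$ in $g_{n,n+p,n}(x,y,q,-1,-1)$, together with repeated applications of Jacobi's triple product to collapse sums of ratios of theta functions into the single prefactor $J_{p^2(2n+p)}^3/\overline{J}_{0,np(2n+p)}$. The coprimality hypothesis $(n,p)=1$ enters here to guarantee that the $p^2$ summands are supported on distinct cosets and that the denominator theta functions avoid trivial zeros, while the genericity of $x,y$ is used for the same reason.
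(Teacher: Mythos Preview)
This theorem is not proved in the present paper; it is quoted from \cite{HM} (as Theorem~1.6 there) and used as a black box, so there is no in-paper proof against which to compare your proposal.

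On its own merits, your sketch has a concrete gap at the first step. You assert that after writing $s=nu+t$ inside the cone $\{r,s\ge 0\}$, ``the inner $r$-sum at fixed $t$ produces, via Jacobi's triple product, the theta factor $j(q^{bt}x;q^n)$.'' But that $r$-sum runs only over $r\ge 0$, whereas Jacobi's identity requires a bilateral sum; and the negative cone does not supply the missing half at the same $(u,t)$, since under $(r,s)\mapsto(-1-r,-1-s)$ the residue of $s$ modulo $n$ changes from $t$ to $n-1-t$. Thus the one-sidedness you correctly flag for the Appell--Lerch piece already contaminates the would-be theta factors, and the clean $j\cdot m$ decomposition does not emerge from the mechanism you describe. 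In \cite{HM} the coefficients $j(q^{bt}x;q^a)$ arise differently: one first establishes an identity $f_{a,b,c}(x,y,q)=g_{a,b,c}(x,y,q,z_1,z_0)$ with specific $x,y$-dependent choices of $z_0,z_1$ for which the theta correction vanishes, and only then uses \eqref{equation:m-change-z} to normalize all $z$'s to $-1$, which is what generates $\theta_{n,p}$. The second half of your plan---normalizing the $z$-parameters and collapsing the resulting theta quotients, with $(n,p)=1$ guaranteeing the $p^2$ terms sit on distinct cosets---is in line with that strategy; it is the production of the $j\cdot m$ terms that needs a different argument than ``sum a half-line and apply Jacobi.''
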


\noindent We also have
\begin{theorem} [\cite{HM}, Theorem $1.4$] \label{theorem:main-acdivb}Let $a,b,$ and $c$ be positive integers with $ac<b^2$ and $b$ divisible by $a$ and $c$. Then
\begin{align*}
& f_{a,b,c}(x,y,q)=h_{a,b,c}(x,y,q,-1,-1)-\frac{1}{\overline{J}_{0,b^2/a-c}\overline{J}_{0,b^2/c-a}}\cdot \theta_{a,b,c}(x,y,q),
\end{align*}
where 
\begin{align*}
h_{a,b,c}(x,y,q,z_1,z_0):&=j(x;q^a)m\Big( -q^{a\binom{b/a+1}{2}-c}{(-y)}{(-x)^{-b/a}},q^{b^2/a-c},z_1 \Big )\\
& \ \ \ \ \ \ +j(y;q^c)m\Big( -q^{c\binom{b/c+1}{2}-a}{(-x)}{(-y)^{-b/c}},q^{b^2/c-a},z_0 \Big ),
\end{align*}
and
\begin{align*}
&\theta_{a,b,c}(x,y,q):=\sum_{d=0}^{b/c-1}\sum_{e=0}^{b/a-1}\sum_{f=0}^{b/a-1}
q^{(b^2/a-c)\binom{d+1}{2}+(b^2/c-a)\binom{e+f+1}{2}+a\binom{f}{2}}j\big (q^{(b^2/a-c)(d+1)+bf}y;q^{b^2/a}\big ) \\
&\ \ \ \ \ \ \ \ \ \  \cdot (-x)^{f}j\big (q^{b(b^2/(ac)-1)(e+f+1)-(b^2/a-c)(d+1)+b^3(b-a)/(2a^2c)}(-x)^{b/a}y^{-1};q^{(b^2/a)(b^2/(ac)-1)}\big ) \\
& \cdot \frac{J_{b(b^2/(ac)-1)}^3j\big (q^{ (b^2/c-a)(e+1)+(b^2/a-c)(d+1)-c\binom{b/c}{2}-a\binom{b/a}{2}}(-x)^{1-b/a}(-y)^{1-b/c};q^{b(b^2/(ac)-1)}\big )}
{j\big (q^{(b^2/c-a)(e+1)-c\binom{b/c}{2}}(-x)(-y)^{-b/c},q^{(b^2/a-c)(d+1)-a\binom{b/a}{2}}(-x)^{-b/a}(-y);q^{b(b^2/(ac)-1)}\big )}.
\end{align*}
\end{theorem}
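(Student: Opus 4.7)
The plan is to expand the Appell-Lerch sums in $h_{a,b,c}(x,y,q,-1,-1)$ explicitly and match the result against the Hecke-type lattice sum $f_{a,b,c}(x,y,q)$. Using Definition \ref{definition:mdef}, each $m(w, q^N, -1)$ is a single sum over $\Z$; expanding the denominator $1/(1+q^{N(r-1)}w)$ as a geometric series in the appropriate half-plane converts it to a double sum. After further expanding the theta prefactors $j(x;q^a)$ and $j(y;q^c)$ via Jacobi's triple product and reindexing, each of the two pieces of $h_{a,b,c}$ becomes a double sum over a half-space of $\Z^2$.

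The divisibility hypotheses $a\mid b$ and $c\mid b$ are precisely what allow the cross term $brs$ in the Hecke exponent $a\binom{r}{2}+brs+c\binom{s}{2}$ to be recovered after completing the square: the cross term produced by combining the geometric-series index with the Jacobi-triple-product index has coefficient exactly $b$ only when $b/a,b/c\in\Z$, which also ensures that $b^2/a-c$ and $b^2/c-a$ (the bases of the Appell-Lerch sums in $h_{a,b,c}$) are positive integers. The two double sums coming from the two pieces of $h_{a,b,c}$ together cover all of $\Z^2$. The restriction $\sg(r)=\sg(s)$ defining $f_{a,b,c}$ gives the main identity, and the complementary mixed-sign quadrants contribute the theta correction. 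The prefactors $\overline{J}_{0,b^2/a-c}$ and $\overline{J}_{0,b^2/c-a}$ in the denominator arise from the factors $j(-1;q^{b^2/a-c})$ and $j(-1;q^{b^2/c-a})$ appearing in the two Appell-Lerch sums at $z_1=z_0=-1$.

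The principal obstacle is identifying the mixed-sign contribution with the explicit triple theta quotient $\theta_{a,b,c}(x,y,q)$ displayed in the theorem. On each of the two mixed-sign quadrants, the quadratic form $a\binom{r}{2}+brs+c\binom{s}{2}$ remains convergent against $x^ry^s$, so the sum there is a theta series; to rewrite it as the stated triple sum one partitions the lattice into fundamental domains indexed by $0\le d<b/c$ and $0\le e,f<b/a$, and applies Jacobi's triple product three times, in the three modular directions corresponding to the bases $q^{b^2/a}$, $q^{b^2/c}$, and $q^{b(b^2/(ac)-1)}$. The three applications respectively produce the factors $j(\cdots;q^{b^2/a})$, $j(\cdots;q^{(b^2/a)(b^2/(ac)-1)})$, and the nested numerator-over-denominator ratio with modulus $q^{b(b^2/(ac)-1)}$ in $\theta_{a,b,c}$. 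Careful bookkeeping of the linear shifts $-c\binom{b/c}{2}$, $-a\binom{b/a}{2}$, and the auxiliary term $a\binom{f}{2}$, together with the couplings of indices across the three partitions, is the essential combinatorial difficulty. Since the theorem only asserts the identity for $z_1=z_0=-1$, no further reduction on the $z_i$ parameters is needed.
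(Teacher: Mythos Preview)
This theorem is not proved in the present paper at all; it is quoted from \cite{HM} (as Theorem~1.4 there) and used as a black box, most visibly in the $f_{4,4,1}$ specialisation at the start of Section~\ref{section:proofs-four-new-heckes}. So there is no proof here against which to compare your attempt.

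Judged on its own, your proposal is a strategic outline rather than a proof, and it has a structural gap. You say that expanding the denominator of each Appell--Lerch sum as a geometric series, and then expanding the prefactors $j(x;q^a)$, $j(y;q^c)$ by the triple product, yields ``a double sum over a half-space of $\Z^2$''. But those two expansions introduce \emph{three} independent indices (the Appell--Lerch index, the geometric-series index, and the Jacobi index), not two. You never say which pair collapses or why; that collapse is precisely where the divisibility hypotheses $a\mid b$, $c\mid b$ must be used quantitatively, and without carrying it out you cannot match the exponent $a\binom{r}{2}+brs+c\binom{s}{2}$ of $f_{a,b,c}$. Your remark that ``the cross term\ldots has coefficient exactly $b$ only when $b/a,b/c\in\Z$'' is an assertion, not a computation.

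The larger gap is the correction term. Saying that one ``partitions the lattice into fundamental domains indexed by $0\le d<b/c$, $0\le e,f<b/a$ and applies Jacobi's triple product three times'' is a description of the \emph{shape} of $\theta_{a,b,c}$, not a derivation of it. The specific exponents $(b^2/a-c)\binom{d+1}{2}$, $a\binom{f}{2}$, the shift $b^3(b-a)/(2a^2c)$, and the theta-quotient with modulus $q^{b(b^2/(ac)-1)}$ are the entire content of the theorem; they do not fall out of a generic lattice decomposition, and you have not produced them. As written, your argument would equally ``prove'' the identity with any sufficiently complicated theta expression in place of $\theta_{a,b,c}$.
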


Andrews \cite[$(1.10)$]{A2} showed the following for Ramanujan's third order mock theta function $\psi(q)$:
\begin{equation*}
1+\psi(q):=1+\sum_{n=1}^{\infty}\frac{q^{n^2}}{(q;q^2)_{n}}=\frac{1}{(q)_{\infty}}\sum_{n=0}^{\infty}(-1)^nq^{2n^2+n}(1-q^{6n+6})\sum_{j=0}^nq^{-\binom{j+1}{2}}.
\end{equation*}
This motivates
 \begin{theorem} \label{theorem:newheckethm}The third order mock theta functions $\psi(q)$ and $\nu(q)$ have the following Hecke-type double sum representations:
 \begin{align}
 1+2\psi(q)&=\frac{1}{(q)_{\infty}}\cdot \sum_{n= 0}^{\infty}(-1)^nq^{2n^2+n}(1+q^{2n+1})\sum_{j=-n}^nq^{-\binom{j+1}{2}},\label{equation:psi3(q)-hecke}\\
 \nu(-q)&=\frac{1}{(q)_{\infty}}\cdot \sum_{n=0}^{\infty}(-1)^nq^{2n^2+2n}\sum_{j=-n}^nq^{-\binom{j+1}{2}}.\label{equation:nu3(-q)-hecke}
 \end{align}
 \end{theorem}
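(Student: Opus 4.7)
The plan is to prove \eqref{equation:psi3(q)-hecke} starting from Andrews' formula for $1+\psi(q)$ and \eqref{equation:nu3(-q)-hecke} from an Andrews-type analog for $\nu(-q)$. In both cases the key tool is the symmetry $\binom{-j}{2}=\binom{j+1}{2}$, which shows that $q^{-\binom{j+1}{2}}$ is invariant under $j\mapsto -j-1$ and allows one to pass from a one-sided inner sum $S_n:=\sum_{j=0}^n q^{-\binom{j+1}{2}}$ to the two-sided $T_n=\sum_{j=-n}^n q^{-\binom{j+1}{2}}$.

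For \eqref{equation:psi3(q)-hecke}: The right-hand side expands as $(q)_\infty^{-1}\sum_{n\ge 0}(-1)^n q^{2n^2+n}T_n+(q)_\infty^{-1}\sum_{n\ge 0}(-1)^n q^{2n^2+3n+1}T_n$. Since $2n^2+3n+1\mapsto 2n^2+n$ under $n\mapsto -n-1$ while $(-1)^{-n-1}=-(-1)^n$, this second sum converts via the sign-reversing involution $n\mapsto -n-1$ into a Hecke-type double sum over a negative cone. Combined with the Hecke-type reformulation of Andrews' identity $(q)_\infty(1+\psi(q))=\sum_{n\ge 0}(-1)^n q^{2n^2+n}(1-q^{6n+6})S_n$ (obtained via the sign-\emph{preserving} involution $n\mapsto -n-2$ on the $-q^{6n+6}$ half, under which $2n^2+7n+6\mapsto 2n^2+n$), the identity $1+2\psi(q)=2(1+\psi(q))-1$ reduces to the Hecke-type equality
\[
\Big(\sum_{R^+}-\sum_{R^-}\Big)-2\Big(\sum_{R_1^+}-\sum_{R_1^-}\Big)=-(q)_\infty,
\]
where $R^+:=\{n\ge 0,\ -n\le j\le n\}$, $R^-:=\{n\le -1,\ n+1\le j\le -n-1\}$, $R_1^+:=\{n\ge 0,\ 0\le j\le n\}$, $R_1^-:=\{n\le -2,\ 0\le j\le -n-2\}$, all summed with $(-1)^n q^{2n^2+n-\binom{j+1}{2}}$. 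Applying $j\mapsto -j-1$ on the surplus regions $R^\pm\setminus R_1^\pm$ collapses the left-hand side to residual boundary-diagonal contributions $\mp\sum_{n}(-1)^n q^{n(3n+1)/2}$ arising from the uncovered diagonals $j=n$ and $j=-n-1$, plus the stray term $-q$ coming from the isolated point $(-1,0)\in R^-\setminus R_1^-$. These assemble via Euler's pentagonal-number theorem $(q)_\infty=\sum_{k\in\Z}(-1)^k q^{k(3k-1)/2}$ (together with the identification $n(3n+1)/2=k(3k-1)/2$ for $k=-n$) into exactly $-(q)_\infty$.

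For \eqref{equation:nu3(-q)-hecke}: Expanding $T_n=S_n+S_{n-1}$ (itself a consequence of $j\mapsto -j-1$ on the $j<0$ half of $T_n$) in the right-hand side of \eqref{equation:nu3(-q)-hecke} and shifting $n\mapsto n+1$ in the $S_{n-1}$ piece yields $\sum_n(-1)^n q^{2n^2+2n}S_n-\sum_n(-1)^n q^{2n^2+6n+4}S_n$, which rewrites \eqref{equation:nu3(-q)-hecke} term-by-term as the Andrews-analog
\[
(q)_\infty\,\nu(-q)=\sum_{n\ge 0}(-1)^n q^{2n^2+2n}(1-q^{4n+4})S_n.\qquad(\ast)
\]
Thus \eqref{equation:nu3(-q)-hecke} is equivalent to $(\ast)$. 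I would establish $(\ast)$ by starting from $(q)_\infty\,\nu(-q)=(q^2;q^2)_\infty\sum_{n\ge 0}q^{n(n+1)}(q^{2n+3};q^2)_\infty$, expanding $(q^{2n+3};q^2)_\infty$ via Euler's identity $(z;q)_\infty=\sum_k(-z)^k q^{\binom{k}{2}}/(q;q)_k$, interchanging the order of summation, and applying a Bailey-pair identification (or equivalently a direct Jacobi-triple-product manipulation) to recognize the resulting inner partial sum as $S_n$ times the appropriate theta factor.

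The main obstacle is the delicate bookkeeping of the three involutions---$n\mapsto -n-2$, $n\mapsto -n-1$, and $j\mapsto -j-1$---in the $\psi$ case: the residual contributions from $R^\pm\setminus R_1^\pm$ must reproduce Euler's pentagonal expansion exactly, with the single stray point $(-1,0)$ precisely supplying the $-q$ corresponding to the $k=1$ pentagonal summand. Establishing $(\ast)$ for $\nu(-q)$ is the secondary obstacle; it is a standard Bailey-chain computation but requires a careful choice of the underlying Bailey pair and interchange of summation.
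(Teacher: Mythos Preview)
Your argument for \eqref{equation:psi3(q)-hecke} is correct and substantially more elementary than the paper's. The paper converts the right-hand side to $f_{3,5,3}(q^2,q^3,q)$ and then invokes the full Appell--Lerch machinery of Theorem~\ref{theorem:masterFnp} (Corollary~\ref{corollary:cor-n3p2}, Corollary~\ref{corollary:msplitn2}, the known expression \eqref{equation:3rdpsi} for $\psi$, and several theta identities). You instead take Andrews' identity $(q)_\infty(1+\psi(q))=\sum_{n\ge 0}(-1)^n q^{2n^2+n}(1-q^{6n+6})S_n$ as input, use the three involutions to match the two Hecke-type regions, and close with Euler's pentagonal number theorem. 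The bookkeeping checks: one finds $(\sum_{R^+}-2\sum_{R_1^+})=-\sum_{n\ge 0}(-1)^n q^{n(3n+1)/2}$ and $(\sum_{R^-}-2\sum_{R_1^-})=\sum_{n\le -1}(-1)^n q^{n(3n+1)/2}$, which combine to $-(q)_\infty$. One small point: your ``stray'' contribution $-q$ at $(n,j)=(-1,0)$ is not extra---it \emph{is} the $n=-1$ term of the diagonal $j=-n-1$, so there is no separate summand to add. Your approach trades the paper's heavy but systematic machinery for a short combinatorial argument that depends on already knowing Andrews' formula.

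Your treatment of \eqref{equation:nu3(-q)-hecke}, however, has a genuine gap. The reduction to $(\ast)$ via $T_n=S_n+S_{n-1}$ and the shift $n\mapsto n+1$ is correct, but $(\ast)$ is simply an equivalent reformulation of the target identity---you have not moved closer to a proof. The sketch you give (expand $(q^{2n+3};q^2)_\infty$ by Euler, interchange sums, and ``apply a Bailey-pair identification'') is too vague to evaluate: you do not specify which Bailey pair, and the step where an inner partial sum is to be ``recognized as $S_n$ times the appropriate theta factor'' is exactly the nontrivial content. The paper's proof of \eqref{equation:nu3(-q)-hecke} proceeds quite differently: after $q\to q^2$ it exploits the fact that the quadratic form $4n^2-j^2$ factors, sets $j=2n-k$, sums the resulting finite geometric series in $n$, and obtains a linear combination of twelve Appell--Lerch sums $m(\,\cdot\,,q^{24},\,\cdot\,)$ which it then collapses via \eqref{equation:m-change-z} and theta identities to $\nu(-q^2)$. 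If you want to keep your more elementary strategy, you need either to locate $(\ast)$ in the literature (the paper notes that Corollary~6 of \cite{A2} gives an alternative route) or to actually carry out the Bailey-pair computation with a specific pair.
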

 \noindent In \cite[$(1.14)$, $(1.15)$]{A2}, Andrews also showed the following for two new mock theta functions:
 \begin{align}
\overline{\psi}_0(q):= \sum_{n=0}^{\infty}\frac{q^{2n^2}}{(-q;q)_{2n}}&=\frac{1}{(q^2;q^2)_{\infty}}\sum_{n=0}^{\infty}q^{4n^2+n}(1-q^{6n+3})\sum_{j=-n}^n(-1)^jq^{-j^2},\label{equation:A278-1.14}\\
 \overline{\psi}_1(q):= \sum_{n=0}^{\infty}\frac{q^{2n^2+2n}}{(-q;q)_{2n+1}}&=\frac{1}{(q^2;q^2)_{\infty}}\sum_{n=0}^{\infty}q^{4n^2+3n}(1-q^{2n+1})\sum_{j=-n}^n(-1)^jq^{-j^2}.\label{equation:A278-1.15}
 \end{align}
These four functions are related.  Indeed, the four functions $1+2\psi(q),$ $\nu(-q)$, $\overline{\psi}_0(q)$, $\overline{\psi}_1(q)$ form a vector-valued mock theta function not unlike that for the fifth order mock theta functions $f_0(q)$, $f_1(q)$, $F_0(q)$, $F_1(q)$ as found in \cite{Zw}.  It turns out that Andrews' two new mock theta functions can be written in terms of the third order mock theta function $\phi(q)$, where
\begin{equation*}
\phi(q):=\sum_{n= 0}^{\infty}\frac{q^{n^2}}{(-q^2;q^2)_n}.
\end{equation*}
A straightforward exercise with Eulerian forms reveals that
\begin{align}
2\overline{\psi}_0(q^2)&= \phi(q)+\phi(-q),\label{equation:A278-1.14phi}\\
2q \overline{\psi}_1(q^2)&=\phi(q)-\phi(-q).\label{equation:A278-1.15phi}
 \end{align}

We note that the third order mock theta functions can all be written in terms of $g(x,q)$ \cite{W3}.  In \cite{H1}, Hickerson proved the mock theta conjectures.  These are identities which express the fifth order mock theta functions in terms of the universal mock theta function $g(x,q)$ and theta functions.  In \cite{H2}, Hickerson found and then proved analogous identities for the seventh order functions.  Here we prove similar identities for Andrews' two new mock theta functions of (\ref{equation:A278-1.14}) and (\ref{equation:A278-1.15}):
 \begin{theorem} \label{theorem:newmockthetaid} The following identities are true:
 \begin{align}
\overline{\psi}_0(q)&=2-2qg(-q,q^8)-\frac{J_{1,2}{\overline{J}}_{3,8}}{J_2},\label{equation:1.14mockthetaid}\\
\overline{\psi}_1(q)&=2q^2g(-q^3,q^8)+\frac{J_{1,2}{\overline{J}}_{1,8}}{J_2}.\label{equation:1.15mockthetaid}
 \end{align}
 \end{theorem}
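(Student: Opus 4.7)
The plan is to start from Andrews' Hecke-type representations (\ref{equation:A278-1.14}) and (\ref{equation:A278-1.15}) for $\overline{\psi}_0(q)$ and $\overline{\psi}_1(q)$, and to apply Theorem~\ref{theorem:masterFnp} of \cite{HM}.  The standard substitution $(r,s)=(n-j,\,n+j)$ sends the index set $\{n\geq 0,\ -n\leq j\leq n\}$ to the positive Hecke cone $\{r,s\geq 0\}$, subject to the parity constraint $r\equiv s\pmod 2$.  The companion pieces (the $-q^{4n^2+7n+3-j^2}$ summand for $\overline{\psi}_0$, and the $-q^{4n^2+5n+1-j^2}$ summand for $\overline{\psi}_1$) are absorbed into the negative Hecke cone by the involution $n\mapsto -n-1$, under which $4n^2+7n+3\mapsto 4n^2+n$ and $4n^2+5n+1\mapsto 4n^2+3n$, so that the resulting expression has the Hecke sign-weight $\sg(r)$.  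After splitting by the common parity of $r$ and $s$, each of $\overline{\psi}_0(q)\cdot J_2$ and $\overline{\psi}_1(q)\cdot J_2$ becomes a short linear combination of Hecke-type sums of the form $f_{3,5,3}(\cdot,\cdot,q^2)$, explicitly $f_{3,5,3}(q^4,q^4,q^2)+q^{5}f_{3,5,3}(q^{12},q^{12},q^2)$ and its $\overline{\psi}_1$-analogue.

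Next I would apply Theorem~\ref{theorem:masterFnp} with $(n,p)=(3,2)$ to each such $f_{3,5,3}$.  The output is a combination of Appell-Lerch sums $m(\cdot,q^{96},-1)$ together with the theta residues $\theta_{3,2}(\cdot,\cdot,q^2)$.  Using the standard change-of-$z$ identity for Appell-Lerch sums from \cite{HM} to replace the third argument $-1$ by values $\pm q^{\alpha}$, the pieces coming from the two parity classes consolidate into Appell-Lerch sums at the smaller base $q^{24}$---precisely the base appearing in $g(-q,q^8)$ and $g(-q^3,q^8)$ after specializing (\ref{equation:g-to-m}) with $q\mapsto q^8$.  Invoking (\ref{equation:g-to-m}) in this form identifies the $m$-function part with $-2qg(-q,q^8)$ (respectively with $2q^2g(-q^3,q^8)$); the constant $2$ in (\ref{equation:1.14mockthetaid}) appears in order to compensate for the leading $-1$ inside $g(x,q)=x^{-1}\bigl(-1+\sum_{n\ge 0}q^{n^2}/((x)_{n+1}(q/x)_n)\bigr)$.

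The main obstacle is the theta-function bookkeeping.  The residues $\theta_{3,2}(q^4,q^4,q^2)$ and $\theta_{3,2}(q^{12},q^{12},q^2)$ produced by Theorem~\ref{theorem:masterFnp} are each sums of four ratios of Jacobi triple products, and combined with the theta discrepancies introduced by the change-of-$z$ and base consolidation for the $m$-functions, the total must collapse to the single tidy quotients $J_{1,2}\overline{J}_{3,8}/J_2$ and $J_{1,2}\overline{J}_{1,8}/J_2$.  Achieving this requires repeated use of Jacobi's triple product identity together with the functional equations $j(q^{n}x;q)=(-x)^{-n}q^{-\binom{n}{2}}j(x;q)$ and $j(x;q)=-x\,j(x^{-1};q)$, and also product identities among the symbols $J_{a,m}$, $\overline{J}_{a,m}$, and $J_m$.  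It is this theta-bookkeeping, rather than the application of Theorem~\ref{theorem:masterFnp} or the invocation of (\ref{equation:g-to-m}), that will occupy most of the argument.
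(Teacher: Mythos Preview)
Your overall strategy matches the paper's: start from Andrews' Hecke forms, recognize the $(n,p)=(3,2)$ Hecke sum, apply Theorem~\ref{theorem:masterFnp}, and identify the Appell--Lerch output with $g(-q,q^8)$ or $g(-q^3,q^8)$ via (\ref{equation:g-to-m}).  Your parity split giving $f_{3,5,3}(q^4,q^4,q^2)+q^{5}f_{3,5,3}(q^{12},q^{12},q^2)$ is exactly what the paper obtains.

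The paper, however, does \emph{not} apply Theorem~\ref{theorem:masterFnp} to the two $f_{3,5,3}(\cdot,\cdot,q^2)$ separately.  Instead it first uses (\ref{equation:fabc-mod2}) in reverse to collapse them into the single sum $f_{3,5,3}(q^{5/4},-q^{5/4},-q^{1/2})$ (resp.\ $f_{3,5,3}(q^{9/4},-q^{9/4},-q^{1/2})$), and then applies the streamlined Corollary~\ref{corollary:cor-n3p2} once.  Because the base is now $-q^{1/2}$, the Appell--Lerch sums come out directly at base $(-q^{1/2})^{48}=q^{24}$, which is precisely the base of $g(-q,q^8)$ under (\ref{equation:g-to-m}).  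Your route, applying the theorem at base $q^2$, produces $m(\cdot,q^{96},-1)$'s; getting from $q^{96}$ down to $q^{24}$ is \emph{not} a change-of-$z$ operation (identity (\ref{equation:m-change-z}) leaves the middle argument fixed) but rather an instance of Theorem~\ref{theorem:msplit}/Corollary~\ref{corollary:msplitn2} run backwards, which you do not mention.  This is the one genuine imprecision in your outline.

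On the theta side, the paper uses Proposition~\ref{proposition:HM-ID4.5} (with $z=q^{-3}$) rather than the bare (\ref{equation:g-to-m}), so that the $m$-functions already have third argument $-1$ and match $g_{3,5,3}$ directly; the constant $2$ arises from (\ref{equation:m-fnq-x-alt1}), not from the $-1$ in the definition of $g$.  Finally, the residual theta identity does not dissolve under generic triple-product manipulations: the paper isolates it as the pair of specific identities in Proposition~\ref{lemma:newids}, whose proof in turn rests on Lemma~\ref{lemma:0}, established via Proposition~\ref{proposition:H1Thm1.7}.  You should expect to need an equivalent nontrivial theta identity rather than routine bookkeeping.
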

We sketch how one is led to such identities.  Once one has the Hecke form of the mock theta function, one uses Theorem \ref{theorem:masterFnp} and basic Appell-Lerch sum properties as a guide to produce an expression like identity (\ref{equation:g-to-m}).  In the process, it is best to ignore the theta functions as well as the $z$ part of the $m(x,q,z)$ terms.  What is left is a theta function, so one uses a software package such as Maple or Mathematica to determine if the theta function has a nice form.  For both $\overline{\psi}_0(q)$ and $\overline{\psi}_1(q),$ that is the case.

Vector-valued mock theta functions tend to come in pairs.  The above four functions can all be written in terms of $f_{3,5,3}(x,y,q)$'s, so \cite{Zw} suggests that the paired vector might consist of functions which can be written in terms of $f_{1,7,1}(x,y,q)$'s.  How one goes about finding such a pair is not obvious.  Sometimes, mock theta functions are sign flips away from a theta function.  So with this in mind, we recall the following identity which is found in Andrews \cite[$(1.2)$]{A0} as well as Kac and Peterson \cite[$(5.19)$]{KP}: 
\begin{align}
J_1^2&=\sum_{n= 0}^{\infty}q^{2n^2+n}(1-q^{2n+1})\sum_{j=-n}^n(-1)^jq^{-3j^2/2+j/2}\\
&=\Big ( \sum_{\substack{n+j\ge 0\\n-j\ge 0}}- \sum_{\substack{n+j< 0\\n-j< 0}} \Big )(-1)^jq^{2n^2+n-3j^2/2+j/2}=f_{1,7,1}(q,q^2,q)-qf_{1,7,1}(q^3,q^4,q).\notag
\end{align}
Making some judicious sign flips, we find that 
\begin{align}
\overline{J}_{1,4}\cdot \phi(q)&=\sum_{n= 0}^{\infty}(-1)^{n}q^{2n^2+n}(1+q^{2n+1})\sum_{j=-n}^n(-1)^jq^{-3j^2/2+j/2}\label{equation:hecke-phi}\\
&=\Big ( \sum_{\substack{n+j\ge 0\\n-j\ge 0}}- \sum_{\substack{n+j< 0\\n-j< 0}} \Big )(-1)^{n+j}q^{2n^2+n-3j^2/2+j/2}\notag\\
&=f_{1,7,1}(-q,-q^2,q)+qf_{1,7,1}(-q^3,-q^4,q).\notag
\end{align}
To find the other components of the vector-valued mock theta function, Zwegers' thesis \cite{Zw} leads us to
{\allowdisplaybreaks \begin{align}
\overline{J}_{1,4}\cdot \nu(q)&=\sum_{n= 0}^{\infty}(-1)^nq^{2n^2+2n}\sum_{j=-n}^n(-1)^jq^{-3j^2/2+j/2}\label{equation:hecke-nu}\\
&=\frac{1}{2}\Big ( \sum_{\substack{n+j\ge 0\\n-j\ge 0}}- \sum_{\substack{n+j< 0\\n-j< 0}} \Big )(-1)^{n+j}q^{2n^2+2n-3j^2/2+j/2},\notag\\
J_{1,2}\cdot \Big ( q\overline{\phi}_0(q)+1\Big )&=\sum_{n= 0}^{\infty}q^{4n^2+n}(1-q^{6n+3})\sum_{j=-n}^n(-1)^jq^{-3j^2-j}\label{equation:hecke-phibar0}\\
&=\Big ( \sum_{\substack{n+j\ge 0\\n-j\ge 0}}- \sum_{\substack{n+j< 0\\n-j< 0}} \Big )(-1)^{j}q^{4n^2+n-3j^2-j},\notag\\
J_{1,2}\cdot \overline{\phi}_1(q)&=\sum_{n= 0}^{\infty}q^{4n^2+3n}(1-q^{2n+1})\sum_{j=-n}^n(-1)^jq^{-3j^2-j}\label{equation:hecke-phibar1}\\
&=\Big ( \sum_{\substack{n+j\ge 0\\n-j\ge 0}}- \sum_{\substack{n+j< 0\\n-j< 0}} \Big )(-1)^{j}q^{4n^2+3n-3j^2-j}.\notag
\end{align}}%
The last two are two new mock theta functions of Bringmann, Hikami, and Lovejoy \cite{BHL}, where
\begin{align*}
\overline{\phi}_{0}(q):&=\sum_{n= 0}^{\infty}q^n(-q)_{2n+1} {\text{ and }}\overline{\phi}_{1}(q):=\sum_{n= 0}^{\infty}q^n(-q)_{2n}.
\end{align*}
\noindent We also note that (\ref{equation:hecke-phibar0}) is a slightly rewritten \cite[$(2.7)$] {BHL}.  We will prove

\begin{theorem}\label{theorem:four-new-heckes} Identities (\ref{equation:hecke-phi}) - (\ref{equation:hecke-phibar1}) are true.
\end{theorem}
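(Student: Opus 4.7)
My plan is to prove each of (\ref{equation:hecke-phi})--(\ref{equation:hecke-phibar1}) in two stages, one for each equal sign. For each identity the second equality (the one that rewrites the Eulerian-style double sum as a difference of two quadrant sums, and, in the case of $\phi$, further as a combination of $f_{1,7,1}$'s) is combinatorial; the first equality (the identification of the theta-weighted mock theta function with the bilateral $(n,j)$ double sum) is the analytic heart.

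For the second equality, I would change summation variables via $(n,j)\mapsto(-n-1,-j)$ on the quadrant $\{n+j<0,\,n-j<0\}$. Combined with the $j\mapsto-j$ symmetry of the inner sums $\sum_{j=-n}^{n}(-1)^{j}q^{-3j^{2}/2+j/2}$ and $\sum_{j=-n}^{n}(-1)^{j}q^{-j^{2}}$, this substitution collapses the lower-left contribution back onto the upper-right quadrant with an extra factor $q^{2n+1}$ for $\phi$, $q^{6n+3}$ for $\overline{\phi}_0$, and $q^{2n+1}$ for $\overline{\phi}_1$, producing the bracketed factors $(1+q^{2n+1})$, $(1-q^{6n+3})$, $(1-q^{2n+1})$ on each respective LHS. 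For $\nu$ there is no such bracketed factor, so the two quadrants contribute equally and one picks up the prefactor $1/2$. To reach the $f_{1,7,1}$ form in (\ref{equation:hecke-phi}), I further split the Hecke sum by the parity of $r=n+j$ and shift $(r,s)$; the odd-parity piece produces the prefactor $q$ and parameters $(-q^{3},-q^{4})$.

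For the first equality I would use Bailey's lemma applied to the Bailey pair attached to the Eulerian form of each mock theta function. For $\phi(q)=\sum_{n\ge 0}q^{n^{2}}/(-q^{2};q^{2})_n$, the pair relative to $a=1$ with $\beta_n=1/(-q^{2};q^{2})_n$ produces, after Jacobi's triple product is applied to the companion $\alpha_n$, an identity whose extra factor on the sum side is precisely $\overline{J}_{1,4}$, matching the LHS of (\ref{equation:hecke-phi}). Analogous Bailey pairs associated with the Eulerian forms of $\nu,\overline{\phi}_0,\overline{\phi}_1$ handle the other three identities; indeed, as the excerpt notes, (\ref{equation:hecke-phibar0}) is a mild rewriting of $(2.7)$ of \cite{BHL} and (\ref{equation:hecke-phibar1}) follows by the same machinery. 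A uniform alternative is to apply Theorem \ref{theorem:masterFnp} to $f_{1,7,1}$ at $(n,p)=(1,6)$, rewriting the Hecke sum as Appell-Lerch sums plus an explicit $\theta_{1,6}$ correction; expressing $\phi$ and $\nu$ via $g(x,q)$ (Watson) and converting to $m$-functions by (\ref{equation:g-to-m}) then reduces each identity to a pure theta-function equality, to be cleared by the triple product.

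The main obstacle in either route is theta-function bookkeeping. In the Theorem \ref{theorem:masterFnp} approach, $\theta_{1,6}$ involves a double sum over $(r^{*},s^{*})\in\{0,\ldots,5\}^{2}$ of ratios of theta functions of conductor $p^{2}(2n+p)=504$, and reducing these to the plain factor $\overline{J}_{1,4}$ requires several triple- and quintuple-product manipulations. In the Bailey-lemma approach the analogous difficulty is to verify that the companion $\alpha_n$ produced by the chosen Bailey pair matches, term by term, the inner pentagonal sum times $\overline{J}_{1,4}$, and likewise for $\nu$. The $\nu$ case additionally requires care with the prefactor $1/2$ coming from the symmetric contribution of the two Hecke quadrants; once these reductions are in hand for $\phi$ and $\nu$, the $\overline{\phi}_0, \overline{\phi}_1$ identities follow by the same pattern (or by direct reference to \cite{BHL}).
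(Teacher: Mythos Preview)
Your treatment of the second equalities (quadrant reflection via $(n,j)\mapsto(-n-1,-j)$ together with the $j\mapsto-j$ symmetry of the inner sum) is correct, and for $\overline{\phi}_{0}$ and $\overline{\phi}_{1}$ your Bailey-pair plan coincides with the paper's: it invokes the pairs of Theorem~4 and Lemmas~6--7 of \cite{A1} with $q\to q^{2}$, $\rho_{1}=-q^{3}$ (resp.\ $-q$), $\rho_{2}=-q^{2}$.

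For $\phi$ and $\nu$, however, the paper takes a different and more economical route than either of your two options. Rather than keeping the $f_{1,7,1}$ form and applying Theorem~\ref{theorem:masterFnp} at $(n,p)=(1,6)$, it substitutes $(r,s)=(n-j,j)$ to convert the Hecke sum into $f_{4,4,1}$ form---the right side of (\ref{equation:hecke-phi}) becomes $f_{4,4,1}(q^{3},-q^{2},q)+q\,f_{4,4,1}(q^{5},-q^{4},q)$ and that of (\ref{equation:hecke-nu}) becomes $f_{4,4,1}(q^{4},-q^{3},q)$---and then applies Theorem~\ref{theorem:main-acdivb}, which is available since $b=4$ is divisible by both $a=4$ and $c=1$. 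The payoff is that $h_{4,4,1}$ produces Appell--Lerch sums directly in moduli $q^{b^{2}/c-a}=q^{12}$ and $q^{b^{2}/a-c}=q^{3}$; after one use of (\ref{equation:m-change-z}) these match the $m(\cdot,q^{12},\cdot)$ representations in (\ref{equation:3rdphi}) and (\ref{equation:3rdnu}) on the nose, leaving only a theta-quotient cancellation (handled via Proposition~\ref{proposition:hecke-phi-id} and routine product manipulations). Your $f_{1,7,1}$ route would instead land on $m(\cdot,q^{48},\cdot)$ terms (since $b^{2}-ac=48$), forcing an extra pass through Corollary~\ref{corollary:msplitn2} before you can compare with (\ref{equation:3rdphi})--(\ref{equation:3rdnu}); note also that the $\theta_{1,6}$ correction lives at modulus $p^{2}(2n+p)=288$, not $504$. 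Your Bailey-pair alternative for $\phi$ and $\nu$ is plausible but unspecified: the pair with $\beta_{n}=1/(-q^{2};q^{2})_{n}$ is not in the standard tables, and you would have to exhibit its $\alpha_{n}$ and check that it reproduces the pentagonal inner sum before the argument goes through.
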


We will also prove identities between Andrews' two new mock theta functions and Bringmann, Hikami, and Lovejoy's new mock theta functions.  We recall that Bringmann, Hikami, and Lovejoy also proved
\begin{align}
2q^2\overline{\phi}_{0}(q^2)&=\psi(q)+\psi(-q),\label{equation:BHLpsieven}\\
2q\overline{\phi}_{1}(q^2)&=\psi(q)-\psi(-q).\label{equation:BHLpsiodd}
\end{align}
We express $\overline{\phi}_0(q)$ and $\overline{\phi}_1(q)$ in terms of $g(x,q).$
\begin{theorem} \label{theorem:BHLids} The following identities are true:
\begin{align}
q\overline{\phi}_{0}(q)&=-1+qg(-q,q^{8})+\frac{\overline{J}_{2,4}\overline{J}_{3,8}}{J_2},\label{equation:BHLphibar0}\\
\overline{\phi}_{1}(q)&=-q^2g(-q^3,q^{8})+\frac{\overline{J}_{2,4}\overline{J}_{1,8}}{J_2}.\label{equation:BHLphibar1}
\end{align}
\end{theorem}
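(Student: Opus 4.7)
The approach parallels the strategy sketched for Theorem \ref{theorem:newmockthetaid} in the paragraph immediately following that theorem. The inputs are the Hecke-type representations (\ref{equation:hecke-phibar0}) and (\ref{equation:hecke-phibar1}) for $J_{1,2}(q\overline{\phi}_0(q)+1)$ and $J_{1,2}\overline{\phi}_1(q)$, both established in Theorem \ref{theorem:four-new-heckes}.

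First I rewrite each Hecke sum as a linear combination of $f_{1,7,1}$-functions. The quadratic form $4n^2-3j^2$ in the exponent has discriminant $-12$, matching that of $\binom{r}{2}+7rs+\binom{s}{2}$, so $f_{1,7,1}$ is the natural target. The substitution $(n,j)=(r+s,r-s)$ bijects $\Z^2$ onto the sublattice $\{n+j\textnormal{ even}\}\subset\Z^2$, and the translated substitution $(n,j)=(r+s+1,r-s)$ covers the complement $\{n+j\textnormal{ odd}\}$. Under each, $\sg(n+j)=\sg(n-j)$ becomes $\sg(r)=\sg(s)$, and a direct computation yields
\begin{align*}
J_{1,2}(q\overline{\phi}_0(q)+1) &= f_{1,7,1}(q,q^3,q^2)+q^5 f_{1,7,1}(q^9,q^{11},q^2),\\
J_{1,2}\overline{\phi}_1(q) &= f_{1,7,1}(q^3,q^5,q^2)+q^7 f_{1,7,1}(q^{11},q^{13},q^2).
\end{align*}

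Next I apply Theorem \ref{theorem:masterFnp} with $(n,p)=(1,6)$ and base $q^2$ to each of the four $f_{1,7,1}(\cdot,\cdot,q^2)$-pieces above. Because $a=c=1$ both inner sums in $g_{1,7,1}$ collapse to their $t=0$ summand, producing
\begin{equation*}
f_{1,7,1}(x,y,q^2)=j(x;q^2)\,m(-q^{54}y/x^{7},q^{96},-1)+j(y;q^2)\,m(-q^{54}x/y^{7},q^{96},-1)+\theta_{1,6}(x,y,q^2).
\end{equation*}
The identity $j(q^{2k+1};q^2)=(-1)^{k}q^{-k^{2}}J_{1,2}$ lets the theta prefactors be evaluated explicitly, so the common $J_{1,2}$ factors out; after dividing through by $J_{1,2}$ one is left with a $\Z$-linear combination of four Appell-Lerch sums at base $q^{96}$ plus a theta residue.

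The third step is the base reduction from $q^{96}$ down to $q^{24}$, required so that the Appell-Lerch content can match the decomposition of $g(-q,q^8)$ and $g(-q^{3},q^{8})$ given by (\ref{equation:g-to-m}). I would perform a $4$-section on the defining series of each $m(\cdot,q^{96},-1)$, recognize each residue-class subseries as an $m(\cdot,q^{24},\cdot)$ after extracting a Jacobi-triple-product factor, and use the change-of-$z$ identity for $m(x,q,z)$ (Theorem 3.3 of \cite{HM}) to align $z$-parameters. With the alignment correctly arranged, the Appell-Lerch content collapses to $m(-q^{13},q^{24},q^{2})-q^{-1}m(-q^{5},q^{24},q^{2})=qg(-q,q^{8})$ in the $\overline{\phi}_0$ case and to $-q^{2}g(-q^{3},q^{8})$ in the $\overline{\phi}_1$ case. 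Finally, following the remark after Theorem \ref{theorem:newmockthetaid}, a computer algebra package is used to conjecture that the aggregate theta residue simplifies to $\overline{J}_{2,4}\overline{J}_{3,8}/J_{2}$ (resp.\ $\overline{J}_{2,4}\overline{J}_{1,8}/J_{2}$), and the resulting finite theta-function identity is verified by standard Jacobi-triple-product manipulations. The principal obstacle is the base reduction: the factor $96/24=4$ forces four base-$q^{96}$ summands to fuse into each base-$q^{24}$ summand, and the arguments must be arranged to land precisely on $-q^{13},-q^{5}$ and $-q^{7},-q^{-1}$; controlling this alignment while tracking every theta remainder is the bulk of the computation.
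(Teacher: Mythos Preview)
Your approach is the one the paper explicitly acknowledges as viable but sets aside: just before Theorem \ref{theorem:BHLids} the text says ``We could use Theorem \ref{theorem:masterFnp} to prove these two identities, but for brevity, we will use new results which follow from Appell-Lerch sum properties.'' The paper's actual proof is about five lines. It invokes the known relation $2q^{2}\overline{\phi}_{0}(q^{2})=\psi(q)+\psi(-q)$ from \cite{BHL}, writes $\psi(\pm q)=\pm q\,g(\pm q,q^{4})$ via (\ref{equation:3rdpsi}), and then applies Corollary \ref{corollary:rootsof1n2k1} (the $g$-splitting identity derived from the lost notebook) with $x=q$ at base $q^{4}$ to collapse $g(q,q^{4})-g(-q,q^{4})$ directly to $-2q^{-1}+2q\,g(-q^{2},q^{16})+(\text{single theta quotient})$. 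A product rearrangement finishes. This bypasses the Hecke-sum machinery and the $36$-term $\theta_{1,6}$ residue entirely; the identity (\ref{equation:BHLphibar1}) is handled the same way via (\ref{equation:BHLpsiodd}) and (\ref{equation:rootsof1n2k0}).

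Your route is correct in outline and the $f_{1,7,1}$ decomposition you write down is right (indeed the paper derives the same combination $f_{1,7,1}(q,q^{3},q^{2})+q^{5}f_{1,7,1}(q^{9},q^{11},q^{2})$ in the course of proving (\ref{equation:hecke-phibar0})), but it is far more laborious. One correction: the ``base reduction from $q^{96}$ to $q^{24}$'' is not a $4$-section of the series for $m(\cdot,q^{96},-1)$---sectioning raises the base, it does not lower it. What actually happens is that the four Appell-Lerch sums at $q^{96}$ coming out of $g_{1,7,1}$ are, coefficient for coefficient, exactly those produced by applying Corollary \ref{corollary:msplitn2} (the $n=2$ case of Theorem \ref{theorem:msplit}, going from base $q^{24}$ to base $q^{96}$) to the two target sums $m(-q^{13},q^{24},\cdot)$ and $m(-q^{5},q^{24},\cdot)$. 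You are running that splitting in reverse, and its theta correction joins the $\theta_{1,6}$ pieces in the residue you must then verify.
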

\noindent The above two identities were found using Theorem \ref{theorem:masterFnp} as a guide.  Here the respective Hecke-type forms are in terms of $f_{1,7,1}(x,y,q)$'s.  We could use Theorem \ref{theorem:masterFnp} to prove these two identities, but for brevity, we will use new results which follow from Appell-Lerch sum properties of \cite{HM}.

Using Theorems \ref{theorem:newmockthetaid} and \ref{theorem:BHLids}, we then have the following immediate corollary which relates the mock theta functions of Andrews \cite{A2} to those of Bringmann, Hikami, and Lovejoy \cite{BHL}.
\begin{corollary}  The following identities are true:
\begin{align}
\overline{\psi}_0(q)+2q\overline{\phi}_0(q)&=-\frac{\overline{J}_{3,8}}{J_2}\cdot\Big (J_{1,2}-2\overline{J}_{2,4}\Big ),\\
\overline{\psi}_1(q)+2\overline{\phi}_1(q)&=\frac{\overline{J}_{1,8}}{J_2}\cdot\Big (J_{1,2}+2\overline{J}_{2,4}\Big ).
\end{align}
\end{corollary}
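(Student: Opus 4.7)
The plan is to observe that this corollary is purely algebraic: it follows by adding the appropriate expressions from Theorems~\ref{theorem:newmockthetaid} and~\ref{theorem:BHLids} and checking that the universal mock theta function contributions cancel, leaving only theta quotients.

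For the first identity, I would write
\begin{equation*}
\overline{\psi}_0(q)+2q\overline{\phi}_0(q)=\Bigl(2-2qg(-q,q^8)-\tfrac{J_{1,2}\overline{J}_{3,8}}{J_2}\Bigr)+2\Bigl(-1+qg(-q,q^8)+\tfrac{\overline{J}_{2,4}\overline{J}_{3,8}}{J_2}\Bigr).
\end{equation*}
The constants $2$ and $-2$ cancel, the $g(-q,q^8)$ terms cancel, and what remains is
\begin{equation*}
-\tfrac{J_{1,2}\overline{J}_{3,8}}{J_2}+\tfrac{2\overline{J}_{2,4}\overline{J}_{3,8}}{J_2}=-\tfrac{\overline{J}_{3,8}}{J_2}\bigl(J_{1,2}-2\overline{J}_{2,4}\bigr),
\end{equation*}
which is the claimed right-hand side. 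The second identity is treated identically: adding the expressions for $\overline{\psi}_1(q)$ and $2\overline{\phi}_1(q)$ causes the $q^2 g(-q^3,q^8)$ terms to cancel, and factoring $\overline{J}_{1,8}/J_2$ out of the remaining theta contributions yields the desired form.

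Since the cancellations are immediate, there is no substantive obstacle; the only thing to verify is that the $g$-terms match in sign and coefficient after multiplying $\overline{\phi}_0(q)$ by $2q$ and $\overline{\phi}_1(q)$ by $2$, which they do by inspection of Theorem~\ref{theorem:BHLids}. Thus the corollary is simply the statement that the mock (i.e.\ $g$-function) parts of $\overline{\psi}_i(q)$ and $-2q^{1-i}\overline{\phi}_i(q)$ agree for $i\in\{0,1\}$, so their sums are theta functions, whose explicit form is read off directly.
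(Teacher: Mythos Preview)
Your proof is correct and matches the paper's approach exactly: the paper states this as an ``immediate corollary'' of Theorems~\ref{theorem:newmockthetaid} and~\ref{theorem:BHLids} with no further argument, and your explicit verification that the $g$-terms cancel upon adding is precisely what ``immediate'' means here.
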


In Section \ref{section:prelim}, we recall useful facts covering theta function identities, Appell-Lerch sum properties, Hecke-type double sums, and third order mock theta functions.  We also recall and give new proofs of properties found in \cite{RLN} for the universal mock theta function $g(x,q)$.  In Section \ref{section:proof-newheckethm}, we prove Theorem \ref{theorem:newheckethm}.  Here we prove identity (\ref{equation:psi3(q)-hecke}) with Theorem \ref{theorem:masterFnp}.  We could use Theorem \ref{theorem:masterFnp} to prove identity (\ref{equation:nu3(-q)-hecke}), but for variety, we use a different technique.  One could also use Corollary $6$ of \cite{A2}.  Theorem \ref{theorem:newmockthetaid} is shown in Section \ref{section:proof-newmockthetaid}.  Here we use Theorem \ref{theorem:masterFnp} for both identities.  In Section \ref{section:proofs-four-new-heckes} we prove Theorem \ref{theorem:four-new-heckes}.  For identities (\ref{equation:hecke-phi}) and (\ref{equation:hecke-nu}) we rewrite the Hecke-type doube sums and use Theorem \ref{theorem:main-acdivb}.  For identities (\ref{equation:hecke-phibar0}) and (\ref{equation:hecke-phibar1}) we use the Bailey pair techniques of \cite{A1}.  In Section \ref{section:proof-BHLids}, we prove Theorem \ref{theorem:BHLids}.


\section{Preliminaries}\label{section:prelim}
\subsection{Properties of theta functions}\label{section:thetaprelim}

For later use, we state the following easily shown identities:
\begin{gather*}
\overline{J}_{0,1}=2\overline{J}_{1,4}=\frac{2J_2^2}{J_1},  \overline{J}_{1,2}=\frac{J_2^5}{J_1^2J_4^2},   J_{1,2}=\frac{J_1^2}{J_2},   \overline{J}_{1,3}=\frac{J_2J_3^2}{J_1J_6}, \\
 J_{1,4}=\frac{J_1J_4}{J_2},   J_{1,6}=\frac{J_1J_6^2}{J_2J_3},   \overline{J}_{1,6}=\frac{J_2^2J_3J_{12}}{J_1J_4J_6}.
\end{gather*}
We state additional identities:
\begin{subequations}
\begin{equation}
j(q^n x;q)=(-1)^nq^{-\binom{n}{2}}x^{-n}j(x;q), \ \ n\in\mathbb{Z},\label{equation:1.8}
\end{equation}
\begin{equation}
j(x;q)=j(q/x;q)=-xj(x^{-1};q)\label{equation:1.7},
\end{equation}
\begin{equation}
j(-x;q)={J_{1,2}j(x^2;q^2)}/{j(x;q)} \ \ {\text{if $x$ is not an integral power of $q$,}}\label{equation:1.9}
\end{equation}
\begin{equation}
j(x;q)={J_1}j(x,qx,\dots,q^{n-1}x;q^n)/{J_n^n} \ \ {\text{if $n\ge 1$,}}\label{equation:1.10}
\end{equation}
\begin{equation}
j(x;-q)={j(x;q^2)j(-qx;q^2)}/{J_{1,4}},\label{equation:1.11}
\end{equation}
\begin{equation}
j(z;q)=\sum_{k=0}^{m-1}(-1)^k q^{\binom{k}{2}}z^k
j\big ((-1)^{m+1}q^{\binom{m}{2}+mk}z^m;q^{m^2}\big ),\label{equation:jsplit}
\end{equation}
\begin{equation}
j(x^n;q^n)={J_n}j(x,\zeta_nx,\cdots, \zeta_n^{n-1}x;q^n)/{J_1^n} \ \ {\text{if $n\ge 1$}}\label{equation:1.12}
\end{equation}
\end{subequations}
\noindent where and $\zeta_n$ an $n$-th primitive root of unity.  We recall the classical partial fraction  expansion for the reciprocal of Jacobi's theta product
\begin{equation}
\sum_n\frac{(-1)^nq^{\binom{n+1}{2}}}{1-q^nz}=\frac{J_1^3}{j(z;q)},\label{equation:Reciprocal}
\end{equation}

\noindent where $z$ is not an integral power of $q$.  A convenient form of the Riemann relation for theta functions is
\begin{proposition}\label{proposition:CHcorollary} For generic $a,b,c,d\in \mathbb{C}^*$
\begin{align*}
j(ac,a/c,bd,b/d;q)=j(ad,a/d,bc,b/c;q)+b/c \cdot j(ab,a/b,cd,c/d;q).
\end{align*}
\end{proposition}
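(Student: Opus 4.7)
The plan is to fix $a,b,c\in\C^*$ and regard both sides of the identity as functions of the single variable $d\in\C^*$. Using the quasi-periodicity relation (\ref{equation:1.8}), I would first verify that each of the three products
$j(bd,b/d;q)$, $j(ad,a/d;q)$, $j(cd,c/d;q)$ transforms under $d\mapsto qd$ by the same scalar $q^{-1}d^{-2}$; for instance,
\[
j(qbd;q)\,j(b/(qd);q)=\bigl(-b^{-1}d^{-1}\bigr)\bigl(-q^{-1}(b/d)\bigr)j(bd;q)j(b/d;q)=q^{-1}d^{-2}\,j(bd;q)j(b/d;q).
\]
Consequently, both LHS and RHS lie in the space of functions $f\colon\C^*\to\C$ satisfying $f(qd)=q^{-1}d^{-2}f(d)$, a classical two-dimensional space of order-$2$ theta functions in $d$. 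Any nonzero element of this space has exactly two zeros (with multiplicity) in a fundamental domain for $d\sim qd$, so any element vanishing at three distinct points (mod $q^{\Z}$) must be identically zero.

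The next step is to exhibit four generic zeros of the difference LHS $-$ RHS. At $d=c$, the factor $j(c/d;q)=j(1;q)=0$ kills the second summand on the right, and the first summand collapses to $j(ac,a/c,bc,b/c;q)$, which is precisely LHS at $d=c$. The same cancellation at $d=1/c$ uses $j(cd;q)|_{d=1/c}=j(1;q)=0$. At $d=b$, the factor $j(b/d;q)=j(1;q)=0$ kills the LHS, and factoring $j(ab,a/b,bc;q)$ out of the two RHS summands leaves the bracket
\[
j(b/c;q)+(b/c)\,j(c/b;q),
\]
which vanishes because (\ref{equation:1.7}) gives $(b/c)j(c/b;q)=-j(b/c;q)$. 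The analogous cancellation at $d=1/b$ follows by the same calculation. So LHS $-$ RHS vanishes at the four points $d\in\{b,1/b,c,1/c\}$, which are pairwise distinct modulo $q^{\Z}$ for generic $a,b,c$; this exceeds the dimension of the ambient space, forcing LHS $=$ RHS.

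The only step that requires any care is the cancellation at $d=b$ and $d=1/b$, where one must correctly pull out the common factor $j(ab,a/b,bc;q)$ from the two RHS summands and then invoke (\ref{equation:1.7}) to collapse the bracket to zero. The remaining ingredients — the quasi-periodicity bookkeeping and the two-zero bound on nonzero order-$2$ theta functions — are standard.
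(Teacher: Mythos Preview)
The paper does not supply a proof of this proposition; it simply records it as ``a convenient form of the Riemann relation for theta functions'' and treats it as classical. Your argument is correct, and it is in fact precisely the method the paper deploys for its other theta identities: fix all variables but one, verify the functional equation $F(qd)=q^{-1}d^{-2}F(d)$, and then invoke Proposition~\ref{proposition:H1Thm1.7} (the Atkin--Swinnerton-Dyer zero-counting lemma) with $n=2$ to conclude that more than two zeros forces $F\equiv 0$. This is exactly how Propositions~\ref{proposition:hecke-phi-id} and~\ref{proposition:newgenid} and Lemma~\ref{lemma:0} are proved.

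Two minor remarks. First, you do not actually need the assertion that the ambient space is two-dimensional---only the zero-counting consequence, which is literally Proposition~\ref{proposition:H1Thm1.7}; invoking the dimension is fine but is a slightly stronger statement than the paper provides. Second, your four zeros $d\in\{b,1/b,c,1/c\}$ contain at least three distinct points modulo $q^{\mathbb Z}$ for generic $b,c$, which is all you need; the variable $a$ plays no role in the genericity, and the identity then extends to all $d$ by analyticity, matching the paper's ``generic $a,b,c,d$'' hypothesis.
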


\noindent We collect several useful results about theta functions in terms of a proposition:  
\begin{proposition}   For generic $x,y,z\in \mathbb{C}^*$ 
\begin{subequations}
\begin{equation}
j(qx^3;q^3)+xj(q^2x^3;q^3)=j(-x;q)j(qx^2;q^2)/J_2={J_1j(x^2;q)}/{j(x;q)},\label{equation:H1Thm1.0}
\end{equation}
\begin{equation}
j(x;q)j(y;q)=j(-xy;q^2)j(-qx^{-1}y;q^2)-xj(-qxy;q^2)j(-x^{-1}y;q^2),\label{equation:H1Thm1.1}
\end{equation}
\begin{equation}
j(-x;q)j(y;q)-j(x;q)j(-y;q)=2xj(x^{-1}y;q^2)j(qxy;q^2),\label{equation:H1Thm1.2A}
\end{equation}
\begin{equation}
j(-x;q)j(y;q)+j(x;q)j(-y;q)=2j(xy;q^2)j(qx^{-1}y;q^2),\label{equation:H1Thm1.2B}
\end{equation}
\end{subequations}
\end{proposition}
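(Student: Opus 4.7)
The natural order is (\ref{equation:H1Thm1.1}) first, (\ref{equation:H1Thm1.2A}) and (\ref{equation:H1Thm1.2B}) as immediate corollaries, and finally the two equalities of (\ref{equation:H1Thm1.0}). For (\ref{equation:H1Thm1.1}) I plan to expand $j(x;q)j(y;q)=\sum_{m,n\in\Z}(-1)^{m+n}q^{\binom{m}{2}+\binom{n}{2}}x^my^n$ by Jacobi's triple product and split the sum according to the parity of $m+n$. Writing $(m,n)=(k+l,k-l)$ when $m+n$ is even produces $q$-exponent $k^2+l^2-k$ and monomial $(xy)^k(x/y)^l$, so this piece factors as $j(-xy;q^2)j(-qxy^{-1};q^2)$; by (\ref{equation:1.7}) the second factor equals $j(-qx^{-1}y;q^2)$. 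The odd case $(m,n)=(k+l+1,k-l)$ factors analogously into $-x\,j(-qxy;q^2)j(-x^{-1}y;q^2)$ after a shift $l\mapsto l+1$ and another use of (\ref{equation:1.7}). Identities (\ref{equation:H1Thm1.2A}) and (\ref{equation:H1Thm1.2B}) then follow by substituting $x\mapsto-x$ and $y\mapsto-y$ into (\ref{equation:H1Thm1.1}) separately and taking the difference and the sum.

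The second equality of (\ref{equation:H1Thm1.0}), $j(-x;q)j(qx^2;q^2)/J_2=J_1j(x^2;q)/j(x;q)$, is purely algebraic: (\ref{equation:1.9}) gives $j(-x;q)=J_{1,2}j(x^2;q^2)/j(x;q)$, while (\ref{equation:1.10}) with $n=2$ (after $x\mapsto x^2$) gives $j(x^2;q^2)j(qx^2;q^2)=J_2^2j(x^2;q)/J_1$; combining with $J_{1,2}=J_1^2/J_2$ yields the claim.

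The first equality of (\ref{equation:H1Thm1.0}) is the only nontrivial calculation. I would multiply the series $j(-x;q)=\sum_m q^{\binom{m}{2}}x^m$ and $j(qx^2;q^2)=\sum_n(-1)^nq^{n^2}x^{2n}$ and collect by powers of $x$. Setting $k=m+2n$, the coefficient of $x^k$ becomes $q^{\binom{k}{2}}\sum_n(-1)^nq^{3n^2+n(1-2k)}=q^{\binom{k}{2}}j(q^{4-2k};q^6)$, after recognizing the inner sum as a Jacobi theta function with base $q^6$. The key point is that this coefficient vanishes for $k\equiv2\pmod{3}$, since then $4-2k$ is a multiple of $6$ and (\ref{equation:1.8}) together with $j(1;q^6)=0$ forces $j(q^{4-2k};q^6)=0$. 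For $k=3m$ and $k=3m+1$, further application of (\ref{equation:1.8}) reduces $j(q^{4-2k};q^6)$ to $\pm J_2$ times an explicit power of $q$, and these match the coefficients of $x^{3m}$ and $x^{3m+1}$ in $J_2\bigl[j(qx^3;q^3)+xj(q^2x^3;q^3)\bigr]$.

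The main obstacle is the exponent bookkeeping in that last step: tracking signs and $q$-powers across the residue classes $k\equiv 0,1\pmod{3}$ requires care. The disappearance of the $k\equiv2\pmod{3}$ terms via $j(1;q^6)=0$ is the essential mechanism that makes the identity hold; everything else reduces to routine series manipulations together with the cited identities (\ref{equation:1.7})--(\ref{equation:1.10}).
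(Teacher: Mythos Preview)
Your argument is correct in every part. The series manipulations for (\ref{equation:H1Thm1.1}) are the standard ones, the deductions of (\ref{equation:H1Thm1.2A})--(\ref{equation:H1Thm1.2B}) by sign changes are valid, the algebraic reduction of the second equality in (\ref{equation:H1Thm1.0}) via (\ref{equation:1.9})--(\ref{equation:1.10}) checks out, and your coefficient computation for the quintuple product identity is accurate, including the vanishing at $k\equiv 2\pmod 3$ and the matching of the other two residue classes.

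As to comparison: the paper does not prove this proposition at all. It lists these identities in the preliminaries as known facts, remarking only that (\ref{equation:H1Thm1.0}) is the quintuple product identity. So you are supplying a self-contained proof where the paper simply quotes classical results. Your approach---direct expansion by Jacobi's triple product and parity/residue-class splitting---is the textbook route to these identities and is entirely appropriate; there is nothing to contrast methodologically, since the paper offers no method here.
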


\noindent Identity (\ref{equation:H1Thm1.0}) is the quintuple product identity. 

Finally, we recall a fact which follows immediately from \cite[Lemma $2$]{ASD} and is also \cite[Theorem $1.7$]{H1}.
\begin{proposition}\label{proposition:H1Thm1.7} Let $C$ be a nonzero complex number, and let $n$ be a nonnegative integer.  Suppose that $F(z)$ is analytic for $z\ne 0$ and satisfies $F(qz)=Cz^{-n}F(z)$.  Then either $F(z)$ has exactly $n$ zeros in the annulus $|q|<|z|\le 1$ or $F(z)=0$ for all $z$.
\end{proposition}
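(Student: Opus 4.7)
The plan is to apply the argument principle to $F$ on the closed annulus $A=\{z:|q|\le |z|\le 1\}$. First I would dispose of the degenerate case: if $F\equiv 0$ on any nonempty open subset of $\C^*$, then by the identity theorem $F\equiv 0$ everywhere on $\C^*$, so we are in the second alternative. Otherwise the zeros of $F$ are isolated, and since $A$ is compact and $F$ is analytic on a neighborhood of $A$, the number $N$ of zeros of $F$ in the half-open annulus $|q|<|z|\le 1$ (counted with multiplicity) is finite.

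Next I would choose the contours. After a small perturbation if necessary (replace the outer circle $|z|=1$ and the inner circle $|z|=|q|$ by $|z|=1+\eps$ and $|z|=|q|(1+\eps)$ for sufficiently small $\eps>0$, then let $\eps\to 0$), we may assume $F$ has no zeros on the two boundary circles $C_1:|z|=1$ and $C_q:|z|=|q|$. The argument principle then gives
\begin{equation*}
N=\frac{1}{2\pi \imag}\left(\oint_{C_1}\frac{F'(z)}{F(z)}\,dz-\oint_{C_q}\frac{F'(z)}{F(z)}\,dz\right),
\end{equation*}
where both circles are oriented counterclockwise.

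The key step is to relate the two integrals using the functional equation $F(qz)=Cz^{-n}F(z)$. Taking the logarithmic derivative yields
\begin{equation*}
\frac{qF'(qz)}{F(qz)}=-\frac{n}{z}+\frac{F'(z)}{F(z)}.
\end{equation*}
Substituting $w=qz$ parametrizes $C_q$ by $C_1$, and after the change of variables the integral of $F'/F$ over $C_q$ becomes the integral of $F'/F$ over $C_1$ minus $n\oint_{C_1}dz/z=2\pi\imag n$. Plugging back in cancels the $\oint_{C_1}F'/F$ terms and leaves $N=n$, completing the proof.

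The only real subtlety is the boundary perturbation — ensuring that $F$ has no zeros on $C_1$ or $C_q$ before invoking the argument principle — but this is handled routinely by the $\eps$-shift described above, together with the observation that zeros on the inner circle $|z|=|q|$ correspond via the functional equation to zeros on $|z|=1$, so a single $\eps$-shift suffices to clear both boundaries simultaneously.
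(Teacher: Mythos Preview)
Your argument-principle proof is correct and is the standard approach to this classical fact. Note, however, that the paper does not actually prove this proposition: it is simply recalled as a known result, with citations to Atkin--Swinnerton-Dyer \cite{ASD} (Lemma~2) and Hickerson \cite{H1} (Theorem~1.7), where essentially the same argument you give appears. So there is nothing to compare against beyond confirming that your proof matches the standard one in those references.

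One small remark on the perturbation step: your observation that zeros on $|z|=|q|$ correspond bijectively (with multiplicity) via the functional equation to zeros on $|z|=1$ is exactly right, and it is what makes the simultaneous $\eps$-shift work. An equivalent and slightly cleaner way to phrase it is that the number of zeros in any fundamental annulus $|q|r<|z|\le r$ is independent of $r>0$, so one may simply choose $r$ so that $|z|=r$ (and hence $|z|=|q|r$) avoids the discrete zero set of $F$.
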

We will need the following four identities, which appear to be new.

\begin{proposition}\label{proposition:hecke-phi-id}  Let $x\ne0.$  Then
\begin{equation}j(q^2x;q^4) j(q^5x;q^8)+\frac{q}{x}\cdot j(x;q^4) j(qx;q^8)-\frac{J_1}{J_4}\cdot j(-q^3x;q^4)j(q^3x;q^8)=0
\label{equation:hecke-phi-id}
\end{equation}
\end{proposition}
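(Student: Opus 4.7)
Let $F(x)$ denote the left-hand side of (\ref{equation:hecke-phi-id}). My plan is to apply Proposition \ref{proposition:H1Thm1.7}: establish a quasi-periodicity for $F$ that bounds the number of zeros in a fundamental annulus by $3$, then exhibit $4$ explicit zeros in that annulus to force $F \equiv 0$.

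First, using (\ref{equation:1.8}) at bases $q^4$ and $q^8$, I check that under $x \mapsto q^8 x$ each of the three terms of $F$ acquires the common scalar $-q^{-13} x^{-3}$. For the first term, $j(q^2 x; q^4) \mapsto q^{-8} x^{-2}\, j(q^2 x; q^4)$ (apply (\ref{equation:1.8}) twice in base $q^4$) and $j(q^5 x; q^8) \mapsto -q^{-5} x^{-1}\, j(q^5 x; q^8)$, with the analogous computations for the second and third terms yielding the same scalar. Thus
\begin{equation*}
F(q^8 x) = -q^{-13} x^{-3} F(x),
\end{equation*}
so by Proposition \ref{proposition:H1Thm1.7} (with $q$ replaced by $q^8$, $C = -q^{-13}$, and $n = 3$), either $F \equiv 0$ or $F$ has exactly $3$ zeros in the fundamental annulus $|q|^8 < |x| \le 1$.

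Next, I verify $F(1) = F(q^2) = F(q^3) = F(q^5) = 0$. At each test point, one of the three terms vanishes automatically because a $j$-factor becomes $j(1;q^4) = 0$, $j(q^4;q^4) = 0$, or $j(q^8;q^8) = 0$, and the remaining two-term identity collapses to a standard theta product identity drawn from Section \ref{section:thetaprelim}. For instance, at $x = 1$ the middle term dies and the surviving reduction is $J_1 \overline{J}_{3,4} = J_2^2$; at $x = q^2$ the first term dies and the two remaining reduce via $J_{2,4} = J_2^2/J_4$ together with $\overline{J}_{1,4} = J_2^2/J_1$; the cases $x = q^3$ and $x = q^5$ are handled by analogous reductions using these same identities and the symmetry $j(q^5;q^8) = j(q^3;q^8)$. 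Since these four points are distinct and all lie in $|q|^8 < |x| \le 1$, they contradict the upper bound of three zeros from Proposition \ref{proposition:H1Thm1.7}, forcing $F \equiv 0$. The main obstacle is nothing conceptual but the bookkeeping in the second step: each of the four cancellations requires evaluating six specialized theta factors using the formulas in Section \ref{section:thetaprelim} before the identity collapses cleanly, and one must be careful not to introduce spurious factors from repeated application of (\ref{equation:1.8}).
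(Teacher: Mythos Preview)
Your proof is correct and follows essentially the same argument as the paper: establish the functional equation $F(q^8x)=-q^{-13}x^{-3}F(x)$, invoke Proposition~\ref{proposition:H1Thm1.7}, and then exhibit four zeros in the fundamental annulus. The only difference is cosmetic---the paper checks the points $x=1,\,q^2,\,q^3,\,q^7$ while you use $x=1,\,q^2,\,q^3,\,q^5$; both choices work since at $x=q^5$ the third term vanishes and the remaining cancellation $J_{1,4}J_{2,8}=J_{1,4}J_{2,8}$ is immediate after applying~(\ref{equation:1.8}).
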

\begin{proof}[Proof of Proposition \ref{proposition:hecke-phi-id}]  Let $f(x)$ be the left hand side of (\ref{equation:hecke-phi-id}).  It satisfies $f(q^8x)=-q^{-13}x^{-3}f(x).$  By Proposition \ref{proposition:H1Thm1.7}, if $f$ has more than $3$ zeros in $|q^8|<|x|\le 1$, then $f(x)=0$ for all nonzero $x$.  But it is easy to check that $f(x)=0$ for $x=1, q^2, q^3, q^{7}$.
\end{proof}

\begin{proposition}\label{proposition:newgenid} Let $x\ne0.$  Then
\begin{equation}
J_{12}^3j(q^2x;q^3)j(-qx^2;q^6)+xJ_{12}^3j(qx;q^3)j(-q^5x^2;q^6)-J_{2,12}^2J_4j(-x;q^3)j(q^3x^2;q^6)=0\label{equation:newgenid}
\end{equation}
\end{proposition}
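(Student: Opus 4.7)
The plan is to mimic the proof of Proposition \ref{proposition:hecke-phi-id}: establish a theta-type functional equation for the LHS and then force it to vanish identically by exhibiting more zeros than Proposition \ref{proposition:H1Thm1.7} allows.

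Let $f(x)$ denote the LHS of (\ref{equation:newgenid}); it is holomorphic for $x\neq 0$. A term-by-term application of (\ref{equation:1.8}) in base $q^3$ (for the $j(\cdot;q^3)$ factors) and in base $q^6$ (for the $j(\cdot;q^6)$ factors), together with the identity $j(q^3z;q^3)=-z^{-1}j(z;q^3)$ and $j(q^6z;q^6)=-z^{-1}j(z;q^6)$, yields the functional equation
\begin{equation*}
f(q^3 x) = -q^{-3} x^{-3} f(x).
\end{equation*}
Since $|q^3|<1$, applying Proposition \ref{proposition:H1Thm1.7} with $q$ replaced by $q^3$ tells us that either $f$ has exactly three zeros in the annulus $|q^3|<|x|\leq 1$ or $f\equiv 0$. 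It therefore suffices to produce four distinct zeros of $f$ in this annulus, and I would take
\begin{equation*}
x\in\{-1,\ q,\ q^{3/2},\ -q^{3/2}\},
\end{equation*}
each chosen so that at least one of the three summands of $f(x)$ vanishes automatically.

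At $x=\pm q^{3/2}$ the third summand vanishes via $j(q^6;q^6)=0$; for the first and second summands, (\ref{equation:1.8}) reduces them to constant multiples of $j(\pm q^{1/2};q^3)\,\overline{J}_{2,6}$, and (\ref{equation:1.7}) in the form $j(q^{5/2};q^3)=j(q^{1/2};q^3)$ makes the two multiples cancel. At $x=-1$ the third summand vanishes via $j(1;q^3)=0$, and the symmetries $j(-q^2;q^3)=j(-q;q^3)$ and $j(-q^5;q^6)=j(-q;q^6)$ from (\ref{equation:1.7}) show the first two summands equal $\pm J_{12}^3\,\overline{J}_{1,3}\,\overline{J}_{1,6}$ with opposite signs.

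The main obstacle is $x=q$: here the first summand vanishes via $j(q^3;q^3)=0$, but cancellation of the remaining two requires the auxiliary theta identity
\begin{equation*}
J_{12}^3\, J_{1,3}\,\overline{J}_{1,6}=J_{2,12}^2\, J_4\,\overline{J}_{1,3}\, J_{1,6}.
\end{equation*}
Using the product formulas in Section \ref{section:thetaprelim}, together with the tautologies $J_{1,3}=J_1$, $J_{2,6}=J_2$, $J_{4,12}=J_4$ (from comparison of infinite products), this reduces to $J_{12}^4 J_2^2 = J_{2,12}^2 J_4^2 J_6^2$, which is an immediate consequence of $J_{2,12}=J_2 J_{12}^2/(J_4 J_6)$, a specialization of (\ref{equation:1.10}) applied to $j(q^2;q^6)$. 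Since the four points above are distinct and all lie in $|q^3|<|x|\leq 1$, we exceed the budget of three zeros and conclude $f\equiv 0$.
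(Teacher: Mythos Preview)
Your argument is correct and follows exactly the same route as the paper: verify the functional equation $f(q^3x)=-q^{-3}x^{-3}f(x)$, invoke Proposition~\ref{proposition:H1Thm1.7}, and exhibit more than three zeros in the fundamental annulus. The paper simply lists nine test points ($x=-1,\ \pm i q^{1/2},\ q,\ \pm q^{3/2},\ q^2,\ \pm q^{5/2}$) and asserts the vanishing is ``easy to check,'' whereas you economize to four points and actually carry out the nontrivial verification at $x=q$ by reducing it to $J_{2,12}=J_2J_{12}^2/(J_4J_6)$; both are valid, and your four points already suffice.
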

\begin{proof}[Proof of Proposition \ref{proposition:newgenid}]  Let $f(x)$ be the left hand side of (\ref{equation:newgenid}).  It satisfies $f(q^3x)=-q^{-3}x^{-3}f(x).$  By Proposition \ref{proposition:H1Thm1.7}, if $f$ has more than $3$ zeros in $|q^3|<|x|\le 1$, then $f(x)=0$ for all nonzero $x$.  But it is easy to check that $f(x)=0$ for $x=-1,$ $\pm i q^{1/2}$, $q$, $\pm q^{3/2}$, $q^2$, $\pm q^{5/2}$.
\end{proof}

\begin{proposition}\label{lemma:newids} We have 
\begin{align}
J_{3,6}J_{2,16}^2 -J_{4,8}J_{3,24}J_{1,8}&=qJ_{1,2}J_{2,8}\overline{J}_{24,96}\label{equation:And1.14},\\
-J_{3,6}J_{6,16}^2 +J_{4,8}J_{9,24}J_{3,8}&=q^3J_{1,2}J_{2,8}\overline{J}_{24,96}\label{equation:And1.15}.
\end{align}
\end{proposition}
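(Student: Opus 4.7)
My plan is to prove (\ref{equation:And1.14}) and (\ref{equation:And1.15}) in the same spirit as Propositions \ref{proposition:hecke-phi-id} and \ref{proposition:newgenid}, by introducing an auxiliary variable and invoking Proposition \ref{proposition:H1Thm1.7}. For each identity separately, I would take the left-minus-right hand side and replace one carefully chosen $q$-power by a free complex parameter $x$, producing a function $f(x)$ whose zero at a particular value of $x$ (namely $x=q$ for (\ref{equation:And1.14}) and $x=q^3$ for (\ref{equation:And1.15})) is exactly the desired identity. A natural first try for the first identity is $f(x) := J_{3,6}\,j(x^2;q^{16})^2 - J_{4,8}\,j(x^3;q^{24})\,j(x;q^8) - xJ_{1,2}J_{2,8}\,j(-x^{24};q^{96})$, which is already designed so that $f(q)=0$ is precisely (\ref{equation:And1.14}).

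The key step is to check that $f(x)$ is a theta-like function in $x$: all three terms must transform with the same leading power of $x$ under a common quasi-period $x\mapsto q^N x$. Using (\ref{equation:1.8}), the first two terms transform consistently, but the last term does not balance with the first two under the naive choice above — so some tuning of the $x$-dependence (for instance replacing $x^{24}$ by an appropriate monomial, or introducing a compensating prefactor via a theta quotient) will be needed to yield a genuine quasi-periodicity $f(q^Nx)=Cx^{-n}f(x)$ with explicit $n$ and $C$.

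Once such an $f$ is in hand, Proposition \ref{proposition:H1Thm1.7} guarantees that $f$ is either identically zero or has at most $n$ zeros in the annulus $|q^N|<|x|\le 1$. I would then exhibit $n+1$ explicit values of $x$ at which $f(x)=0$, obtained by choosing $x$ to make one or more theta factors vanish via $j(q^k;q^m)=0$ when $m\mid k$. This forces $f\equiv 0$, and in particular yields the identity at the desired evaluation point. The analogous construction (with suitable sign adjustments) handles (\ref{equation:And1.15}).

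The main obstacle is the balancing in step two: because the theta factors live at different bases ($q^6,q^8,q^{16},q^{24},q^{96}$) whose exponent ratios $2/16,\,3/24,\,1/8,\,24/96$ do not all coincide, the natural one-parameter extension has to be engineered very carefully so that the prefactor and all theta arguments share a common quasi-period. Hunting down the $n+1$ explicit zeros is usually mechanical once the right parameterization is found. Should the quasi-periodicity approach prove too delicate, a fallback would be to derive the two identities from Proposition \ref{proposition:newgenid} by a substitution $q\mapsto q^m$, $x\mapsto q^k$, and then consolidate products at a common base using Riemann's relation (Proposition \ref{proposition:CHcorollary}), the product identities (\ref{equation:H1Thm1.1})--(\ref{equation:H1Thm1.2B}), and the quintuple product identity (\ref{equation:H1Thm1.0}).
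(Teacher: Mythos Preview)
Your proposal is an outline rather than a proof, and the obstacle you yourself flag is the genuine gap. The three terms in your trial function
\[
f(x)=J_{3,6}\,j(x^2;q^{16})^2-J_{4,8}\,j(x^3;q^{24})\,j(x;q^8)-xJ_{1,2}J_{2,8}\,j(-x^{24};q^{96})
\]
do not share a common multiplier under any $x\mapsto q^{N}x$: under $x\mapsto q^{8}x$ the first two terms each pick up $q^{-16}x^{-4}$, while the third picks up $q^{-88}x^{-48}$. No monomial prefactor or simple change of the exponent $24$ repairs this, because the factor $\overline{J}_{24,96}$ sits at a genuinely different scale from the rest of the identity. So ``some tuning'' is not a cosmetic step; it is the whole problem, and you have not indicated what the correct parameterization is or that one exists. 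Your fallback (substitute into Proposition~\ref{proposition:newgenid} and massage with Propositions~\ref{proposition:CHcorollary} and (\ref{equation:H1Thm1.0})--(\ref{equation:H1Thm1.2B})) is too vague to count as a proof sketch: Proposition~\ref{proposition:newgenid} lives at bases $q^{3},q^{6},q^{12}$ and there is no evident specialization that produces the $q^{96}$ factor.

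For comparison, the paper does use Proposition~\ref{proposition:H1Thm1.7}, but not on a one-variable extension of the target identities. Instead it proves an auxiliary lemma consisting of two separate parametric identities (in a variable $z$, at bases $q^{2},q^{6},q^{8},q^{24}$), each of which \emph{does} satisfy a clean functional equation and is killed by the zero-counting argument. Specializing those at $z=1$ and $z=q^{12}$, and combining with the elementary consequences
\[
2q^{2}J_{2,16}^{2}=\overline{J}_{2,8}J_{4,8}-J_{2,8}\overline{J}_{4,8},\qquad
2J_{6,16}^{2}=\overline{J}_{2,8}J_{4,8}+J_{2,8}\overline{J}_{4,8}
\]
of (\ref{equation:H1Thm1.2A})--(\ref{equation:H1Thm1.2B}), yields (\ref{equation:And1.14}) and (\ref{equation:And1.15}). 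The point is that the awkward $\overline{J}_{24,96}$ arises only \emph{after} specialization (via $\overline{J}_{0,24}=2\overline{J}_{24,96}$), so it never has to be carried as a parametric theta factor. Your direct approach tries to parameterize it from the start, which is why the transformation law fails to close.
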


\noindent To prove these identities, we first give a lemma.
\begin{lemma}{\label{lemma:0}}
Let $z\ne0.$  Then
\begin{gather}
J_{12,24}j(q^4z^4;q^8)-j(q^3z^3;q^6)j(-q^4z^2;q^8)-q^3z^{-3}j(qz;q^2)j(-z^6;q^{24})=0\label{equation:partA},\\
J_{2,8}j(q^{12}z^2;q^{24})-\overline{J}_{2,8}j(q^3z;q^6)+q^2j(qz;q^8)j(q^3z^{-1};q^{24})+q^2j(qz^{-1};q^8)j(q^3z;q^{24})=0.\label{equation:partB}
\end{gather}
\end{lemma}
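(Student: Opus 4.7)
My plan is to prove both identities of Lemma \ref{lemma:0} by the Atkin--Swinnerton--Dyer style zero-counting argument of Proposition \ref{proposition:H1Thm1.7}. In each case, let $f(z)$ denote the left-hand side viewed as a function of $z \in \mathbb{C}^*$; since each summand is a finite product of theta functions, $f$ is analytic on $\mathbb{C}^*$.

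For (\ref{equation:partA}) I would first identify the natural multiplicative period of $f$. The factor $j(q^4 z^4; q^8)$ forces a rescaling of the form $z \mapsto q^k z$ with $k$ a multiple of $4$, and compatibility with the remaining factors $j(q^3z^3;q^6)$, $j(-q^4 z^2;q^8)$, $j(qz;q^2)$, $j(-z^6;q^{24})$ singles out the minimal choice $z \mapsto q^4 z$. A routine application of (\ref{equation:1.8}) to each summand then shows that all three pick up the common prefactor $q^{-16} z^{-8}$, giving $f(q^4 z) = q^{-16} z^{-8} f(z)$. By Proposition \ref{proposition:H1Thm1.7}, it then suffices to exhibit nine zeros of $f$ in the fundamental annulus $|q^4| < |z| \le 1$. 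The points $z = q$ and $z = q^3$ are obvious, as all three summands vanish there (from $j(q^6;q^6)$, $j(q^{12};q^6)$, $j(q^2;q^2)$ and $j(q^4;q^2)$). The remaining seven zeros I would seek among $z \in \{-q,\, -q^3,\, \pm i q,\, \pm i q^3\}$ together with the sixth-roots-of-unity points coming from $j(-z^6;q^{24})$; at each such point some summands vanish automatically and the rest must cancel, which I would verify using (\ref{equation:1.7})--(\ref{equation:1.11}) to reduce the surviving $j$-factors to a common multiple.

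Identity (\ref{equation:partB}) is attacked analogously. Here the mixed bases $q^6$, $q^8$, $q^{24}$ force the period up to $z \mapsto q^{24}z$, under which a direct check with (\ref{equation:1.8}) shows that all four summands scale by the common factor $q^{-48}z^{-4}$; thus one needs five zeros in $|q^{24}|<|z|\le 1$. The involution $z \mapsto z^{-1}$ is a useful structural aid: by (\ref{equation:1.7}) the first two summands are individually invariant while the last two are interchanged, so the zero-hunt can be confined to a fundamental domain of this involution.

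The main obstacle in both parts is the cancellation step at those candidate zeros where more than one summand is nonzero. At such points the required identity is not purely formal, and I expect to have to invoke Proposition \ref{proposition:CHcorollary} (the Riemann relation) or the quintuple product identity (\ref{equation:H1Thm1.0}) in order to surface a common theta factor. If this tactic proves unwieldy, a fallback is to split every summand of $f$ over the single base $q^{24}$ using (\ref{equation:jsplit}); each identity then reduces to a finite system of Riemann-type relations that can be verified directly.
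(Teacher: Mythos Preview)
Your plan is essentially the paper's own argument: the same functional equations $f(q^4z)=q^{-16}z^{-8}f(z)$ for (\ref{equation:partA}) and $f(q^{24}z)=q^{-48}z^{-4}f(z)$ for (\ref{equation:partB}), followed by Proposition~\ref{proposition:H1Thm1.7} and a finite zero check.  Two remarks on the execution.  For (\ref{equation:partA}) the paper takes precisely the eight points $z=\pm q,\pm iq,\pm q^3,\pm iq^3$ (the zeros of the \emph{first} summand) together with $z=iq^2$ (a zero of the second summand); with this choice every verification reduces to elementary product rearrangements such as $j(i;q)=(1-i)J_{1,4}$ and $j(iq;q^2)=J_{4,8}$, so your anticipated appeal to the Riemann relation or the quintuple product identity is unnecessary.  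For (\ref{equation:partB}) the paper's five test points are $z=q^3,q^9,q^{15},q^{21}$ (at each of which the second summand and one of the last two vanish, leaving a single product identity) and $z=q$, where the surviving three terms collapse after one application of (\ref{equation:jsplit}) with $m=2$; again no Riemann relation is needed.  Your $z\mapsto z^{-1}$ symmetry observation is correct but is not used in the paper's proof.
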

\begin{proof}[Proof of Lemma \ref{lemma:0}]We prove (\ref{equation:partA}).  Let $f(z)$ be the left hand side of (\ref{equation:partA}).  It satisfies $f(q^4z)=q^{-16}z^{-8}f(z).$ By Proposition \ref{proposition:H1Thm1.7}, if $f$ has more than $8$ zeros in $|q^4|<|z|\le 1$, then $f(z)=0$ for all nonzero $z$.  The first term in (\ref{equation:partA}) is zero for $z=\pm q$, $\pm iq$, $\pm q^3$, $\pm iq^3$, and the second term is zero for $z=iq^2$ (among others) so checking that $f(z)=0$ for these $9$ values just involves product rearrangements of the other two terms.  Here we use facts such as $j(i,q)=(1-i)J_{1,4}$ and $j(iq,q^2)=J_{4,8}.$

We prove (\ref{equation:partB}).  Let $f(z)$ be the left side of (\ref{equation:partB}); it satisfies $f(q^{24})z=q^{-48}z^{-4}f(z).$  So if it has more than $4$ zeros in $|q^{24}|<|z|\le 1,$ then $f(z)=0$ for all $z$.  It is easy to see that $f(z)=0$ for $z=q^3$, $q^9$, $q^{15}$, and $q^{21}$; in each case the second term and one of the last two terms in $f(z)$ is zero.  Also,
\begin{align*}
f(q)&=J_{2,8}J_{10,24}-\overline{J}_{2,8}J_2+q^2J_{2,8}J_{2,24}+0=J_{2,8}(J_{10,24}+q^2J_{2,24})-\overline{J}_{2,8}J_2\\
&=J_{2,8}j(-q^2;-q^6)-\overline{J}_{2,8}J_2=0.&(\text{by }(\ref{equation:jsplit}))
\end{align*}
So $f(z)=0$ for all $z.$
\end{proof}

\begin{proof}[Proof of Proposition \ref{lemma:newids}]  We will use the following two identities
\begin{align}
2q^{2}J_{2,16}^2&=\overline{J}_{2,8}J_{4,8}-J_{2,8}\overline{J}_{4,8}\label{equation:preidA},\\
2J_{6,16}^2&=\overline{J}_{2,8}J_{4,8}+J_{2,8}\overline{J}_{4,8}\label{equation:preidB},
\end{align}
which follow respectively from (\ref{equation:H1Thm1.2A}) and (\ref{equation:H1Thm1.2B})with $q\rightarrow q^8$, $x=q^2$, $y=q^{4}$, .  

We prove (\ref{equation:And1.14}).  Letting $z=1$ in (\ref{equation:partA}) gives
\begin{align}
0&=J_{12,24}J_{4,8}-J_{3,6}\overline{J}_{4,8}-q^3J_{1,2}\overline{J}_{0,24}=J_{12,24}J_{4,8}-J_{3,6}\overline{J}_{4,8}-2q^3J_{1,2}\overline{J}_{24,96}.\label{equation:And1.14A}
\end{align}
Multiply by $J_{2,8}$ and rearrange to obtain
\begin{align}
2q^3J_{2,8}J_{1,2}\overline{J}_{24,96}=J_{2,8}J_{12,24}J_{4,8}-J_{3,6}J_{2,8}\overline{J}_{4,8}.\label{equation:And1.14B}
\end{align}
Letting $z=1$ in (\ref{equation:partB}) gives
\begin{align}
J_{2,8}J_{12,24}-\overline{J}_{2,8}J_{3,6}+2q^2J_{1,8}J_{3,24}=0.\label{equation:And1.14C}
\end{align}
Multiply by $J_{4,8}$ and rearrange to obtain
\begin{align}
J_{2,8}J_{12,24}{J}_{4,8}=J_{3,6}\overline{J}_{2,8}J_{4,8}-2q^2{J}_{1,8}J_{3,24}J_{4,8}.\label{equation:And1.14D}
\end{align}
Substitute (\ref{equation:And1.14D}) into (\ref{equation:And1.14B}) and use (\ref{equation:preidA}):
\begin{align*}
2q^3J_{2,8}J_{1,2}\overline{J}_{24,96}&=J_{3,6}\overline{J}_{2,8}J_{4,8}-2q^2{J}_{1,8}J_{3,24}J_{4,8}-J_{3,6}J_{2,8}\overline{J}_{4,8}\\
&=J_{3,6}2q^2J_{2,16}^2-2q^2{J}_{1,8}J_{3,24}J_{4,8}.
\end{align*}
Dividing by $2q^2$ yields (\ref{equation:And1.14}).

We prove (\ref{equation:And1.15}).  Setting $z=q^{12}$ in (\ref{equation:partB}), we have
\begin{align}
J_{2,8}J_{36,24}-\overline{J}_{2,8}J_{15,6}+q^2J_{13,8}J_{-9,24}+q^2J_{-11,8}J_{15,25}=0,\label{equation:And1.15A}
\end{align}
which can be rewritten
\begin{align}
2J_{3,8}J_{9,24}=J_{2,8}J_{12,24}+\overline{J}_{2,8}J_{3,6}.\label{equation:And1.15B}
\end{align}
Multiply by $J_{4,8}$ and rearrange to obtain
\begin{align}
J_{2,8}J_{12,24}J_{4,8}=-J_{3,6}\overline{J}_{2,8}J_{4,8}+2J_{3,8}J_{9,24}J_{4,8}.\label{equation:And1.15C}
\end{align}
Substitute this into (\ref{equation:And1.14B}) and use (\ref{equation:preidB}):
\begin{align*}
2q^3J_{2,8}J_{1,2}\overline{J}_{24,96}&=-J_{3,6}\overline{J}_{2,8}J_{4,8}+2J_{3,8}J_{9,24}J_{4,8}-J_{3,6}J_{2,8}\overline{J}_{4,8}\\
&=-J_{3,6}2J_{6,16}^2+2J_{3,8}J_{9,24}J_{4,8}.
\end{align*}
Dividing by $2$ gives (\ref{equation:And1.15}).
\end{proof}


\subsection{Properties of the Appell-Lerch sums}\label{section:prop-mxqz}

The Appell-Lerch sum $m(x,q,z)$ satisfies several functional equations and identities, which we collect in the form of a proposition. 

\begin{proposition}  For generic $x,z,z_0,z_1\in \mathbb{C}^*$
\begin{subequations}
\begin{equation}
m(x,q,z)=m(x,q,qz),\label{equation:m-fnq-z}
\end{equation}
\begin{equation}
m(x,q,z)=x^{-1}m(x^{-1},q,z^{-1}),\label{equation:m-fnq-flip}
\end{equation}
\begin{equation}
m(qx,q,z)=1-xm(x,q,z),\label{equation:m-fnq-x}
\end{equation}
\begin{equation}
m(x,q,z)=1-q^{-1}xm(q^{-1}x,q,z)\label{equation:m-fnq-x-alt1},
\end{equation}
\begin{equation}
m(x,q,z)=x^{-1}-x^{-1}m(qx,q,z), \label{equation:m-fnq-x-alt2}
\end{equation}
\begin{equation}
m(x,q,z_1)-m(x,q,z_0)=\frac{z_0J_1^3j(z_1/z_0;q)j(xz_0z_1;q)}{j(z_0;q)j(z_1;q)j(xz_0;q)j(xz_1;q)},\label{equation:m-change-z}
\end{equation}
\end{subequations}
\end{proposition}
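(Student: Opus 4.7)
The plan is to verify parts (a)--(e) by elementary manipulations of the defining series \eqref{equation:mdef-eq} combined with the theta identities of Section \ref{section:thetaprelim}, and to prove (f) by a theta-function argument based on matching quasi-periodicity on a torus.

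For (a), I would replace $z$ by $qz$ in \eqref{equation:mdef-eq}, apply $j(qz;q)=-z^{-1}j(z;q)$ from \eqref{equation:1.8}, shift $r\mapsto r+1$ in the sum, and use $\binom{r+1}{2}=\binom{r}{2}+r$ to recover the original series. For (b), substitute $(x,z)\mapsto(x^{-1},z^{-1})$, rewrite $(1-q^{r-1}x^{-1}z^{-1})^{-1}=-q^{1-r}xz(1-q^{1-r}xz)^{-1}$, substitute $r\mapsto 2-r$, and use \eqref{equation:1.7} together with $\binom{2-r}{2}+(1-r)=\binom{r}{2}$. For (c), plug
\begin{equation*}
\frac{x}{1-q^{r-1}xz}=\frac{q^{1-r}}{z}\Bigl(\frac{1}{1-q^{r-1}xz}-1\Bigr)
\end{equation*}
into $xm(x,q,z)$ and reindex $r\mapsto r+1$ using $\binom{r}{2}+1-r=\binom{r-1}{2}$; one piece becomes $-m(qx,q,z)$, while the other, via the Jacobi triple product for $j(z;q)$, equals $+1$. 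Identities (d) and (e) are then (c) with $x\mapsto q^{-1}x$ and (c) solved for $m(x,q,z)$, respectively.

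For (f), set
\begin{equation*}
F(z_1):=j(z_1;q)\,j(xz_1;q)\bigl[m(x,q,z_1)-m(x,q,z_0)\bigr],
\end{equation*}
\begin{equation*}
R(z_1):=\frac{z_0J_1^3\,j(z_1/z_0;q)\,j(xz_0z_1;q)}{j(z_0;q)\,j(xz_0;q)};
\end{equation*}
the goal is $F\equiv R$, which is (f) after dividing through by $j(z_1;q)\,j(xz_1;q)$. The theta factors cancel the simple poles of $m(x,q,z_1)$ at $z_1\in q^{\Z}\cup q^{\Z}/x$, so $F$ is analytic on $\mathbb{C}^*$. Using (a) and \eqref{equation:1.8}, one checks $F(qz_1)=(xz_1^2)^{-1}F(z_1)$ and similarly $R(qz_1)=(xz_1^2)^{-1}R(z_1)$, so $F/R$ descends to a meromorphic function on the torus $\mathbb{C}^*/q^{\Z}$. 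Both $F$ and $R$ have simple zeros at $z_1=z_0$, so $F/R$ is regular there.

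The main obstacle is locating the second zero of $F$. In a fundamental annulus $R$ has its only other zero at $z_1=1/(xz_0)$; if $F$ failed to vanish there, then $F/R$ would be a non-constant elliptic function with a single simple pole per period, contradicting the vanishing of the sum of residues on a torus. Hence $F/R$ is holomorphic on the torus, and therefore constant. To pin down this constant I specialize to $z_1=1$: \eqref{equation:Reciprocal} (with $z$ replaced by $x$ and a summation shift) gives $\lim_{z_1\to 1}j(z_1;q)\,m(x,q,z_1)=-J_1^3/j(x;q)$, yielding $F(1)=-J_1^3$; meanwhile \eqref{equation:1.7} gives $R(1)=-J_1^3$ as well. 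Thus $F\equiv R$, proving (f).
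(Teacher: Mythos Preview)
Your argument is correct. The paper itself does not prove this proposition; it states that ``the proofs are straightforward and will be omitted'' and points to \cite{L1,L2,Zw}. Your derivations of (a)--(e) are exactly the kind of direct series manipulations the paper has in mind, and each step checks out.

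For (f) your elliptic-function argument is also sound. A couple of minor remarks on the write-up: when you say ``both $F$ and $R$ have simple zeros at $z_1=z_0$'', all you actually need (and all you have shown) is that $F$ vanishes at $z_0$ while $R$ has a \emph{simple} zero there; that suffices for $F/R$ to be regular at $z_0$, and if $F$ happened to vanish to higher order the argument would only improve. Your residue-theorem step is correct: an elliptic function cannot have a single simple pole in a fundamental domain, so $F/R$ is holomorphic on the torus and hence constant. The evaluation at $z_1=1$ via \eqref{equation:Reciprocal} and \eqref{equation:1.7} is right; the limit $\lim_{z_1\to1}j(z_1;q)m(x,q,z_1)=-J_1^3/j(x;q)$ comes out exactly as you say after the shift $n=r-1$. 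One could alternatively prove (f) by a direct partial-fraction computation with \eqref{equation:Reciprocal}, but your approach via Proposition~\ref{proposition:H1Thm1.7}-style quasi-periodicity matching is cleaner and in the spirit of the paper's other theta identities.
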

\noindent   The proofs are straightforward and will be omitted.  Although one can find most of these in \cite{L1,L2}, these papers are hard to obtain.   In addition, the German summary \cite{L2} has a few typos.  The equivalent of (\ref{equation:m-change-z}), for example, reads
\begin{equation*}
m(x,q,z_1)=m(x,q,z_0)=\frac{z_0J_1^3j(z_1/z_0;q)j(xz_0z_1;q)}{j(z_0;q)j(z_1;q)j(xz_0;q)j(xz_1;q)}.
\end{equation*}
A modern list of Appell-Lerch sum properties with proofs can be found in \cite{Zw}.

We recall a useful result:
\begin{theorem} [\cite{HM}, Theorem $3.6$]\label{theorem:msplit} For generic $x,z,z'\in \mathbb{C}^*$ 
\begin{align*}
m(&x,q,z) = \sum_{r=0}^{n-1} q^{{-\binom{r+1}{2}}} (-x)^r m\big(-q^{{\binom{n}{2}-nr}} (-x)^n, q^{n^2}, z' \big)\notag\\
& + \frac{z' J_n^3}{j(xz;q) j(z';q^{n^2})}  \sum_{r=0}^{n-1}
\frac{q^{{\binom{r}{2}}} (-xz)^r
j\big(-q^{{\binom{n}{2}+r}} (-x)^n z z';q^n\big)
j(q^{nr} z^n/z';q^{n^2})}
{ j\big(-q^{{\binom{n}{2}}} (-x)^n z', q^r z;q^n\big )}.
\end{align*}
\end{theorem}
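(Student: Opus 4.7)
The plan is to prove Theorem \ref{theorem:msplit} by treating the claimed identity as an equality of meromorphic functions of $z$ (with $x, q, z'$ as generic parameters), and pinning down both sides via their quasi-periodicity under $z\mapsto qz$, their pole locations, and their residues. The uniqueness of theta-like functions of prescribed transformation behavior, as codified in Proposition \ref{proposition:H1Thm1.7}, would then force equality.

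Concretely, I would let $H(z)$ denote the difference of the two sides multiplied by $j(xz;q)\,\prod_{r=0}^{n-1} j(q^r z;q^n)$ to clear denominators. Using the functional equation (\ref{equation:m-fnq-z}) together with $j(qz;q) = -z^{-1} j(z;q)$, one computes that $H$ satisfies $H(qz) = C\,z^{-N} H(z)$ for an explicit constant $C$ and integer $N$, so that $H$ transforms as a theta-type function of prescribed quasi-periodicity. Since each $m(-q^{\binom{n}{2}-nr}(-x)^n, q^{n^2}, z')$ on the right is $z$-independent, the $z$-poles of the claimed identity come entirely from $m(x,q,z)$ on the left (simple poles at $xz = q^{1-r}$, $r \in \Z$, with residues read off from (\ref{equation:mdef-eq})) and from the zeros of $j(xz;q)\,j(q^r z;q^n)$ on the right; these pole sets are easily checked to coincide.

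The main technical step, and the principal obstacle, is the residue computation. At $z = 1/x$ for instance, the residue of $m(x,q,z)$ is $-1/(x\,j(x;q))$ via the $r=1$ term of (\ref{equation:mdef-eq}) together with the identity $j(1/x;q) = -j(x;q)/x$ from (\ref{equation:1.7}). On the right side the corresponding residue comes from the single term in the finite sum for which $j(q^r z;q^n)$ vanishes at $z = 1/x$, and extracting it requires the splitting identity (\ref{equation:jsplit}), the quintuple product (\ref{equation:H1Thm1.0}), and Proposition \ref{proposition:CHcorollary} to collapse the resulting theta products. Once residues are matched at a single pole, the matching at the remaining poles $z = q^{1-r}/x$ follows from the quasi-periodicity, and then $H$ is forced to have more zeros than its transformation data allows, whence $H \equiv 0$ by Proposition \ref{proposition:H1Thm1.7}.

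As sanity checks, the case $n = 1$ collapses to the change-of-$z$ formula (\ref{equation:m-change-z}) and the case $n = 2$ should be verifiable by direct expansion. An alternative, more computational route is to split the summation index $r = ns + t$ in (\ref{equation:mdef-eq}) using $\binom{ns+t}{2} = n^2\binom{s}{2} + \binom{n}{2}s + nst + \binom{t}{2}$, and then apply the partial-fraction expansion (\ref{equation:Reciprocal}) at parameter $q^n$ to redistribute each inner denominator $1 - q^{ns+t-1}xz$ into level-$q^{n^2}$ Appell-Lerch pieces plus theta corrections, introducing the auxiliary parameter $z'$ in the process. This recovers the same identity but is combinatorially heavier, particularly in tracking the signs $(-1)^{nr}$, the exponents $q^{\binom{n}{2}s+nst}$, and the matching of the residual theta product with the explicit form $j\big(-q^{\binom{n}{2}+r}(-x)^n zz';q^n\big)\,j(q^{nr}z^n/z';q^{n^2})/j\big(-q^{\binom{n}{2}}(-x)^n z';q^n\big)$ in the theorem.
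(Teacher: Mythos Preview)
The paper does not prove this statement: Theorem~\ref{theorem:msplit} is quoted from \cite{HM} (Theorem~3.6 there) without proof, and is used here only through its $n=2$ specializations in Corollaries~\ref{corollary:msplitn2zprime} and \ref{corollary:msplitn2}. There is therefore no proof in the paper against which to compare yours.

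On the substance of your first approach, the outline is standard but has two gaps. First, $m(x,q,z)$ has simple poles not only at $xz\in q^{\Z}$ (from the summands of (\ref{equation:mdef-eq})) but also at $z\in q^{\Z}$ (from the prefactor $1/j(z;q)$), with residue $1/j(x;q)$ at $z=1$ via (\ref{equation:Reciprocal}); you must also match this second family against the poles of the correction term arising from the factors $j(q^r z;q^n)$. Second, even after both families are matched, the zero count does not close: by (\ref{equation:1.10}) your clearing factor equals $(J_n^n/J_1)\,j(xz;q)\,j(z;q)$, so $H(qz)=x^{-1}z^{-2}H(z)$ and hence $N=2$ in Proposition~\ref{proposition:H1Thm1.7}, while the clearing factor contributes exactly two zeros in the fundamental annulus---not \emph{more} than $N$. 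The repair is routine (a holomorphic function on $\C^*$ satisfying $D(qz)=D(z)$ is constant, and one further evaluation pins that constant to zero), but it is a missing step in your outline. Your alternative route via $r=ns+t$ and (\ref{equation:Reciprocal}) is the more direct computational argument and is presumably how the identity is established in \cite{HM}.
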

\noindent Identity (\ref{equation:1.8}) easily yields two $n=2$ specializations:

\begin{corollary} \label{corollary:msplitn2zprime} For generic $x,z,z'\in \mathbb{C}^*$ 
{\allowdisplaybreaks \begin{align}
m&(x,q,z)=m (-qx^2,q^4,z' )-q^{-1}xm(-q^{-1}x^2,q^4,z')\\
&+\frac{z'J_2^3}{j(xz;q)j(z';q^4)}\Big [
\frac{j(-qx^2zz';q^2)j(z^2/z';q^{4})}{j(-qx^2z',z;q^2)}-xz \frac{j(-q^2x^2zz';q^2)j(q^2z^2/z';q^{4})}{j(-qx^2z',qz;q^2)}\Big ].\notag
\end{align}}
\end{corollary}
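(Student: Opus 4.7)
The plan is to directly specialize Theorem \ref{theorem:msplit} to $n=2$ and to carry out the bookkeeping; no new algebraic input beyond the master formula is needed. With $n=2$ the exponent $\binom{n}{2}$ becomes $1$, both inner sums collapse to two summands indexed by $r=0,1$, and the powers $(-x)^n$ reduce to $x^2$. The entire task is then to expand each of these four terms and match them to the four pieces on the right-hand side of the corollary.

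For the Appell-Lerch part I would separately evaluate the $r=0$ and $r=1$ contributions to
\[
\sum_{r=0}^{1} q^{-\binom{r+1}{2}}(-x)^r\, m\bigl(-q^{\binom{2}{2}-2r}(-x)^2,q^4,z'\bigr).
\]
At $r=0$ the prefactor $q^{-\binom{1}{2}}(-x)^0$ equals $1$ and the argument is $-qx^2$, so this term is $m(-qx^2,q^4,z')$. At $r=1$ the prefactor $q^{-\binom{2}{2}}(-x)^1$ equals $-q^{-1}x$ and the argument is $-q^{-1}x^2$, so this term is $-q^{-1}x\,m(-q^{-1}x^2,q^4,z')$. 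Together these reproduce the two Appell-Lerch terms in the statement.

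For the theta-quotient correction I would again split into $r=0$ and $r=1$ under the common prefactor $z'J_2^3/\bigl(j(xz;q)j(z';q^4)\bigr)$. The outer coefficient $q^{\binom{r}{2}}(-xz)^r$ gives $1$ at $r=0$ and $-xz$ at $r=1$. The numerator $j\bigl(-q^{\binom{2}{2}+r}(-x)^2zz';q^2\bigr)\,j(q^{2r}z^2/z';q^4)$ simplifies to $j(-qx^2zz';q^2)\,j(z^2/z';q^4)$ at $r=0$ and $j(-q^2x^2zz';q^2)\,j(q^2z^2/z';q^4)$ at $r=1$. The denominator $j\bigl(-q(-x)^2z',\,q^r z;q^2\bigr)$ becomes $j(-qx^2z',z;q^2)$ and $j(-qx^2z',qz;q^2)$ respectively. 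Summing these two contributions matches the bracketed expression.

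Since the proof is pure specialization, I do not foresee any real obstacle; the only care needed is with the signs coming from $(-x)^1=-x$ and $(-xz)^1=-xz$ and with the single negative power $q^{-1}$ that appears at $r=1$. Identity \eqref{equation:1.8} is not required for this form of the specialization; I interpret the remark preceding the corollary as indicating that \eqref{equation:1.8} can be used to obtain an analogous ``twin'' $n=2$ reduction (one in which $z$ and $z'$ play symmetric roles), which I do not pursue here since only the stated form is needed.
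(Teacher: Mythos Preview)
Your proposal is correct and is exactly the paper's approach: a direct $n=2$ specialization of Theorem~\ref{theorem:msplit}, with the $r=0$ and $r=1$ terms read off verbatim. One small clarification: the ``two $n=2$ specializations'' mentioned before the corollary are Corollaries~\ref{corollary:msplitn2zprime} and~\ref{corollary:msplitn2}, not a $z\leftrightarrow z'$ twin, and you are right that \eqref{equation:1.8} is not actually needed for the form stated in Corollary~\ref{corollary:msplitn2zprime}.
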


\begin{corollary} \label{corollary:msplitn2} For generic $x,z\in \mathbb{C}^*$ 
\begin{align}
m(x,q,z)&=m (-qx^2,q^4,-1 )-q^{-1}xm(-q^{-1}x^2,q^4,-1)\\
&\ \ \ -\frac{J_2^3}{j(xz;q)j(qx^2;q^2)\overline{J}_{0,4}}\Big [
\frac{j(qx^2z;q^2)j(-z^2;q^{4})}{j(z;q^2)}-xz \frac{j(q^2x^2z;q^2)j(-q^2z^2;q^{4})}{j(qz;q^2)}\Big ].\notag
\end{align}
\end{corollary}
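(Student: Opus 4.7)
The plan is to specialize Corollary \ref{corollary:msplitn2zprime} at $z' = -1$ and then simplify; essentially no new ideas are needed, only careful bookkeeping.

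First I would substitute $z' = -1$ directly into the statement of Corollary \ref{corollary:msplitn2zprime}. The first two Appell--Lerch terms immediately become $m(-qx^2, q^4, -1)$ and $-q^{-1}x\,m(-q^{-1}x^2, q^4, -1)$, matching the right-hand side of Corollary \ref{corollary:msplitn2}. The prefactor $z'J_2^3 / [j(xz;q)\,j(z';q^4)]$ becomes $-J_2^3/[j(xz;q)\,\overline{J}_{0,4}]$, using the notation $j(-1;q^4) = \overline{J}_{0,4}$.

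Next I would simplify the two theta-quotient terms inside the brackets. Under $z' = -1$, the signs in the numerator theta products collapse:
\begin{align*}
j(-qx^2 z z'; q^2) &= j(qx^2 z; q^2), & j(z^2/z'; q^4) &= j(-z^2; q^4),\\
j(-q^2 x^2 z z'; q^2) &= j(q^2 x^2 z; q^2), & j(q^2 z^2/z'; q^4) &= j(-q^2 z^2; q^4).
\end{align*}
The denominators, when expanded via $j(a, b; q^2) = j(a; q^2)\,j(b; q^2)$, become $j(qx^2; q^2)\,j(z; q^2)$ and $j(qx^2; q^2)\,j(qz; q^2)$ respectively. Factoring the common $j(qx^2; q^2)^{-1}$ out of the bracket produces precisely the displayed expression in Corollary \ref{corollary:msplitn2}.

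There is no serious obstacle here; the only thing to watch is the parsing of the multi-argument theta notation $j(\,\cdot\,, \cdot\,; q^2)$ as a product, so that the $j(qx^2;q^2)$ factor can be pulled cleanly in front of the bracket. Once this is done, collecting the overall minus sign from the prefactor $z' = -1$ yields the stated identity verbatim. If one preferred a self-contained derivation instead, the same computation can be performed starting from Theorem \ref{theorem:msplit} with $n = 2$ and $z' = -1$, using identity (\ref{equation:1.8}) to rewrite the $j\bigl(-q^{\binom{n}{2}-nr}(-x)^n; q^{n^2}\bigr)$ factors in the finite sum; this is exactly the route already invoked to deduce Corollary \ref{corollary:msplitn2zprime} from Theorem \ref{theorem:msplit}.
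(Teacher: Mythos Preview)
Your proposal is correct and matches the paper's approach: the paper states that both Corollaries \ref{corollary:msplitn2zprime} and \ref{corollary:msplitn2} are the $n=2$ specializations of Theorem \ref{theorem:msplit} obtained via identity (\ref{equation:1.8}), so Corollary \ref{corollary:msplitn2} is precisely the $z'=-1$ case of Corollary \ref{corollary:msplitn2zprime}, exactly as you derive it. Your bookkeeping on the theta factors, the sign from $z'=-1$, and the extraction of the common $j(qx^2;q^2)$ from the multi-argument theta notation is all accurate.
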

 
We recall an identity \cite[Proposition $4.2$]{HM}, \cite[Theorem $2.2$]{H1}, which expresses the universal mock theta function in terms of Appell-Lerch sums:

\begin{proposition}  \label{proposition:HM-ID4.5} For generic $x,z\in\mathbb{C}^*$
\begin{align}
g(x,q)=-x^{-2}m(qx^{-3},q^3,x^3z)-x^{-1}m(q^2x^{-3},q^3,x^3z)+\frac{J_1^2j(xz;q)j(z;q^3)}{j(x;q)j(z;q)j(x^3z;q^3)}.
\end{align}
\end{proposition}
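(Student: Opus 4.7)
The plan is to deduce Proposition \ref{proposition:HM-ID4.5} directly from identity (\ref{equation:g-to-m}), which is the specialization $z = x^2$, by transporting both Appell-Lerch sums to the general parameter $z = x^3 z$ via the change-of-$z$ formula (\ref{equation:m-change-z}). Applied with $(z_0,z_1) = (x^2, x^3 z)$ and base $q^3$, that formula gives for $i \in \{1,2\}$
\begin{equation*}
m(q^i x^{-3},q^3,x^3 z) - m(q^i x^{-3},q^3,x^2) = \frac{x^2 J_3^3 \, j(xz;q^3)\, j(q^i x^2 z;q^3)}{j(x^2;q^3)\, j(x^3 z;q^3)\, j(q^i x^{-1};q^3)\, j(q^i z;q^3)}.
\end{equation*}
Substituting back into (\ref{equation:g-to-m}) and using (\ref{equation:1.7}) to replace $j(q^i x^{-1};q^3)$ by $j(q^{3-i} x;q^3)$, the proposition reduces to showing that the resulting sum of two theta quotients agrees with the target $J_1^2\, j(xz;q)\, j(z;q^3)/[j(x;q)\, j(z;q)\, j(x^3 z;q^3)]$.

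I would then combine the two theta quotients over the common denominator $j(qx,q^2 x,qz,q^2 z;q^3)$. Two applications of (\ref{equation:1.10}) with $n=3$, namely $j(qx,q^2 x;q^3) = J_3^3\, j(x;q)/[J_1\, j(x;q^3)]$ and its analogue for $z$, convert this denominator so that it matches the $j(x;q)\, j(z;q)$ factors on the target side. Applying (\ref{equation:1.10}) once more to rewrite $j(xz;q) = J_1\, j(xz,qxz,q^2 xz;q^3)/J_3^3$ and cancelling the common $j(xz;q^3)$, the proposition collapses to the pure theta identity
\begin{equation*}
j(x;q^3) \bigl[j(qx;q^3)\, j(qx^2 z;q^3)\, j(q^2 z;q^3) + x \, j(q^2 x;q^3)\, j(q^2 x^2 z;q^3)\, j(qz;q^3)\bigr] = J_1\, j(x^2;q^3)\, j(qxz;q^3)\, j(q^2 xz;q^3).
\end{equation*}

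The main obstacle is this last identity, which I expect to prove by applying the Riemann relation (Proposition \ref{proposition:CHcorollary}) in base $q^3$ with the parameters $a = q^{1/2}x$, $b = q^{3/2}xz$, $c = q^{1/2}$, $d = q^{1/2}/x$. These produce $(ac,a/c,bd,b/d) = (qx,x,q^2 z,qx^2 z)$, $(ad,a/d,bc,b/c) = (q,x^2,q^2 xz,qxz)$, and $(ab,a/b,cd,c/d) = (q^2 x^2 z,1/(qz),q/x,x)$, with prefactor $b/c = qxz$. After using (\ref{equation:1.7}) to simplify $j(1/(qz);q^3) = -(qz)^{-1} j(qz;q^3)$ and $j(q/x;q^3) = j(q^2 x;q^3)$, and recognising the hidden factor $j(q;q^3) = (q;q^3)_\infty (q^2;q^3)_\infty (q^3;q^3)_\infty = J_1$, the Riemann relation yields exactly the required identity. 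Locating the correct parameterization is really the only subtle step; the remaining reductions are routine theta-function bookkeeping.
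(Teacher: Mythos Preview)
Your argument is correct. The change-of-$z$ computation is right, the reductions via (\ref{equation:1.10}) are carried out properly, and the final theta identity does follow from Proposition~\ref{proposition:CHcorollary} with exactly the parameters you chose: in base $q^3$ with $a=q^{1/2}x$, $b=q^{3/2}xz$, $c=q^{1/2}$, $d=q^{1/2}/x$ one gets
\[
j(qx,x,q^2z,qx^2z;q^3)=J_1\,j(x^2,q^2xz,qxz;q^3)-x\,j(q^2x^2z,qz,q^2x,x;q^3),
\]
after simplifying $j(q;q^3)=J_1$, $j(1/(qz);q^3)=-(qz)^{-1}j(qz;q^3)$, and $j(q/x;q^3)=j(q^2x;q^3)$, which rearranges to precisely your target identity.

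As for comparison with the paper: there is nothing to compare. The paper does not prove Proposition~\ref{proposition:HM-ID4.5}; it is simply recalled from \cite[Proposition~4.2]{HM} and \cite[Theorem~2.2]{H1}. Your proof therefore supplies a self-contained derivation where the paper only cites, and it does so economically by bootstrapping from the special case (\ref{equation:g-to-m}) via (\ref{equation:m-change-z}) and a single Riemann relation. One small remark: you are assuming (\ref{equation:g-to-m}) as input, which in the paper's narrative is itself attributed to the same external source; but since the paper explicitly says (\ref{equation:g-to-m}) is an easy consequence of \cite[Theorem~2.2]{H1}, there is no circularity.
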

\noindent Taking the limit $z\rightarrow 1$ yields the following corollary:
\begin{corollary}\label{proposition:g-to-m} For generic $x\in\mathbb{C}^*$
\begin{align}
g(x,q)&=-x^{-1}m(q^2x^{-3},q^3,x^3)-x^{-2}m(qx^{-3},q^3,x^3)+\frac{J_3^3}{J_1j(x^3;q^3)}.\label{equation:newgid}
\end{align}
\end{corollary}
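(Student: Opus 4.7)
The plan is to deduce the corollary as the $z\to 1$ specialization of Proposition~\ref{proposition:HM-ID4.5}. The two Appell-Lerch summands $m(qx^{-3},q^3,x^3z)$ and $m(q^2x^{-3},q^3,x^3z)$ are continuous at $z=1$ for generic $x$: the genericity condition in Definition~\ref{definition:mdef} for $m(\cdot,q^3,\cdot)$ requires that $x^3z$, $qz$, and $q^2z$ avoid integral powers of $q^3$, and at $z=1$ these become $x^3$, $q$, $q^2$, which are all legitimate (the first for generic $x$, the other two since $1,2 \notin 3\mathbb{Z}$). Thus these two terms pass directly to the two $m$-terms appearing in the corollary.

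The substance of the proof is the third summand
\[ \frac{J_1^2 \, j(xz;q)\, j(z;q^3)}{j(x;q)\, j(z;q)\, j(x^3z;q^3)}, \]
which is $0/0$ at $z=1$ because $j(1;q)=0=j(1;q^3)$. I will compute its limit by extracting the simple zero of $j(\,\cdot\,;q)$ at $z=1$. From $(z;q)_\infty=(1-z)(qz;q)_\infty$ one obtains
\[ j(z;q) = (1-z)(qz;q)_\infty (q/z;q)_\infty (q;q)_\infty \sim (1-z)\,J_1^3 \quad (z\to 1), \]
and an identical computation with $q$ replaced by $q^3$ yields $j(z;q^3) \sim (1-z)\,J_3^3$. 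The remaining theta factors $j(xz;q)$ and $j(x^3z;q^3)$ are continuous at $z=1$ and tend to $j(x;q)$ and $j(x^3;q^3)$, respectively. After canceling the $(1-z)$ factors in numerator and denominator, together with the $j(x;q)$, the third summand tends to $J_3^3 / \bigl(J_1\, j(x^3;q^3)\bigr)$, which is exactly the theta quotient in the corollary.

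The only real obstacle is recognizing that the two simple zeros at $z=1$ cancel with the correct leading coefficients $J_1^3$ and $J_3^3$; once this observation is in hand, the calculation is routine. No additional theta-function identity beyond the product form of $j(\,\cdot\,;\,\cdot\,)$ is required.
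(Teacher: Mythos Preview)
Your proposal is correct and is exactly the approach the paper takes: the paper states only ``Taking the limit $z\rightarrow 1$ yields the following corollary,'' and you have supplied the details of that limit, namely that the $m$-terms pass through unchanged for generic $x$ while the theta quotient resolves via $j(z;q)\sim(1-z)J_1^3$ and $j(z;q^3)\sim(1-z)J_3^3$ to give $J_3^3/\bigl(J_1\,j(x^3;q^3)\bigr)$.
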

The following identity for $g(x,q)$ can be found in the lost notebook. 
\begin{proposition}{\cite[p. $32$]{RLN}, \cite[$(12.5.3)$]{ABI}} \label{theorem:gsplit} For generic $x$
\begin{equation}
g(x,q)=-x^{-1}+qx^{-3}g(-qx^{-2},q^4)-qg(-qx^2,q^4)+\frac{J_2J_{2,4}^2}{xj(x;q)j(-qx^2;q^2)}.
\end{equation}
\end{proposition}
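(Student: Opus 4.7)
The strategy is to convert every $g$-function in the proposed identity to an Appell--Lerch expansion, match the resulting $m$-sums, and reduce to a pure theta function identity. First, apply equation~(\ref{equation:g-to-m}) on the left to write
\[
g(x,q)=-x^{-1}m(q^2x^{-3},q^3,x^2)-x^{-2}m(qx^{-3},q^3,x^2),
\]
and apply~(\ref{equation:g-to-m}) with $q$ replaced by $q^4$ on the right to expand $g(-qx^2,q^4)$ and $g(-qx^{-2},q^4)$ as combinations of four Appell--Lerch sums of base $q^{12}$ with $z$-parameters $q^2x^{\pm 4}$.

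Next, bridge the gap between bases $q^3$ and $q^{12}$ by applying Corollary~\ref{corollary:msplitn2zprime} (the $n=2$ case of Theorem~\ref{theorem:msplit}) with $q\mapsto q^3$ and $z'=q^2x^4$ to each left-hand $m(\cdot,q^3,x^2)$. This produces four $m(\cdot,q^{12},q^2x^4)$ terms on the left together with explicit theta residuals $T_1,T_2$. With this choice of $z'$, two of the four resulting $m$-sums on the left match directly with two of the four $m$-sums on the right (those with $z$-parameter $q^2x^4$). For the remaining two left-hand $m$-sums, apply the functional equations in sequence: first use~(\ref{equation:m-fnq-x}) to replace $m(-q^7x^{-6},q^{12},\cdot)$ by $1+q^{-5}x^{-6}m(-q^{-5}x^{-6},q^{12},\cdot)$; then use the inversion~(\ref{equation:m-fnq-flip}) to flip $m(-q^{-5}x^{-6},q^{12},z)$ and $m(-q^{-1}x^{-6},q^{12},z)$ into $m(-q^{5}x^{6},q^{12},z^{-1})$ and $m(-qx^{6},q^{12},z^{-1})$ respectively; and finally apply the change-of-$z$ formula~(\ref{equation:m-change-z}) to move $z^{-1}=q^{-2}x^{-4}$ up to $q^2x^{-4}$. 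The additive constant $1$ produced by~(\ref{equation:m-fnq-x}), once multiplied by the prefactor $-x^{-1}$, delivers the isolated $-x^{-1}$ that appears on the right-hand side.

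After all four pairs of $m$-sums have cancelled pairwise, the identity collapses to a pure theta identity stating that the accumulated residual from the two $m$-splittings and the two $z$-shifts equals the target $J_2J_{2,4}^2/\bigl(xj(x;q)j(-qx^2;q^2)\bigr)$. This last step is the main obstacle, since the residual is a sum of rational combinations of theta products of moduli $q^3$, $q^4$, $q^6$, and $q^{12}$, while the target is a single simple quotient. My plan for this step is to simplify using the quintuple product identity~(\ref{equation:H1Thm1.0}), the base-switching formula~(\ref{equation:1.11}), the section formula~(\ref{equation:jsplit}), and Riemann's relation in the form of Proposition~\ref{proposition:CHcorollary}. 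As a safety net, the difference of the two sides can be treated as a meromorphic function of $x$ whose quasi-periodicity under $x\mapsto qx$ is computed directly, so Proposition~\ref{proposition:H1Thm1.7} applies once enough common zeros of the difference have been exhibited.
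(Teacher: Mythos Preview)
Your plan matches the paper's proof: expand both sides into Appell--Lerch sums via (\ref{equation:g-to-m}) and Corollary~\ref{corollary:msplitn2zprime}, match the $m$-terms using the functional equations, and verify the leftover theta identity. Two refinements in the paper's execution are worth adopting. First, the paper takes $z'=q^{6}x^{4}$ rather than $q^{2}x^{4}$, and it applies the symmetry $g(y,q^{4})=g(q^{4}/y,q^{4})$ to rewrite each $g(-qx^{\pm 2},q^{4})$ \emph{before} invoking (\ref{equation:g-to-m}); with these choices, after (\ref{equation:m-fnq-flip}) and a single shift via (\ref{equation:m-fnq-z}) the $z$-parameters line up exactly, so your two invocations of (\ref{equation:m-change-z}) and the extra theta debris they create are avoided entirely. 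Second, the resulting two-term theta residual is not handled by a generic Riemann-relation chase but by the dedicated three-term identity of Proposition~\ref{proposition:newgenid}, itself proved via Proposition~\ref{proposition:H1Thm1.7} with only a few zeros to check---considerably lighter than the residual your route would leave.
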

\noindent Proposition \ref{theorem:gsplit} has a useful and easily shown corollary, the first half of which is also in the lost notebook \cite[p. $39$]{RLN}, \cite[$(12.4.4)$]{ABI}.
\begin{corollary} \label{corollary:rootsof1} For generic $x\in \mathbb{C}$
\begin{align}
g(x,q)+g(-x,q)&=-2qg(-qx^2,q^4)+\frac{2J_2\overline{J}_{1,4}^2}{j(-qx^2;q^4)j(x^2;q^2)},\label{equation:rootsof1n2k0}\\
g(x,q)-g(-x,q)&=-2x^{-1}+2qx^{-3}g(-qx^{-2},q^4)+\frac{2J_2\overline{J}_{1,4}^2}{xj(-q^3x^2;q^4)j(x^2;q^2)}.\label{equation:rootsof1n2k1}
\end{align}
\end{corollary}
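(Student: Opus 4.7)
My plan is to apply Proposition \ref{theorem:gsplit} to both $g(x,q)$ and $g(-x,q)$ and exploit the parity of each summand. Of the four terms on the right of Proposition \ref{theorem:gsplit}, both $qx^{-3}g(-qx^{-2},q^4)$ and $-qg(-qx^2,q^4)$ are invariant under $x\mapsto -x$, while $-x^{-1}$ is odd and the theta tail $J_2 J_{2,4}^2/\bigl(x\, j(x;q) j(-qx^2;q^2)\bigr)$ has indefinite parity. Adding the two expansions, the odd $-x^{-1}$ cancels and the two $g$-terms double to produce the $-2q g(-qx^2,q^4)$ on the right of (\ref{equation:rootsof1n2k0}); subtracting, one obtains $-2x^{-1}+2qx^{-3}g(-qx^{-2},q^4)$, matching (\ref{equation:rootsof1n2k1}).

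The substantive task is to combine the two theta tails $\frac{J_2 J_{2,4}^2}{\pm x\, j(\pm x;q) j(-qx^2;q^2)}$ into a single clean expression. Placing them over the common denominator $x\, j(x;q) j(-x;q) j(-qx^2;q^2)$ produces, in the numerator, $J_2 J_{2,4}^2$ times $j(-x;q) \pm j(x;q)$. Splitting $j(\pm x;q)=\sum_n(-1)^n q^{\binom{n}{2}}(\pm x)^n$ by the parity of $n$ gives
\begin{align*}
j(-x;q) + j(x;q) &= 2\, j(-qx^2;q^4),\\
j(-x;q) - j(x;q) &= 2x\, j(-q^3x^2;q^4),
\end{align*}
while identity (\ref{equation:1.9}) yields $j(x;q) j(-x;q) = J_{1,2}\, j(x^2;q^2)$.

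To finish I would apply identity (\ref{equation:1.10}) with $n=2$ (after $q\mapsto q^2$, $x\mapsto -qx^2$) to decompose $j(-qx^2;q^2) = (J_2/J_4^2)\, j(-qx^2;q^4)\, j(-q^3x^2;q^4)$. In the sum case, the numerator's $j(-q^3x^2;q^4)$ cancels, leaving $j(-qx^2;q^4)$ in the denominator as required; in the difference case the other factor cancels, leaving $j(-q^3x^2;q^4)$. The residual theta-constant ratio is $J_2 J_{2,4}^2 J_4^2/(J_2\, J_{1,2})$, which simplifies to $2J_2\, \overline{J}_{1,4}^2$ using the identities $J_{1,2}=J_1^2/J_2$, $\overline{J}_{1,4}=J_2^2/J_1$ from Section \ref{section:thetaprelim} together with $J_{2,4}=J_2^2/J_4$ (obtained from $J_{1,2}=J_1^2/J_2$ by $q\mapsto q^2$). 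There is no conceptual obstacle; the only risk is sign-and-constant bookkeeping, in particular tracking which factor of $j(-qx^2;q^2)$ cancels in each of the two cases.
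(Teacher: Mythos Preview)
Your approach is exactly what the paper intends (it simply calls the corollary ``easily shown'' from Proposition \ref{theorem:gsplit}), and your theta-tail simplification is correct. One slip to fix: you write that $qx^{-3}g(-qx^{-2},q^4)$ is invariant under $x\mapsto -x$, but it is odd (the $x^{-3}$ prefactor flips sign while $g(-qx^{-2},q^4)$ is even); your subsequent bookkeeping already treats it correctly as odd, so this is only a misstatement, not a gap.
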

\noindent We give a new proof of Proposition \ref{theorem:gsplit}.
\begin{proof}[Proof of Proposition \ref{theorem:gsplit}] We note the easily shown identity
\begin{equation}
g(x,q)=g(q/x,q)\label{equation:ginvt}.
\end{equation}
Applying Corollary (\ref{corollary:msplitn2zprime}) with $z'=q^6x^4$ to each Appell-Lerch sum of (\ref{equation:g-to-m}), we have
{\allowdisplaybreaks \begin{align}
g(x,q)&=-x^{-1}m(q^2x^{-3},q^3,x^2)-x^{-2}m(qx^{-3},q^3,x^2)\notag\\
&=-x^{-1}m(-q^7x^{-6},q^{12},q^6x^4)+q^{-1}x^{-4}m(-qx^{-6},q^{12},q^6x^4)\notag\\
& \ \ \ \ \ \ \ -\frac{q^6x^3J_6^3}{j(q^2x^{-1};q^3)j(-q^{13}x^{-2};q^6)j(q^6x^4;q^{12})}\cdot \frac{\overline{J}_{13,6}j(q^{-6};q^{12})}{j(x^2;q^6)}\notag\\
&\ \ \ \ -x^{-2}m(-q^5x^{-6},q^{12},q^6x^4)+q^{-2}x^{-5}m(-q^{-1}x^{-6},q^{12},q^6x^4)\notag\\
& \ \ \ \ \ \ \ -\frac{q^6x^2J_6^3}{j(qx^{-1};q^3)j(-q^{11}x^{-2};q^6)j(q^6x^4;q^{12})}\cdot \frac{\overline{J}_{11,6}j(q^{-6};q^{12})}{j(x^2;q^6)}.\label{equation:gexpA}
\end{align}}%
Using (\ref{equation:g-to-m}), we also have
\begin{align}
-qg(-qx^2,q^4)&=-qg(-q^3x^{-2},q^4)\notag\\
&=-q^{-2}x^2m(-q^{-1}x^6,q^{12},q^6x^{-4})+q^{-5}x^4m(-q^{-5}x^6,q^{12},q^6x^{-4})\notag\\
&=q^{-1}x^{-4}m(-qx^{-6},q^{12},q^6x^{4})-x^{-2}m(-q^{5}x^{-6},q^{12},q^6x^{4}),\label{equation:gexpB}
\end{align}
where the last line follows from (\ref{equation:m-fnq-flip}) and (\ref{equation:m-fnq-z}).  Similarly, we have
\begin{align}
qx^{-3}g&(-q^3x^2,q^4)=q^{-2}x^{-5}m(-q^{-1}x^6,q^{12},q^6x^{4})-q^{-5}x^{-7}m(-q^{-5}x^{-6},q^{12},q^6x^{4})\notag\\
&=q^{-2}x^{-5}m(-q^{-1}x^6,q^{12},q^6x^{4})+x^{-1}-x^{-1}m(-q^{7}x^{-6},q^{12},q^6x^{4}),\label{equation:gexpC}
\end{align}
where the last line follows from (\ref{equation:m-fnq-x-alt2}).  Combining (\ref{equation:gexpA}), (\ref{equation:gexpB}), and (\ref{equation:gexpC}) and then simplifying yields
\begin{align*}
g(x,q)&=-x^{-1}+qx^{-3}g(-qx^{-2},q^4)-qg(-qx^2,q^4)\\
& \ \ \ \ +\frac{x^{-1}J_6^3\overline{J}_{1,6}J_{6,12}}{j(q^6x^4;q^{12})j(x^2;q^6)}\cdot \Big [ \frac{1}{j(qx;q^3)j(-q^5x^2;q^6)}+\frac{x}{j(q^2x;q^3)j(-qx^2;q^6)}\Big ]\\
&=-x^{-1}+qx^{-3}g(-qx^{-2},q^4)-qg(-qx^2,q^4)\\
& \ \ \ \ +\frac{x^{-1}J_6^3\overline{J}_{1,6}J_{6,12}}{j(q^6x^4;q^{12})j(x^2;q^6)}\cdot \frac{j(-x;q^3)j(q^3x^2;q^6)}{j(qx;q^3)j(-q^5x^2;q^6)j(q^2x;q^3)j(-qx^2;q^6)}\cdot \frac{J_{2,12}^2J_{4,12}}{J_{12}^3},
\end{align*}
where the last equality follows from Proposition \ref{proposition:newgenid}.  The result then follows from elementary theta function properties.
\end{proof}


\subsection{Properties of Hecke-type double sums}
We recall from \cite{HM} some useful Hecke-type double sum identities:
\begin{proposition} \label{proposition:fabc-mod2} For $x,y\in\mathbb{C}^*$
{\allowdisplaybreaks {\begin{align}
f_{a,b,c}(x,y,q)&=f_{a,b,c}(-x^2q^a,-y^2q^c,q^4)-xf_{a,b,c}(-x^2q^{3a},-y^2q^{c+2b},q^4)\label{equation:fabc-mod2}\\
&\ \ \ \ -yf_{a,b,c}(-x^2q^{a+2b},-y^2q^{3c},q^4)+xyq^bf_{a,b,c}(-x^2q^{3a+2b},-y^2q^{3c+2b},q^4),\notag\\
f_{a,b,c}(x,y,q)&=-\frac{q^{a+b+c}}{xy}f_{a,b,c}(q^{2a+b}/x,q^{2c+b}/y,q),\label{equation:fabcflip}\\
f_{a,b,c}(x,y,q)& =-yf_{a,b,c}(q^bx,q^cy,q)+j(x;q^a),\label{equation:fspec-1}\\
f_{a,b,c}(x,y,q)& =-xf_{a,b,c}(q^ax,q^by,q)+j(y;q^c).\label{equation:fspec-2}
\end{align}}}
\end{proposition}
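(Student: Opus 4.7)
The plan is to prove each of the four identities by a direct manipulation of the double sum in Definition \ref{definition:fabc-def}, keeping careful track of how the condition $\sg(r)=\sg(s)$ transforms under each substitution.

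For the mod~2 decomposition (\ref{equation:fabc-mod2}), I would write $r = 2R+i$ and $s=2S+j$ with $(i,j) \in \{0,1\}^2$, and verify first that $\sg(r)=\sg(R)$ and $\sg(s)=\sg(S)$ for each fixed parity $(i,j)$ (one checks the four cases separately; the key point is that $2R+1 \ge 0 \iff R \ge 0$). The condition $\sg(r)=\sg(s)$ therefore becomes $\sg(R)=\sg(S)$ for every parity choice. Then I would expand
\begin{equation*}
a\tbinom{2R+i}{2} + b(2R+i)(2S+j) + c\tbinom{2S+j}{2}
= 4a\tbinom{R}{2}+4bRS+4c\tbinom{S}{2} + A_{i,j}R + B_{i,j}S + C_{i,j},
\end{equation*}
extract the $q$-powers depending on $(i,j)$ and absorb them into the $x$, $y$ arguments of $f_{a,b,c}(\cdot,\cdot,q^4)$, and collect the leading signs $(-1)^{r+s}=(-1)^{i+j}$ together with the powers $x^i y^j$. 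This yields the four summands on the right-hand side.

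For the flip (\ref{equation:fabcflip}), I would make the substitution $r\mapsto -r'-1$, $s\mapsto -s'-1$, using the elementary computations $\binom{-r'-1}{2}=\binom{r'}{2}+2r'+1$ and $(-r'-1)(-s'-1)=r's'+r'+s'+1$, together with $\sg(-r'-1)=-\sg(r')$ and $(-1)^{-r'-1}=-(-1)^{r'}$. The condition $\sg(r)=\sg(s)$ is preserved, and after collecting the constant exponent $a+b+c$ and the linear pieces $(2a+b)r'+(2c+b)s'$ one recognises the sum on the right.

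For the recursions (\ref{equation:fspec-1}) and (\ref{equation:fspec-2}) I would argue by example: to prove (\ref{equation:fspec-1}), shift $s \mapsto s-1$ in $-yf_{a,b,c}(q^bx,q^cy,q)$, using $c\binom{s}{2}+br+cs=br+c\binom{s+1}{2}$, so that the summand matches that of $f_{a,b,c}(x,y,q)$ except that the summation region is $\sg(r)=\sg(s-1)$ instead of $\sg(r)=\sg(s)$. The symmetric difference of the two regions consists exactly of the line $s=0$, and the difference of the sums collapses to
\begin{equation*}
\sum_{r\ge 0}(-1)^r x^r q^{a\binom{r}{2}} + \sum_{r<0}(-1)^r x^r q^{a\binom{r}{2}} = \sum_{r\in\Z}(-1)^r x^r q^{a\binom{r}{2}} = j(x;q^a),
\end{equation*}
by Jacobi's triple product. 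Identity (\ref{equation:fspec-2}) follows by the same argument after swapping the roles of $r$ and $s$.

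No step should be genuinely hard; the main hazard is the bookkeeping around the region condition $\sg(r)=\sg(s)$ in each substitution, especially in the recursion, where the ``boundary line'' $s=0$ (or $r=0$) is where the theta-function $j(x;q^a)$ (or $j(y;q^c)$) is born. I would therefore treat the four identities in the order (\ref{equation:fspec-1})--(\ref{equation:fspec-2}), (\ref{equation:fabcflip}), (\ref{equation:fabc-mod2}), from the simplest accounting to the most intricate, so that the parity splitting argument can be carried out last with the sign/region conventions already thoroughly internalised.
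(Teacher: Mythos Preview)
Your proposal is correct. The paper does not actually prove Proposition~\ref{proposition:fabc-mod2}; it simply recalls these identities from the companion paper \cite{HM}, so there is no in-paper argument to compare against. Your direct manipulations are the standard elementary verifications: the parity split $r=2R+i$, $s=2S+j$ for (\ref{equation:fabc-mod2}), the involution $(r,s)\mapsto(-r-1,-s-1)$ for (\ref{equation:fabcflip}), and the shift $s\mapsto s-1$ isolating the boundary line $s=0$ (and hence $j(x;q^a)$) for (\ref{equation:fspec-1}), with the symmetric argument for (\ref{equation:fspec-2}). All of the bookkeeping you flag---in particular $\sg(2R+i)=\sg(R)$ for $i\in\{0,1\}$, $\sg(-r'-1)=-\sg(r')$, and the fact that for $s=0$ both sign regions contribute $+1$ after the $\sg(r)$ weight---checks out.
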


We state and prove a corollary to Theorem \ref{theorem:masterFnp}:
\begin{corollary}\label{corollary:cor-n3p2}For generic $x,y\in \mathbb{C}^*$
\begin{align*}
f_{3,5,3}(x,y,q)=g_{3,5,3}(x,y,q,-1,-1)+\theta_{3,2}(x,y,q),
\end{align*}
where
\begin{align}
g_{3,5,3}(x,y,q,-1,-1):=&\sum_{t=0}^{2}(-y)^tq^{3\binom{t}{2}}j(q^{5t}x;q^3)m\Big (-q^{27-16t}\cdot \frac{y^3}{x^5},q^{48},-1\Big )\label{equation:mdef-n3p2}\\
&+\sum_{t=0}^{2}(-x)^tq^{3\binom{t}{2}}j(q^{5t}y;q^3)m\Big (-q^{27-16t}\cdot \frac{x^3}{y^5},q^{48},-1\Big ),\notag
\end{align}
and
\begin{align}
\theta_{3,2}(x,y,q)&:=\frac{1}{\overline{J}_{0,48}}\cdot \frac{x^{1/2}y^{1/2}q^{-11/2}j(q^5xy;q^8)\overline{J}_{8,32}}{2j(q^5y^5/x^3;q^{16})j(q^5x^5/y^3;q^{16})}\label{equation:thetadef-n3p2}\\
& \ \ \ \cdot \Big [j(-q^{5/2}x^{5/2}/y^{3/2};q^{8})j(-q^{5/2}y^{5/2}/x^{3/2};q^{8})j(q^{3/2}y^{3/2}/x^{3/2};q^3)\notag \\ 
& \ \ \ \ \ \ -j(q^{5/2}x^{5/2}/y^{3/2};q^{8})j(q^{5/2}y^{5/2}/x^{3/2};q^{8})j(-q^{3/2}y^{3/2}/x^{3/2};q^3)\Big ].\notag
\end{align}
\end{corollary}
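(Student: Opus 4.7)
The approach is to specialize Theorem~\ref{theorem:masterFnp} to $(n,p)=(3,2)$ and extract the two claimed closed forms by direct substitution followed by theta-function manipulation. This gives the decomposition $f_{3,5,3}(x,y,q)=g_{3,5,3}(x,y,q,-1,-1)+\theta_{3,2}(x,y,q)$, and the task reduces to showing that the general expressions for $g_{n,n+p,n}$ in (\ref{equation:mdef-2}) and for $\theta_{n,p}$ in Theorem~\ref{theorem:masterFnp} simplify to (\ref{equation:mdef-n3p2}) and (\ref{equation:thetadef-n3p2}) under this specialization.

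For the $g$ part, the substitution is immediate. With $a=c=3$ and $b=5$ one computes $b^{2}-ac=16$, so $a(b^{2}-ac)=c(b^{2}-ac)=48$; the constant shift $a\binom{b+1}{2}-c\binom{a+1}{2}=c\binom{b+1}{2}-a\binom{c+1}{2}=27$; and $(-y)^{a}/(-x)^{b}=y^{3}/x^{5}$, $(-x)^{c}/(-y)^{b}=x^{3}/y^{5}$. The two $t$-sums in (\ref{equation:mdef-2}), with $0\le t\le a-1=2$, then read off as the two $t$-sums in (\ref{equation:mdef-n3p2}) essentially by inspection.

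For the $\theta$ part things are more delicate. With $n=3$, $p=2$ the fractional-part shift $\{(n-1)/2\}=0$, so both $r$ and $s$ run over $\{0,1\}$, making $\theta_{3,2}$ a four-term double sum. Each term carries the common prefactor $J_{32}^{3}/\overline{J}_{0,48}$, a numerator of the shape $j(-q^{6(s-r)}x^{3}/y^{3};q^{12})\,j(q^{16(r+s)+10}x^{2}y^{2};q^{32})$, a denominator $j(q^{16r+5}y^{5}/x^{3};q^{32})\,j(q^{16s+5}x^{5}/y^{3};q^{32})$, and the monomial factor $(-x)^{r-1}(-y)^{s+2}q^{3\binom{r-1}{2}+5(r-1)(s+2)+3\binom{s+2}{2}}$. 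My plan is to pair the four terms so as to descend the theta bases: group $(0,0)$ with $(0,1)$ and $(1,0)$ with $(1,1)$ (same $r$), clear each pair over a common $s$-dependent denominator, and invoke the base-halving identity $j(z;q^{32})\,j(q^{16}z;q^{32})=J_{32}^{2}\,j(z;q^{16})/J_{16}$ (itself a consequence of $(a;q^{2})_{\infty}(qa;q^{2})_{\infty}=(a;q)_{\infty}$) to consolidate the resulting $q^{32}$-theta products into a single $q^{16}$-theta. A second application of the same halving identity to the remaining $r$-dependent factor produces the denominator $j(q^{5}y^{5}/x^{3};q^{16})\,j(q^{5}x^{5}/y^{3};q^{16})$ appearing in (\ref{equation:thetadef-n3p2}). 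Within each pairing step the quadratic identities (\ref{equation:H1Thm1.1})--(\ref{equation:H1Thm1.2B}) can be used to reorganize the numerator theta products; a concluding application of the Riemann relation (Proposition~\ref{proposition:CHcorollary}), together with the square-root splitting $j(w^{2};q^{2})=j(w;q)\,j(-w;q)/J_{1,2}$ (coming from (\ref{equation:1.9})), produces the half-integer arguments $q^{5/2}x^{5/2}/y^{3/2}$, $q^{5/2}y^{5/2}/x^{3/2}$, $q^{3/2}y^{3/2}/x^{3/2}$ as well as the external factor $x^{1/2}y^{1/2}q^{-11/2}j(q^{5}xy;q^{8})\overline{J}_{8,32}$ that appear in (\ref{equation:thetadef-n3p2}).

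The principal obstacle is the extensive exponent and sign bookkeeping across the repeated halvings, Riemann-relation, and splitting steps: the four $(r,s)$-contributions have distinct $q$-, $x$-, and $y$-exponents that must reconcile exactly with the $q^{-11/2}\,x^{1/2}y^{1/2}$ prefactor once the square-roots are extracted. If this bookkeeping proves too cumbersome, a conceptually cleaner fallback is to view both sides of the desired identity for $\theta_{3,2}$ as meromorphic functions of $x$ for generic fixed $y$, verify that they share the same quasi-periodicity under $x\mapsto qx$, and invoke Proposition~\ref{proposition:H1Thm1.7} after confirming agreement at enough specific values of $x$ (for instance the zeros of the $q^{16}$-thetas in the common denominator, where several of the four original terms vanish and the remaining ones admit direct simplification).
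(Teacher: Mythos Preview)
Your overall plan---specialize Theorem~\ref{theorem:masterFnp} to $(n,p)=(3,2)$, read off the $g$-part immediately, and simplify the four-term $\theta$-sum by pairing and theta identities---is exactly the paper's approach. The $g$-part check you give matches the paper's one-line remark.

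The difference is in the pairing of the four $\theta$-terms. You group by fixed $r$ (pairs $(0,0)$--$(0,1)$ and $(1,0)$--$(1,1)$), aiming to halve the theta base in the $s$-dependent denominator first. The paper instead pairs \emph{diagonally}: $(r,s)=(0,0)$ with $(1,1)$, and $(0,1)$ with $(1,0)$. The point is that the numerator factor $j(-q^{6(s-r)}x^{3}/y^{3};q^{12})$ depends only on $s-r$, so the diagonal pairing groups terms sharing this factor; after extracting it, the bracket that remains is precisely of the shape to which (\ref{equation:H1Thm1.1}) applies directly, collapsing each pair to a single product and simultaneously producing the $q^{16}$-base denominator via (\ref{equation:1.10}). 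Your row pairing forfeits this common numerator factor, so the ``quadratic identities can be used'' step is not a single clean application of (\ref{equation:H1Thm1.1}) but would require further recombination---in effect rediscovering the diagonal grouping later. The paper then uses (\ref{equation:1.12}) and (\ref{equation:1.10}) to reach an intermediate form, applies (\ref{equation:H1Thm1.2A})--(\ref{equation:H1Thm1.2B}) to introduce the half-integer arguments $q^{5/2}x^{5/2}/y^{3/2}$ etc., and finishes with (\ref{equation:jsplit}) (at $m=2$) rather than the Riemann relation (Proposition~\ref{proposition:CHcorollary}) you propose; the Riemann relation is not used at all in the paper's argument. Your fallback via Proposition~\ref{proposition:H1Thm1.7} is valid in principle but is a verification rather than a derivation.
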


\begin{proof}[Proof of Corollary \ref{corollary:cor-n3p2}]  Identity (\ref{equation:mdef-n3p2}) easily follows from the definition (\ref{equation:mdef-2}).  For (\ref{equation:thetadef-n3p2}) we first substitute $n=3$ and $p=2$ into the definition of $\theta_{3,2}(x,y,q)$ to obtain
\begin{align*}
\theta_{3,2}&(x,y,q)=\Big [-\frac{y^2j(-x^3/y^3;q^{12})j(q^{10}x^2y^2;q^{32})}{q^4xj(q^5y^5/x^3,q^5x^5/y^3;q^{32})}
+\frac{y^3j(-q^6x^3/y^3;q^{12})j(q^{26}x^2y^2;q^{32})}{q^3xj(q^5y^5/x^3,q^{21}x^5/y^3;q^{32})}\\
& \ \ \ +\frac{q^3y^2j(-q^{-6}x^3/y^3;q^{12})j(q^{26}x^2y^2;q^{32})}{j(q^{21}y^5/x^3,q^5x^5/y^3;q^{32})}
-\frac{q^9y^3j(-x^3/y^3;q^{12})j(q^{42}x^2y^2;q^{32})}{j(q^{21}y^5/x^3,q^{21}x^5/y^3;q^{32})}\Big ]\cdot \frac{J_{32}^3}{\overline{J}_{0,48}}.
\end{align*}
We then combine the first and fourth summands as well as the second and third summands using (\ref{equation:H1Thm1.1}).  For example, we first use (\ref{equation:1.8}) to write
\begin{equation*}
j(q^{42}x^2y^2;q^{32})=-x^{-2}y^{-2}q^{-10}j(q^{10}x^2y^2;q^{32}).
\end{equation*}
This allows us to rewrite the sum of the first and fourth summands as
\begin{align*}
-&\frac{yj(-x^3/y^3;q^{12})j(q^{10}x^2y^2;q^{32})}{qx^2j(q^{5}y^5/x^3,q^5x^5/y^3,q^{21}y^5/x^3,q^{21}x^5/y^3;q^{32})}\\
&\ \ \ \ \ \ \ \ \ \ \cdot \Big [ j(q^5y^5/x^3,q^5x^5/y^3;q^{32})-xyq^{-3}j(q^{21}y^5/x^3,q^{21}x^5/y^3;q^{32})\Big ].
\end{align*}
We evaluate the bracketed expression by substituting $q\rightarrow q^{16}$, $x\rightarrow yq^{-3}$, $y\rightarrow -q^8y^4/x^4$,   in (\ref{equation:H1Thm1.1}), and we rewrite the denominator by using (\ref{equation:1.10}).  Using (\ref{equation:1.8}) to have
\begin{equation*}
j(xyq^{-3};q^{16})=-q^{-16}q^{-3}xyj(q^{13}xy;q^{16})=-q^{-3}xyj(q^{13}xy;q^{16}),
\end{equation*}
we see that the sum of the first and fourth summands is
\begin{align*}
-&\frac{y^2j(-x^3/y^3;q^{12})j(q^{10}x^2y^2;q^{32})j(q^{13}xy;q^{16})j(-q^8y^4/x^4;q^{16})J_{16}^2}{q^4xj(q^{5}y^5/x^3;q^{16})j(q^5x^5/y^3;q^{16})J_{32}^4}.
\end{align*}
The second and third summands can be combined in a similar way, and it follows that we can write
\begin{align}
\theta_{3,2}(x,y,q)&=\frac{1}{\overline{J}_{0,48}}\cdot \frac{x^3y^{-1}q^{-3}J_{16,32}}{j(q^5y^5/x^3;q^{16})j(q^5x^5/y^3;q^{16})}\label{equation:cor-n3p2A}\\
& \ \ \ \cdot \Big [j(-q^6x^3/y^3;q^{12})j(q^{26}x^2y^2;q^{32})j(-y^4/x^4;q^{16})j(q^5yx;q^{16})\notag \\ 
& \ \ \ \ \ \ -x^{-4}y^3q^{-1} j(-x^3/y^3;q^{12})j(q^{10}x^2y^2;q^{32})j(-q^8y^4/x^4;q^{16})j(q^{13}yx;q^{16})\Big ].\notag
\end{align}
Using (\ref{equation:1.12}) and (\ref{equation:1.10}) we obtain
\begin{align}
\theta_{3,2}(x,y,q)&=\frac{1}{\overline{J}_{0,48}}\cdot \frac{x^3y^{-1}q^{-3}}{j(q^5y^5/x^3;q^{16})j(q^5x^5/y^3;q^{16})}\notag\\
& \ \ \ \cdot \Big [j(-q^6x^3/y^3;q^{12})j(-q^{13}xy,q^{13}xy,-y^4/x^4,q^5yx;q^{16})\notag \\ 
& \ \ \ \ \ \ -x^{-4}y^3q^{-1} j(-x^3/y^3;q^{12})j(-q^{5}xy,q^5xy,-q^8y^4/x^4,q^{13}yx;q^{16})\Big ]\notag\\
&=\frac{1}{\overline{J}_{0,48}}\cdot \frac{x^3y^{-1}q^{-3}j(q^5xy;q^8)\overline{J}_{8,32}}
{j(q^5y^5/x^3;q^{16})j(q^5x^5/y^3;q^{16})}\label{equation:cor-n3p2B}\\
& \ \ \ \cdot \Big [j(-q^6x^3/y^3;q^{12})j(-q^{13}xy;q^{16})j(-y^4/x^4;q^{16})\notag \\ 
& \ \ \ \ \ \ -x^{-4}y^3q^{-1} j(-x^3/y^3;q^{12})j(-q^{5}xy;q^{16})j(-q^8y^4/x^4;q^{16})\Big ].\notag
\end{align}
From (\ref{equation:H1Thm1.2A}) and (\ref{equation:H1Thm1.2B}), we have
\begin{align}
j(q^{5/2}x^{5/2}/y^{3/2},q^{5/2}y^{5/2}/x^{3/2};q^8)&-j(-q^{5/2}x^{5/2}/y^{3/2},-q^{5/2}y^{5/2}/x^{3/2};q^8)\label{equation:1.2AA}\\
&\ \ \ =-2q^{5/2}x^{5/2}/y^{3/2}j(-y^4/x^4,-q^{13}xy;q^{16}),\notag\\
j(q^{5/2}x^{5/2}/y^{3/2},q^{5/2}y^{5/2}/x^{3/2};q^8)&+j(-q^{5/2}x^{5/2}/y^{3/2},-q^{5/2}y^{5/2}/x^{3/2};q^8)\label{equation:1.2BA}\\
&\ \ \ =2j(-q^8y^4/x^4,-q^{5}xy;q^{16}).\notag
\end{align}
Substituting (\ref{equation:1.2AA}) and (\ref{equation:1.2BA}) into (\ref{equation:cor-n3p2B}) and collecting terms yields
\begin{align}
\theta_{3,2}&(x,y,q)=\frac{1}{\overline{J}_{0,48}}\cdot \frac{x^{1/2}y^{1/2}q^{-11/2}j(q^5xy;q^8)\overline{J}_{8,32}}{2j(q^5y^5/x^3;q^{16})j(q^5x^5/y^3;q^{16})}\label{equation:cor-n3p2C}\\
& \cdot \Big [j(-q^{5/2}x^{5/2}/y^{3/2},-q^{5/2}y^{5/2}/x^{3/2};q^{8})\Big (j(-q^6x^3/y^3;q^{12}) -\frac{q^{3/2}y^{3/2}}{x^{3/2}}j(-x^3/y^3;q^{12}) \Big )\notag \\ 
&  \ \ \ \ \ -j(q^{5/2}x^{5/2}/y^{3/2},q^{5/2}y^{5/2}/x^{3/2};q^{8})\Big (j(-q^6x^3/y^3;q^{12}) +\frac{q^{3/2}y^{3/2}}{x^{3/2}}j(-x^3/y^3;q^{12}) \Big )\Big ].\notag
\end{align}
Using (\ref{equation:jsplit}) with $m=2$ gives the desired result.
\end{proof}

\subsection{Third order mock theta functions in terms of the Appell-Lerch sums.}\label{section:known}
We finish the preliminaries section by recalling the following identities from \cite{HM} which were shown using results of Watson \cite{W3} and Appell-Lerch sum properties. 
{\allowdisplaybreaks \begin{align}
\psi(q):&=\sum_{n\ge 1}\frac{q^{n^2}}{(q;q^2)_n}=qg(q,q^4)=-q^{-1}m(q,q^{12},q^2)-m(q^5,q^{12},q^2)\label{equation:3rdpsi}\\
&=-m(q,-q^3,-q)+\frac{qJ_{12}^3}{J_4 J_{3,12}}\notag\\
\nu(q):&=\sum_{n\ge 0} \frac{q^{n(n+1)}} {(-q;q^2)_{n+1}}=g(i\sqrt{q},q)
=q^{-1}m(q^2,q^{12},-q^3)+q^{-1}m(q^2,q^{12},-q^9)\label{equation:3rdnu}\\
&=2q^{-1}m(q^2,q^{12},-q^3)+\frac{J_1 J_{3,12}}{J_2}\notag\\
\phi(q):&=\sum_{n\ge 0}\frac{q^{n^2}}{(-q^2;q^2)_n}=(1-i)(1+ig(i,q))\label{equation:3rdphi}\\
&=m(q^5,q^{12},q^4)+m(q^5,q^{12},q^8)+q^{-1}m(q,q^{12},q^4)+q^{-1}m(q,q^{12},q^8)\notag\\
&=2m(q,-q^3,-1)+\frac{2qJ_{12}^3}{J_4 J_{3,12}} \notag 
\end{align}}




\section{Proof of Theorem \ref{theorem:newheckethm}}\label{section:proof-newheckethm}
We prove identity (\ref{equation:psi3(q)-hecke}).  Focusing on the right hand side, we have
 \begin{align*}
 \frac{1}{(q)_{\infty}}&\sum_{n= 0}^{\infty}(-1)^nq^{2n^2+n}(1+q^{2n+1})\sum_{j=-n}^nq^{-\binom{j+1}{2}}\\
&=\frac{1}{(q)_{\infty}}\Big ( \sum_{n= 0}^{\infty}(-1)^nq^{2n^2+n}\sum_{j=-n}^nq^{-\binom{j+1}{2}}+\sum_{n= 0}^{\infty}(-1)^nq^{2n^2+3n+1}\sum_{j=-n}^nq^{-\binom{j+1}{2}}\Big )\\
 &=\frac{1}{(q)_{\infty}}\Big ( \sum_{\substack{n+j\ge 0\\n-j\ge 0}}- \sum_{\substack{n+j< 0\\n-j< 0}} \Big )(-1)^nq^{2n^2+n-\binom{j+1}{2}},
\end{align*}
where in the last line we replaced $n$ with $-n-1.$  With a few more straightforward operations, we have
{\allowdisplaybreaks \begin{align*}
 \frac{1}{(q)_{\infty}}&\Big ( \sum_{\substack{n+j\ge 0\\n-j\ge 0}}- \sum_{\substack{n+j< 0\\n-j< 0}} \Big )(-1)^nq^{2n^2+n-\binom{j+1}{2}}\\
&=\frac{1}{(q)_{\infty}}\Big ( \sum_{\substack{r,s\ge 0\\r\equiv s \pmod 2}}- \sum_{\substack{r,s< 0\\r\equiv s \pmod 2}} \Big )(-1)^{\tfrac{r+s}{2}}q^{\tfrac{3}{8}r^2+\tfrac{5}{4}rs+\tfrac{3}{8}s^2+\tfrac{1}{4}r+\tfrac{3}{4}s}\\
&=\frac{1}{(q)_{\infty}}\Big (  f_{3,5,3}(q^2,q^3,q)-q^3f_{3,5,3}(q^6,q^7,q)\Big )\\
 &=\frac{1}{(q)_{\infty}}\Big (  2f_{3,5,3}(q^2,q^3,q)-(q)_{\infty}\Big ), &(\text{by }(\ref{equation:fspec-1}))
 \end{align*}}%
where the first equality follows from the substitutions $r=n+j$ and $s=n-j$, the second equality follows from considering the cases $r$, $s$ even and $r$, $s$ odd.
 So to prove (\ref{equation:psi3(q)-hecke}), it suffices to show
 \begin{equation}
 J_1\cdot (1+\psi(q))=f_{3,5,3}(q^2,q^3,q).
 \end{equation}
 We first compute $g_{3,5,3}(q^2,q^3,q,-1,-1)$.  Using Corollary \ref{corollary:cor-n3p2}, we have
 \begin{align*}
 g_{3,5,3}(q^2,q^3,q,-1,-1)&=j(q^2;q^3)m(-q^{26},q^{48},-1)+j(q^3;q^3)m(-q^{18},q^{48},-1)\\
 & \ \ -q^3 j(q^7;q^3)m(-q^{10},q^{48},-1)-q^2j(q^8;q^3)m(-q^{2},q^{48},-1)\\
  & \ \ +q^9 j(q^{12};q^3)m(-q^{-6},q^{48},-1)+q^7j(q^{13};q^3)m(-q^{-14},q^{48},-1)\\
  &=J_1m(-q^{26},q^{48},-1)-q^{-2}J_1m(-q^{10},q^{48},-1) \\
  &\ \ -q^{-5}J_1m(-q^2,q^{48},-1)
  +q^{-15}J_1m(-q^{-14},q^{48},-1),
\end{align*}
by (\ref{equation:1.8}).  Using (\ref{equation:m-fnq-x}) and (\ref{equation:m-fnq-flip}),
\begin{align}
  g_{3,5,3}(q^2,q^3,q,-1,-1)&=J_1+J_1q^{-22}m(-q^{-22},q^{48},-1)+q^{-12}J_1m(-q^{-10},q^{48},-1)\notag\\
  &\ \ +q^{-7}J_1m(-q^{-2},q^{48},-1)
  -q^{-1}J_1m(-q^{14},q^{48},-1)\notag\\
  &=J_1-J_1m(-q^{22},q^{48},-1)+q^{-12}J_1m(-q^{-10},q^{48},-1)\notag\\
  &\ \ +q^{-7}J_1m(-q^{-2},q^{48},-1)
  -q^{-1}J_1m(-q^{14},q^{48},-1).\label{equation:gpsi}
   \end{align}
 Using Corollary \ref{corollary:msplitn2}, we have
 \begin{align}
 m(q,q^{12},q^2)&=m(-q^{14},q^{48},-1)-q^{-11}m(-q^{-10},q^{48},-1)\notag\\
 &\ \ -\frac{J_{24}^3}{J_{3,12}J_{14,24}\overline{J}_{0,48}}\cdot \Big [ \frac{J_{16,24}\overline{J}_{4,48}}{J_{2,24}}-q^{3}\cdot \frac{J_{28,24}\overline{J}_{28,48}}{J_{14,24}}\Big ]\notag\\
 &=m(-q^{14},q^{48},-1)-q^{-11}m(-q^{-10},q^{48},-1)\notag\\
 &\ \ -\frac{J_{24}^3}{J_{3,12}J_{14,24}\overline{J}_{0,48}}\cdot \Big [ \frac{J_{16,24}\overline{J}_{4,48}}{J_{2,24}}+q^{-1}\cdot \frac{J_{4,24}\overline{J}_{28,48}}{J_{14,24}}\Big ],\label{equation:gsplitpsi1}
 \end{align}
 and
  {\allowdisplaybreaks \begin{align}
 m&(q^5,q^{12},q^2)=m(-q^{22},q^{48},-1)-q^{-7}m(-q^{-2},q^{48},-1)\notag\\
 &\ \ -\frac{J_{24}^3}{J_{7,12}J_{22,24}\overline{J}_{0,48}}\cdot \Big [ \frac{J_{24,24}\overline{J}_{4,48}}{J_{2,24}}-q^{7}\cdot \frac{J_{36,24}\overline{J}_{28,48}}{J_{14,24}}\Big ]\notag\\
 &=m(-q^{22},q^{48},-1)-q^{-7}m(-q^{-2},q^{48},-1)
  -q^{-5}\cdot \frac{J_{24}^3J_{12,24}\overline{J}_{28,48}}{J_{7,12}J_{22,24}\overline{J}_{0,48}J_{14,24}}.\label{equation:gsplitpsi2}
 \end{align}}%
 Substituting (\ref{equation:gsplitpsi1}) and (\ref{equation:gsplitpsi2}) into (\ref{equation:gpsi}) and using (\ref{equation:3rdpsi}), we have
 \begin{align*}
 g_{3,5,3}(q^2,q^3,q,-1,-1)&=J_1 (1+\psi(q)) -q^{-5}\cdot\frac{J_{24}^3J_1J_{12,24}\overline{J}_{28,48}}{J_{7,12}J_{22,24}\overline{J}_{0,48}J_{14,24}}\\
& \ \  -q^{-1}\cdot \frac{J_1J_{24}^3}{J_{3,12}J_{14,24}\overline{J}_{0,48}}\Big [ \frac{J_{16,24}\overline{J}_{4,48}}{J_{2,24}}+q^{-1}\cdot \frac{J_{4,24}\overline{J}_{28,48}}{J_{14,24}}\Big ]\\
&=J_1 (1+\psi(q)) -q^{-1}\cdot\frac{J_{24}^3J_{8,24}\overline{J}_{4,48}J_1}{J_{3,12}J_{10,24}\overline{J}_{0,48}J_{2,24}}\\
&\ \ -q^{-5}\cdot \frac{J_{24}^3\overline{J}_{28,48}J_1}{\overline{J}_{0,48}J_{10,24}}\Big [ \frac{J_{12,24}}{J_{7,12}J_{22,24}}+q^{3}\cdot \frac{J_{4,24}}{J_{3,12}J_{14,24}}\Big ],
\end{align*}
where in the last equality we grouped the first and third summands.  Using Proposition \ref{proposition:CHcorollary} with $q\rightarrow q^{24}$, $a= q^{13}$, $b= q^9$, $c= q^6$, $d= q$,  to evaluate the bracketed expression we obtain
\begin{align*}
 g_{3,5,3}&(q^2,q^3,q,-1,-1)=J_1 (1+\psi(q)) -q^{-1}\cdot\frac{J_{24}^3J_{8,24}\overline{J}_{4,48}J_1}{J_{3,12}J_{10,24}\overline{J}_{0,48}J_{2,24}}\\
&\ \ -q^{-5}\cdot \frac{J_{24}^3\overline{J}_{28,48}J_1}{\overline{J}_{0,48}J_{10,24}}\cdot \frac{J_{14,24}J_{8,24}J_{19,24}J_{17,24}}{J_{3,12}J_{7,12}J_{2,24}J_{10,24}}\cdot\frac{J_{12}}{J_{24}^2}\\
&=J_1 (1+\psi(q)) -q^{-1}\cdot\frac{J_1J_{24}J_8\overline{J}_{4,48}J_{12}}{J_{3,12}J_{2,12}\overline{J}_{0,48}}-q^{-5}\cdot \frac{J_1J_{24}J_8J_{12}\overline{J}_{28,48}}{\overline{J}_{0,48}J_{2,12}J_{3,12}}&(\text{by }(\ref{equation:1.10}))\\
&=J_1 (1+\psi(q)) -q^{-5}\cdot\frac{J_1J_{24}J_8J_{12}}{J_{3,12}J_{2,12}\overline{J}_{0,48}}\Big [ \overline{J}_{20,48}+q^4\overline{J}_{44,48}\Big ].
\end{align*}
Using (\ref{equation:jsplit}) with $m=2$ yields
\begin{align}
g_{3,5,3}(q^2,q^3,q,-1,-1)&=J_1 (1+\psi(q))-q^{-5}\cdot \frac{J_1J_{24}J_8J_{12}\overline{J}_{4,12}}{\overline{J}_{0,48}J_{2,12}J_{3,12}}.\label{equation:gpsi3-final}
 \end{align}
We compute $\theta_{3,2}(q^2,q^3,q)$.   Using Corollary \ref{corollary:cor-n3p2}, we have
 \begin{align}
 \theta_{3,2}(q^2,q^3,q)&=\frac{1}{2\overline{J}_{0,48}}\cdot \frac{q^{-3}j(q^{10};q^8)}{j(q^{14};q^{16})j(q^6;q^{16})}\cdot \frac{J_{16}^2}{J_8}\cdot \Big [ 0 - j(q^3;q^8)j(q^7;q^8)j(-q^3;q^3)\Big ]\notag\\
&=\frac{1}{2\overline{J}_{0,48}}\cdot \frac{q^{-5}J_{2,8}}{J_{2,16}J_{6,16}}\cdot \frac{J_{16}^2}{J_8}\cdot J_{3,8}J_{7,8}\overline{J}_{0,3}. \label{equation:thetapsi3-final}
 \end{align}
 Simplying with elementary theta function properties shows that (\ref{equation:thetapsi3-final}) is equal to the quotient of theta functions in (\ref{equation:gpsi3-final}), and the result follows.

We prove identity (\ref{equation:nu3(-q)-hecke}).  Focusing on the right hand side and replacing $q$ with $q^2$, we have
 \begin{align}
 \frac{1}{(q^2;q^2)_{\infty}}\sum_{n=0}^{\infty}(-1)^nq^{4n^2+4n}\sum_{j=-n}^nq^{-j^2-j}&=\frac{1}{2\cdot (q^2;q^2)_{\infty}}\Big ( \sum_{\substack{n+j\ge 0\\n-j\ge 0}}- \sum_{\substack{n+j< 0\\n-j< 0}} \Big )(-1)^nq^{4n^2+4n-j^2-j}\label{equation:alpha1}
 \end{align}
The quadratic part of the exponent in the Hecke-type sum factors.  Letting $j=2n-k,$ we can rewrite (\ref{equation:alpha1}) in terms of $k$ and $n$.  For fixed $k$, the sum over $n$ is a finite geometric series.  So (\ref{equation:alpha1}) is equivalent to
\begin{align}
\frac{1}{2\cdot (q^2;q^2)_{\infty}}\sum_{k=0}^{\infty}q^{-k^2+k}\cdot \frac{(-q^{4k+2})^{\lceil k/3 \rceil}-(-q^{4k+2})^{k+1}}{1+q^{4k+2}},\label{equation:alpha2}
\end{align}
where $\lceil \cdot \rceil$ is the ceiling function.  By standard series manipulations, we can rewrite (\ref{equation:alpha2}) as a sum of terms of the form $j*m$:
\begin{align*}
\frac{1}{2\cdot (q^2;q^2)_{\infty}}& \cdot \Big ( \overline{J}_{22,24}m(q^4,q^{24},-q^{22})-q^{-2}\overline{J}_{10,24}m(q^4,q^{24},-q^{10})
-\overline{J}_{6,24}m(q^{12},q^{24},-q^6)\\
&+\overline{J}_{6,24}m(q^{12},q^{24},-q^{18})-\overline{J}_{14,24}q^{-2}m(q^{4},q^{24},-q^{10})+\overline{J}_{2,24}m(q^{4},q^{24},-q^{22})\\
&+\overline{J}_{2,24}m(q^{4},q^{24},-q^{2})-\overline{J}_{10,24}q^{-2}m(q^{4},q^{24},-q^{14})+\overline{J}_{6,24}m(q^{12},q^{24},-q^{6})\\
&-\overline{J}_{14,24}q^{-2}m(q^{4},q^{24},-q^{14})+\overline{J}_{2,24}m(q^{4},q^{24},-q^{2})-\overline{J}_{6,24}m(q^{12},q^{24},-q^{-6})\Big ).
\end{align*}
This reduces to
\begin{align}
\frac{1}{J_2}\cdot \Big ( &\overline{J}_{22,24}m(q^4,q^{24},-q^{22})+\overline{J}_{2,24}m(q^{4},q^{24},-q^{2})-\overline{J}_{10,24}q^{-2}m(q^{4},q^{24},-q^{10})\label{equation:alpha3}\\
&-\overline{J}_{14,24}q^{-2}m(q^{4},q^{24},-q^{14})\Big ).\notag
\end{align}
Using identity (\ref{equation:m-change-z}), we have
{\allowdisplaybreaks \begin{align*}
m(q^4,q^{24},-q^{22})&=m(q^4,q^{24},q^6)+\frac{J_{24}^3\overline{J}_{16,24}\overline{J}_{8,24}}{\overline{J}_{2,24}^2J_{6,24}J_{10,24}},\\
m(q^4,q^{24},-q^{2})&=m(q^4,q^{24},q^6)+\frac{q^2J_{24}^3\overline{J}_{4,24}\overline{J}_{12,24}}{\overline{J}_{2,24}\overline{J}_{6,24}J_{6,24}J_{10,24}},\\
m(q^4,q^{24},-q^{10})&=m(q^4,q^{24},q^6)+\frac{q^6J_{24}^3\overline{J}_{4,24}\overline{J}_{20,24}}{\overline{J}_{10,24}^2J_{6,24}J_{10,24}},\\
m(q^4,q^{24},-q^{14})&=m(q^4,q^{24},q^6)+\frac{q^6J_{24}^3\overline{J}_{8,24}\overline{J}_{0,24}}{\overline{J}_{14,24}\overline{J}_{18,24}J_{6,24}J_{10,24}}.
\end{align*}}%
Using (\ref{equation:jsplit}), we have $J_2=\overline{J}_{10,24}-q^2\overline{J}_{22,24}.$  Thus (\ref{equation:alpha3}) is equivalent to
\begin{align}
-2q^{-2}m(q^4,q^{24},q^6)&+\frac{J_{24}^3}{J_2J_{6,24}J_{10,24}}\Big [ \frac{\overline{J}_{8,24}^2}{\overline{J}_{2,24}}-\frac{q^4\overline{J}_{4,24}^2}{\overline{J}_{10,24}}  \Big ]\notag \\
& +\frac{q^2J_{24}^3}{J_2J_{6,24}J_{10,24}}\Big [ \frac{\overline{J}_{4,24}\overline{J}_{12,24}}{\overline{J}_{6,24}}-\frac{q^2\overline{J}_{8,24}\overline{J}_{0,24}}{\overline{J}_{6,24}}  \Big ].\label{equation:alpha4}
\end{align}
Focusing on the second summand of (\ref{equation:alpha4}), we find that
\begin{align*}
\frac{J_{24}^3}{J_2J_{6,24}J_{10,24}}\Big [ \frac{\overline{J}_{8,24}^2}{\overline{J}_{2,24}}-\frac{q^4\overline{J}_{4,24}^2}{\overline{J}_{10,24}}  \Big ]&=\frac{J_{24}^3}{J_2J_{6,24}J_{10,24}}\cdot \frac{1}{\overline{J}_{2,24}\overline{J}_{14,24}}
\Big [ \overline{J}_{8,24}^2\overline{J}_{10,24}-q^4\overline{J}_{4,24}^2\overline{J}_{2,24}\Big ]\\
&=\frac{J_{24}^3}{J_2J_{6,24}J_{10,24}}\cdot \frac{1}{\overline{J}_{2,24}\overline{J}_{14,24}}
\cdot \frac{J_{10,24}^2J_{12,24}J_{4,24}}{\overline{J}_{6,24}}\\
&=\frac{J_{24}^3}{J_2J_{6,24}J_{10,24}}\cdot\frac{J_{10,24}^2J_{12}J_{2,12}}{\overline{J}_{6,24}J_{24}^2},
\end{align*}
where the second equality follows from Proposition \ref{proposition:CHcorollary} with $q\rightarrow q^{24}$, $a= q^{12}$, $b= q^2$, $c= -q^4$, $d= -q^8$, and the last equality follows from elementary theta function properties.  Focusing on the third summand of (\ref{equation:alpha4}), we obtain
\begin{align*}
\frac{q^2J_{24}^3}{J_2J_{6,24}J_{10,24}}\Big [ \frac{\overline{J}_{4,24}\overline{J}_{12,24}}{\overline{J}_{6,24}}-\frac{q^2\overline{J}_{8,24}\overline{J}_{0,24}}{\overline{J}_{6,24}}  \Big ]&=\frac{q^2J_{24}^3}{J_2J_{6,24}J_{10,24}\overline{J}_{6,24}}\cdot \Big [ \overline{J}_{4,24}\overline{J}_{12,24}-q^2\overline{J}_{8,24}\overline{J}_{0,24}\Big ]\\
&=\frac{q^2J_{24}^3J_{2,12}^2}{J_2J_{6,24}J_{10,24}\overline{J}_{6,24}},
\end{align*}
where the last line follows from (\ref{equation:H1Thm1.1}) with $q\rightarrow q^{12}$, $x= q^2$, $y= q^2$.  Assembling the pieces shows that (\ref{equation:alpha1}) is equivalent to
{\allowdisplaybreaks \begin{align*}
-2&q^{-2}m(q^4,q^{24},q^6)+\frac{J_{24}^3J_{2,12}}{J_2J_{6,24}J_{10,24}\overline{J}_{6,24}}\cdot \frac{J_{12}}{J_{24}^2}\cdot \Big [ q^2J_{2,12}\frac{J_{24}^2}{J_{12}}+J_{10,24}^2\Big ]\\
&=-2q^{-2}m(q^4,q^{24},q^6)+\frac{J_{24}^3J_{2,12}}{J_2J_{6,24}J_{10,24}\overline{J}_{6,24}}\cdot \frac{J_{12}}{J_{24}^2}\cdot \Big [ q^2J_{2,24}J_{14,24}+J_{10,24}^2\Big ]&(\text{by }(\ref{equation:1.10}))\\
&=-2q^{-2}m(q^4,q^{24},q^6)+\frac{J_{24}^3J_{2,12}}{J_2J_{6,24}J_{10,24}\overline{J}_{6,24}}\cdot \frac{J_{12}}{J_{24}^2}\cdot J_{10,24}\cdot \Big [ q^2J_{2,24}+J_{10,24}\Big ]\\
&=-2q^{-2}m(q^4,q^{24},q^6)+\frac{J_{24}^3J_{2,12}}{J_2J_{6,24}J_{10,24}\overline{J}_{6,24}}\cdot \frac{J_{12}}{J_{24}^2}\cdot J_{10,24}\cdot j(-q^2;-q^6).&(\text{by }(\ref{equation:jsplit}))
\end{align*}}
Elementary theta function properties shows that this is $\nu(-q^2).$

\section{Proof of Theorem \ref{theorem:newmockthetaid}}\label{section:proof-newmockthetaid}
We prove identity (\ref{equation:1.14mockthetaid}).  Focusing on the left hand side, we have
 \begin{align*}
 \sum_{n=0}^{\infty}\frac{q^{2n^2}}{(-q;q)_{2n}}&=\frac{1}{(q^2;q^2)_{\infty}}\sum_{n=0}^{\infty}q^{4n^2+n}(1-q^{6n+3})\sum_{j=-n}^n(-1)^jq^{-j^2}\\
 &=\frac{1}{ (q^2;q^2)_{\infty}}\Big ( \sum_{\substack{n+j\ge 0\\n-j\ge 0}}- \sum_{\substack{n+j< 0\\n-j< 0}} \Big )(-1)^jq^{4n^2+n- j^2}\\
&=\frac{1}{(q^2;q^2)_{\infty}}\Big (f_{3,5,3}(q^4,q^4,q^2)+q^5f_{3,5,3}(q^{12},q^{12},q^2)\Big )\\
&=\frac{1}{(q^2;q^2)_{\infty}}\Big (f_{3,5,3}(q^{5/4},-q^{5/4},-q^{1/2})\Big ). &(\text{by }(\ref{equation:fabc-mod2}))
 \end{align*}

\noindent We first compute $g_{3,5,3}(q^{5/4},-q^{5/4},-q^{1/2},-1,-1).$  Using Corollary \ref{corollary:cor-n3p2}, we have
\begin{align*}
g_{3,5,3}&(q^{5/4},-q^{5/4},-q^{1/2},-1,-1)=\Big [j(q^{5/4};-q^{3/2})+j(-q^{5/4};-q^{3/2})\Big ]m(-q^{11},q^{24},-1)\\
& \ \ \ \ +q^{5/4}\Big [j(-q^{15/4};-q^{3/2})-j(q^{15/4};-q^{3/2})\Big ]m(-q^{3},q^{24},-1)\\
& \ \ \ \ -q^{4}\Big [j(q^{25/4};-q^{3/2})+j(q^{25/4};-q^{3/2})\Big ]m(-q^{-5},q^{24},-1)\\
&=2J_2m(-q^{11},q^{24},-1)+2q^5J_{12,6}m(-q^{3},q^{24},-1)\\
& \ \ \ \ -2q^4J_{14,6}m(-q^{-5},q^{24},-1)\\
&=2J_2m(-q^{11},q^{24},-1)-2q^{-6}J_{2}m(-q^{-5},q^{24},-1),
\end{align*}
where the second equality follows from applying (\ref{equation:jsplit}) to each bracketed term, and the last equality follows from (\ref{equation:1.8}).  Using (\ref{equation:m-fnq-x-alt1}) and (\ref{equation:m-fnq-flip})
\begin{align*}
g_{3,5,3}&(q^{5/4},-q^{5/4},-q^{1/2},-1,-1)\\
&=2J_2(1+q^{-13}m(-q^{-13},q^{24},-1))+2q^{-1}J_{2}m(-q^{5},q^{24},-1)\\
&=2J_2 -2J_2m(-q^{13},q^{24},-1)+2q^{-1}J_{2}m(-q^{5},q^{24},-1).
\end{align*}
Using Proposition \ref{proposition:HM-ID4.5} with $q\rightarrow q^8$, $x= -q$, $z= q^{-3},$
\begin{align*}
-qg(-q,q^8)&=-m(-q^{13},q^{24},-1)+q^{-1}m(-q^{5},q^{24},-1)-\frac{qJ_{8}^2j(-q^{-2};q^8)j(q^{-3};q^{24})}{j(-q;q^8)j(q^{-3};q^8)j(-1;q^{24})}\\
&=-m(-q^{13},q^{24},-1)+q^{-1}m(-q^{5},q^{24},-1)-\frac{q^{-1}J_{8}^2\overline{J}_{2,8}J_{3,24}}{\overline{J}_{1,8}J_{3,8}\overline{J}_{0,24}}.
\end{align*}
So we have
\begin{equation}
g_{3,5,3}(q^{5/4},-q^{5/4},-q^{1/2},-1,-1)=2J_2-2J_2qg(-q,q^8)+\frac{2q^{-1}J_{8}^2J_2\overline{J}_{2,8}J_{3,24}}{\overline{J}_{1,8}J_{3,8}\overline{J}_{0,24}}.
\end{equation}
We compute $\theta_{3,2}(q^{5/4},-q^{5/4},-q^{1/2}).$  Using Corollary \ref{corollary:cor-n3p2}, we have
\begin{align*}
\theta_{3,2}(q^{5/4},-q^{5/4},-q^{1/2})&=\frac{q^{-3/2}}{2\overline{J}_{0,24}}\cdot \frac{j(q^5;q^4)\overline{J}_{4,16}}{j(q^5;q^8)^2}\\
& \ \ \ \  \cdot \Big [ j(q^{5/2};q^4)^2j(-q^{3/4};-q^{3/2})-j(-q^{5/2};q^{4})^2j(q^{3/4};-q^{3/2})\Big ]
\end{align*}
Using (\ref{equation:1.7}) and (\ref{equation:1.11}) yields
\begin{equation*}
j(-q^{3/4};-q^{3/2})=j(q^{3/4};-q^{3/2})=J_{3,6},
\end{equation*}
so we can write
\begin{align*}
\theta_{3,2}(q^{5/4},-q^{5/4},-q^{1/2})&=-\frac{q^{-5/2}}{2\overline{J}_{0,24}}\cdot\frac{J_{1,4}\overline{J}_{4,16}J_{3,6}}{j(q^5,q^8)^2}\cdot \Big [ j(q^{5/2};q^4)^2-j(-q^{5/2};q^{4})^2\Big ]\\
&=-\frac{q^{-5/2}}{2\overline{J}_{0,24}}\cdot\frac{J_{1,4}\overline{J}_{4,16}J_{3,6}}{j(q^5;q^8)^2}\\
& \ \ \ \ \cdot\Big ( j(q^{5/2};q^4)+j(-q^{5/2};q^{4})\Big )\cdot \Big ( j(q^{5/2};q^4)-j(-q^{5/2};q^{4})\Big )\\
&=\frac{q^{-5/2}}{2\overline{J}_{0,24}}\cdot\frac{J_{1,4}\overline{J}_{4,16}J_{3,6}}{j(q^5;q^8)^2}\cdot 2 j(-q^9;q^{16})\cdot 2q^{5/2}j(-q^{17};q^{16}),
\end{align*}
where the last line follows from applying (\ref{equation:jsplit}) with $m=2$ to each expression in parentheses.  Simplifying with elementary theta function properties gives
\begin{equation}
\theta_{3,2}(q^{5/4},-q^{5/4},-q^{1/2})=\frac{2q^{-1}J_{3,6}J_{16}J_8J_{2,16}}{\overline{J}_{0,24}J_{5,8}}.
\end{equation}
So proving (\ref{equation:1.14mockthetaid}) is equivalent to showing
\begin{equation}
\frac{2q^{-1}J_{8}^2\overline{J}_{2,8}J_{3,24}}{\overline{J}_{1,8}J_{3,8}\overline{J}_{0,24}}-\frac{2q^{-1}J_{3,6}J_{16}J_8J_{2,16}}{\overline{J}_{0,24}J_{5,8}J_2}=-\frac{J_{1,2}\overline{J}_{3,8}}{J_2}\label{equation:1.14final}.
\end{equation}
But elementary theta function properties shows that (\ref{equation:1.14final}) is equivalent to identity (\ref{equation:preidA}) of Proposition~\ref{lemma:newids}.

The proof of (\ref{equation:1.15mockthetaid}) is similar.  Here we find that
 \begin{align*}
  \sum_{n=0}^{\infty}\frac{q^{2n^2+2n}}{(-q;q)_{2n+1}}
 &=\frac{1}{ (q^2;q^2)_{\infty}}\Big ( \sum_{\substack{n+j\ge 0\\n-j\ge 0}}- \sum_{\substack{n+j< 0\\n-j< 0}} \Big )(-1)^jq^{4n^2+3n- j^2}\\
&=\frac{1}{J_2}\cdot f_{3,5,3}(q^{9/4},-q^{9/4},-q^{1/2}).
 \end{align*}
Using Corollary \ref{corollary:cor-n3p2} and arguing as above reduces proving (\ref{equation:1.15mockthetaid}) to identity (\ref{equation:preidB}) of Proposition~\ref{lemma:newids}.



\section{Proof of Theorem \ref{theorem:four-new-heckes}}\label{section:proofs-four-new-heckes}
Specializing Theorem \ref{theorem:main-acdivb}, we have that
\begin{align}
f_{4,4,1}(x,y,q)&=h_{4,4,1}(x,y,q,-1,-1)\notag\\
&\ \ \ \ \ -\sum_{d=0}^3\frac{q^{3\binom{d+1}{2}}j(q^{3+3d}y;q^4)j(-q^{9-3d}x/y;q^{12})J_{12}^3j(-q^{9+3d}/y^3;q^{12})}{\overline{J}_{0,3}\overline{J}_{0,12}j(-q^{6}x/y^4;q^{12})j(q^{3+3d}y/x;q^{12})},\label{equation:f441}
\end{align}
where
\begin{equation}
h_{4,4,1}(x,y,q,-1,-1)=j(x;q^4)m\big ( -q^3y/x,q^3,-1 \big )+j(y;q) m\big ( q^6 x/y, q^{12}, -1 \big ).\label{equation:h441}
\end{equation}
We prove identity (\ref{equation:hecke-phi}).   We first define
\begin{equation}
\sg(r,s):=\big (\sg(r)+\sg(s) \big)/2.
\end{equation}
We start with the right-hand side of (\ref{equation:hecke-phi}):
\begin{align*}
\sum_{n= 0}^{\infty}&(-1)^{n}q^{2n^2+n}(1+q^{2n+1})\sum_{j=-n}^n(-1)^jq^{-3j^2/2+j/2}\\
&=\sum_{\substack{n\ge 0 \\ -n\le j \le n}}(-1)^{n+j}q^{2n^2+n-3j^2/2+j/2}+q\sum_{\substack{n\ge 0 \\ -n\le j \le n}}(-1)^{n+j}q^{2n^2+3n-3j^2/2+j/2}\\
&=\sum_{n,j} {\sg}(j,n-j)(-1)^{n+j}q^{2n^2+n-3j^2/2+j/2}+q\sum_{n,j} {\sg}(j,n-j)(-1)^{n+j}q^{2n^2+3n-3j^2/2+j/2}\\
&=f_{4,4,1}(q^3,-q^2,q)+qf_{4,4,1}(q^5,-q^4,q),
\end{align*}
where the last line follows from the substitutions $u=j$, $v=n-j$.   

We first consider the Appell-Lerch sum expresssion.  Using (\ref{equation:h441}), we have
\begin{align}
h_{4,4,1}(q^3,-q^2,q)&+qh_{4,4,1}(q^5,-q^4,q)=j(q^3;q^4)m\big ( q^2,q^3,-1\big ) + j(-q^2;q) m\big ( q,q^{12},-1 \big )\notag\\
&+qj(q^5;q^4)m\big ( q^2,q^3,-1\big ) + qj(-q^4;q) m\big ( q^{-5},q^{12},-1 \big )\notag\\
=&2\overline{J}_{1,4}m\big ( q^5,q^{12},-1\big )+2q^{-1}\overline{J}_{1,4}m\big ( q,q^{12},-1\big ),
\end{align}
where the last line follows from (\ref{equation:1.8}) and (\ref{equation:m-fnq-flip}).  Thus
\begin{align}
f_{4,4,1}(q^3,-q^2,q)+qf_{4,4,1}&(q^5,-q^4,q)=2\overline{J}_{1,4}m\big ( q^5,q^{12},-1\big )+2q^{-1}\overline{J}_{1,4}m\big ( q,q^{12},-1\big )\notag\\
&-\sum_{d=0}^3\frac{q^{3\binom{d+1}{2}}j(-q^{5+3d};q^4)j(q^{10-3d};q^{12})J_{12}^3j(q^{3+3d};q^{12})}{\overline{J}_{0,3}\overline{J}_{0,12}j(-q;q^{12})j(-q^{2+3d};q^{12})}\notag\\
&-q\sum_{d=0}^3\frac{q^{3\binom{d+1}{2}}j(-q^{7+3d};q^4)j(q^{10-3d};q^{12})J_{12}^3j(q^{-3+3d};q^{12})}{\overline{J}_{0,3}\overline{J}_{0,12}j(-q^{-5};q^{12})j(-q^{2+3d};q^{12})}.\label{equation:hecke-phi-A}
\end{align}
Using (\ref{equation:m-change-z}) and grouping terms, we can rewrite (\ref{equation:hecke-phi-A}) as
\begin{align}
f_{4,4,1}&(q^3,-q^2,q)+qf_{4,4,1}(q^5,-q^4,q)\notag\\
&=\overline{J}_{1,4}m\big ( q^5,q^{12},q^4\big )+\overline{J}_{1,4}m\big ( q^5,q^{12},q^8\big )+q^{-1}\overline{J}_{1,4}m\big ( q,q^{12},q^4\big )+q^{-1}\overline{J}_{1,4}m\big ( q,q^{12},q^8\big )\notag\\
&\ \ \ \ \ +\frac{\overline{J}_{1,4}J_{12}^3\overline{J}_{4,12}}{\overline{J}_{0,12}\overline{J}_{5,12}J_{4,12}}\Big [ \frac{\overline{J}_{9,12}}{J_{9,12}}-\frac{\overline{J}_{1,12} } { J_{1,12} } \Big ]+q^{-1}\frac{\overline{J}_{1,4}J_{12}^3\overline{J}_{4,12}}{\overline{J}_{0,12}\overline{J}_{1,12}J_{4,12}}\Big [ \frac{\overline{J}_{5,12}}{J_{5,12}}+\frac{\overline{J}_{9,12} } { J_{9,12} } \Big ]\notag\\
& \ \ \ \ \  - \frac{1}{\overline{J}_{0,3}\overline{J}_{0,12}}\Big ( q^{-1}\frac{\overline{J}_{1,4}J_{12}^3J_{3,12}}{\overline{J}_{1,12}}\Big [ \frac{J_{2,12}}{\overline{J}_{2,12}}+\frac{J_{4,12}}{\overline{J}_{4,12}}\Big ]
+\frac{\overline{J}_{1,4}J_{12}^3J_{3,12}}{\overline{J}_{5,12}}\Big [ \frac{J_{4,12}}{\overline{J}_{4,12}}-\frac{J_{2,12}}{\overline{J}_{2,12}}\Big ]\notag\\
& \ \ \ \ \ \ \ \ \ \  +\frac{\overline{J}_{0,4}J_{12}^3J_{6,12}}{\overline{J}_{1,12}\overline{J}_{5,12}}\Big [ J_{1,12}+q^{-1}J_{7,12}\Big ]\Big ).\label{equation:hecke-phi-B}
\end{align}
With (\ref{equation:3rdphi}) in mind, to prove (\ref{equation:hecke-phi}) it remains to show that the sum of quotients of theta functions in (\ref{equation:hecke-phi-B}) is zero.  Using identities (\ref{equation:H1Thm1.2A}), (\ref{equation:H1Thm1.2B}), and (\ref{equation:jsplit}), the bracketed expressions can be evaluated and the terms can then be rearranged to show that the sum of quotients of theta functions is
\begin{align}
2q^{-1}&\frac{\overline{J}_{1,4}J_{12}^3\overline{J}_{4,12}J_{8,24}}{\overline{J}_{0,12}J_{3,12}J_{4,12}}\cdot \Big [ \frac{J_{14,24}}{\overline{J}_{1,12}J_{5,12}}-q^2\frac{J_{2,24}}{\overline{J}_{5,12}J_{1,12}}\Big ]\notag\\
& \ \  \ \ \ - 2q^{-1}\frac{\overline{J}_{1,4}J_{12}^3J_{3,12}J_{6,24}}{\overline{J}_{0,3}\overline{J}_{0,12}\overline{J}_{2,12}\overline{J}_{4,12}} \cdot \Big [ \frac{J_{14,24}}{\overline{J}_{1,12}}+q^3\frac{J_{2,24}}{\overline{J}_{5,12}}\Big ]-  \frac{\overline{J}_{0,4}J_{12}^3J_{6,12}}{\overline{J}_{0,3}\overline{J}_{0,12}\overline{J}_{1,12}\overline{J}_{5,12}}\cdot j(-q;-q^3)\notag\\
&=2q^{-1}\frac{\overline{J}_{1,4}J_{12}^3\overline{J}_{4,12}J_{8,24}}{\overline{J}_{0,12}J_{3,12}J_{4,12}}\cdot \frac{J_{24}}{J_{12}^2}\cdot\frac{J_{1,3}\overline{J}_{1,3}}{\overline{J}_{1,12}\overline{J}_{5,12}}\notag\\
& \ \  \ \ \ - 2q^{-1}\frac{\overline{J}_{1,4}J_{12}^3J_{3,12}J_{6,24}}{\overline{J}_{0,3}\overline{J}_{0,12}\overline{J}_{2,12}\overline{J}_{4,12}} \cdot \frac{1}{\overline{J}_{1,12}\overline{J}_{5,12}}\cdot \frac{\overline{J}_{3,12}J_{8}\overline{J}_{4,12}}{\overline{J}_{6,24}}\notag \\
&\ \ \ \ \ -  \frac{\overline{J}_{0,4}J_{12}^3J_{6,12}}{\overline{J}_{0,3}\overline{J}_{0,12}\overline{J}_{1,12}\overline{J}_{5,12}}\cdot j(-q;-q^3)
\label{equation:hecke-phi-C},
\end{align}
where the first bracketed expression was evaluated with (\ref{equation:1.12}) and (\ref{equation:jsplit}), and the second bracketed expression was evaluated with Proposition \ref{proposition:hecke-phi-id} with $q\rightarrow -q^{3}$ and $x\rightarrow -q^{-1}$.  Using a straightforward but lengthly argument with (\ref{equation:1.12}) and (\ref{equation:1.11}), showing that the right-hand side of (\ref{equation:hecke-phi-C}) is zero is equivalent to showing
\begin{equation}
2J_{12}^2J_{24}^2-J_{6,24}\overline{J}_{6,24}J_{12}J_{24}-J_{6,24}^2\overline{J}_{6,24}^2=0,
\end{equation}
which is straightforward.

We prove identity (\ref{equation:hecke-nu}).   We start with the right-hand side of (\ref{equation:hecke-nu}):
{\allowdisplaybreaks \begin{align*}
\sum_{n= 0}^{\infty}(-1)^nq^{2n^2+2n}\sum_{j=-n}^n(-1)^jq^{-3j^2/2+j/2}&=\sum_{\substack{n\ge 0 \\ -n\le j \le n}}(-1)^{n+j}q^{2n^2+2n-3j^2/2+j/2}\\
&=\sum_{n,j}\sg(j,n-j)(-1)^{n+j}q^{2n^2+2n-3j^2/2+j/2}\\
&=f_{4,4,1}(q^4,-q^3,q),
\end{align*}}%
where the last line follows from the substitutions $u=j$, $v=n-j$.  We first consider the Appell-Lerch sum expression.  Using (\ref{equation:h441}) we have
\begin{equation}
h_{4,4,1}(x,y,q)=j(q^4;q^4)m\big ( q^2,q^3,-1\big ) +j(-q^3;q)m\big( q^{-2},q^{12},-1\big )=2q^{-1}\overline{J}_{1,4}m\big (q^2,q^{12}-1\big ).
\end{equation}
Thus
\begin{align}
f_{4,4,1}(q^4,-q^3,q)&=2q^{-1}m\big (q^2,q^{12}-1\big )\notag\\
&\ \ \ \ \ - \sum_{d=0}^3\frac{q^{3\binom{d+1}{2}}j(-q^{6+3d};q^4)j(q^{10-3d};q^{12})J_{12}^3j(q^{3d};q^{12})}{\overline{J}_{0,3}\overline{J}_{0,12}j(-q^{-2};q^{12})j(-q^{2+3d};q^{12})}\label{equation:hecke-nu-A}
\end{align}
Using (\ref{equation:m-change-z}) and grouping terms, we can rewrite (\ref{equation:hecke-nu-A}) as
{\allowdisplaybreaks \begin{align}
f_{4,4,1}&(q^4,-q^3,q)=q^{-1}m\big ( q^2, q^{12},-q^3\big )+q^{-1}m\big( q^2,q^{12},-q^9\big )\notag\\
&\ \ \ \ \ +q^{-1}\frac{\overline{J}_{1,4}J_{12}^3J_{3,12}}{\overline{J}_{0,12}\overline{J}_{2,12}\overline{J}_{3,12}}\cdot \Big [ \frac{J_{5,12}}{\overline{J}_{5,12}}+\frac{J_{11,12}}{\overline{J}_{11,12}}\Big ]\notag\\
& \ \ \ \ \ -\frac{1}{\overline{J}_{0,3}\overline{J}_{0,12}} \cdot \Big ( q^{-1}\frac{\overline{J}_{1,4}J_{12}^3J_{3,12}}{\overline{J}_{2,12}} \cdot \Big [ \frac{J_{5,12}}{\overline{J}_{5,12}}+\frac{J_{11,12}}{\overline{J}_{11,12}}\Big ]+q^{-1}\frac{\overline{J}_{0,4}J_{4,12}J_{12}^3J_{6,12}}{\overline{J}_{2,12}\overline{J}_{8,12}}\Big )\label{equation:hecke-nu-B}
\end{align}}%
With (\ref{equation:3rdnu}) in mind, to prove (\ref{equation:hecke-nu}) it remains to show that the sum of quotients of theta functions in (\ref{equation:hecke-nu-B}) is zero.  Using identity (\ref{equation:H1Thm1.2B}), the bracketed expression can be evaluated and the terms can then be rearranged to show that the sum of quotients of theta functions in (\ref{equation:hecke-nu-B}) is
\begin{align}
&q^{-1}\frac{\overline{J}_{1,4}J_{12}^3J_{3,12}}{\overline{J}_{0,12}\overline{J}_{2,12}\overline{J}_{3,12}}\cdot \frac{2J_{16,24}J_{18,24}}{\overline{J}_{5,12}\overline{J}_{11,12}}\notag\\
& \ \ \ \ \ -\frac{1}{\overline{J}_{0,3}\overline{J}_{0,12}} \cdot \Big ( q^{-1}\frac{\overline{J}_{1,4}J_{12}^3J_{3,12}}{\overline{J}_{2,12}} \cdot \frac{2J_{16,24}J_{18,24}}{\overline{J}_{5,12}\overline{J}_{1,12}}+q^{-1}\frac{\overline{J}_{0,4}J_{4,12}J_{12}^3J_{6,12}}{\overline{J}_{2,12}\overline{J}_{8,12}}\Big ).
\label{equation:hecke-nu-C}
\end{align}
Using the identity $\overline{J}_{3,12}=2\overline{J}_{0,3},$ and elementary theta function properties, it is straightforward to verify that the right-hand side of (\ref{equation:hecke-nu-C}) is equal to zero.

We prove identity (\ref{equation:hecke-phibar0}).  Here we use the Bailey pair
\begin{equation}
B_n'(0,q):=1, \ \ \ \ \ A_n'(q^2,0,q):=\frac{q^{2n^2+3n}(1-q^{2n+2})}{(1-q)(1-q^2)}\sum_{j=-n-1}^n(-1)^jq^{-j(3j+5)/2}
\end{equation}
from Theorem $4$ and Lemma $6$ of \cite{A1}.  Bailey's lemma \cite[$(2.4)$]{A1} with $q\rightarrow q^2$, $\rho_1=-q^3$, $\rho_2=-q^2$, and $a=q^4$, then gives 
{\allowdisplaybreaks \begin{align}
J_{1,2}\overline{\phi_0}(q)&=\sum_{n=0}^{\infty}q^{4n^2+7n}(1-q^{2n+2})\sum_{j=-n-1}^n(-1)^jq^{-3j^2-5j}\notag\\
&=\Big ( \sum_{\substack{r,s\ge 0\\r\not\equiv s \pmod 2}}-\sum_{\substack{r,s< 0\\r\not\equiv s \pmod 2}}\Big )(-1)^{(r-s-1)/2}q^{(r+s+1)^2/4+(3r+2)s-1},\label{equation:BHL-id}
\end{align}}%
which is what one finds in \cite[$(2.7)$]{BHL}.  Replacing $(r,s)$ with $(2R+1,2S)$ and $(2R,2S+1)$, we obtain
\begin{align}
J_{1,2}\overline{\phi_0}(q)&=f_{1,7,1}(q^3,q^{13},q^2)-q^2f_{1,7,1}(q^9,q^7,q^2)\notag\\
&=-q^2f_{1,7,1}(q^{15},q^5,q^2)+q^4f_{1,7,1}(q^{9},q^{11},q^2)&{\text{(by (\ref{equation:fabcflip}))}}\notag\\
&=q^{-1}f_{1,7,1}(q,q^3,q^2)-q^{-1}J_{1,2}+q^4f_{1,7,1}(q^{9},q^{11},q^2)&{\text{(by (\ref{equation:fspec-1}))}}\notag\\
&=q^{-1}\sum_{n=0}^{\infty}q^{4n^2+n}(1-q^{6n+3})\sum_{j=-n}^n(-1)^jq^{-3j^2-j}-q^{-1}J_{1,2}.\notag
\end{align}

We prove identity (\ref{equation:hecke-phibar1}). Here we use the Bailey pair
\begin{equation}
B_n'(0,q):=1, \ \ \ \ \ A_n'(q,0,q):=\frac{q^{2n^2+n}(1-q^{2n+1})}{1-q}\sum_{j=-n}^n(-1)^jq^{-j(3j+1)/2}
\end{equation}
from Theorem $4$ and Lemma $7$ of \cite{A1}.  Bailey's lemma \cite[$(2.4)$]{A1} with $q\rightarrow q^2$, $\rho_1=-q$, $\rho_2=-q^2$, and $a=q^2$, then gives the result.

\section{Proof of Theorem \ref{theorem:BHLids}}\label{section:proof-BHLids}

We prove (\ref{equation:BHLphibar0}); the proof of (\ref{equation:BHLphibar1}) is similar.  We have
\begin{align*}
2q^2\overline{\phi}_0(q^2)&=\psi(q)+\psi(-q)&(\text{by (\ref{equation:BHLpsieven})})\\
&=qg(q,q^4)-qg(-q,q^4) &(\text{by (\ref{equation:3rdpsi})})\\
&=-2+2q^2g(-q^2,q^{16})+2\cdot \frac{J_8\overline{J}_{4,16}^2}{J_{2,8}\overline{J}_{14,16}}. &(\text{by (\ref{equation:rootsof1n2k1})})
\end{align*}
Elementary theta function properties gives the result.

\section*{Acknowledgements}

We would like to thank Dean Hickerson and the referee for helpful comments and suggestions.

\end{document}